\pgfplotsset{compat=1.13}
\newtheorem{teo}{Theorem}[section]
\newtheorem{defi}[teo]{Definition}
\newtheorem{prop}[teo]{Proposition}
\newtheorem{lem}[teo]{Lemma}
\newtheorem{cor}[teo]{Corollary}
\theoremstyle{definition}
\newtheorem{oss}[teo]{Remark}
\theoremstyle{definition}
\newtheorem{ex}[teo]{Example}
\newcommand{\bs}{\boldsymbol}
\newcommand{\al}{\boldsymbol{\alpha}}
\newcommand{\be}{\boldsymbol{\beta}}
\newcommand{\de}{\boldsymbol{\delta}}
\newcommand{\e}{\boldsymbol{e}}
\newcommand{\om}{\boldsymbol{\omega}}
\newcommand{\g}{\boldsymbol{\gamma}}
\newcommand{\te}{\boldsymbol{\theta}}
\newcommand{\he}{\boldsymbol{\eta}}
\newcommand{\ga}{\boldsymbol{\gamma}}
\newcommand{\eps}{\boldsymbol{\epsilon}}
\newcommand{\Ap}{\mathrm{\bf Ap}}
\newcommand{\wt}{\hspace{0.1cm}\widetilde{\wedge}\hspace{0.1cm}}
\newcommand{\wh}[1]{\widehat{#1}}
\newcommand{\bei}[1]{\bs{\beta^{(#1)}}}
\newcommand{\td}[2]{\widetilde\Delta_{#1}^S(#2)}
\newcommand{\ds}[2]{\Delta_{#1}^S(#2)}
\newcommand{\dE}[2]{\Delta_{#1}^E(#2)}
\newcommand{\tdE}[2]{\widetilde\Delta_{#1}^E(#2)}
\newcommand{\dN}[2]{\Delta_{#1}(#2)}
\newcommand{\tdN}[2]{\widetilde\Delta_{#1}(#2)}
\newcommand{\N}{\mathbb{N}}
\vfill\begin{center}%
\newcommand{\keywords}[1]{\emph{Keywords:} #1}
\newcommand{\MSC}[1]{\emph{Mathematics Subject Classification 2010:} #1}
\newcounter{lastnote}
\title{Partition of complement of good ideals and Ap\'{e}ry sets} 
\author{L. Guerrieri \thanks{Jagiellonian University, Instytut Matematyki, 30-348 Krak\'{o}w \emph{e-mail: guelor@guelan.com}}, N. Maugeri \thanks{Università degli studi di Catania, Dipartimento di Matematica e Informatica, Catania \emph{e-mail: nicola.maugeri.1992@gmail.com}}, V. Micale \thanks{Università degli studi di Catania, Dipartimento di Matematica e Informatica, Catania \emph{e-mail: vmicale@dmi.unict.it}
%\newline 
}
}
\date{}
\begin{document}
\linespread{1,0}
\maketitle
\begin{abstract}
\noindent Good semigroups form a class of submonoids of $\N^d$ containing the value semigroups of curve singularities. In this article, we describe a partition of the complements of good semigroup ideals, having as main application the description of the Ap\'{e}ry sets of good semigroups.
This generalizes to any $d \geq 2$ the results of \cite{DAGuMi}, which are proved in the case $d=2$ and only for the standard Ap\'{e}ry set with respect to the smallest nonzero element. Several new results describing good semigroups in $\N^d$ are also provided.
\end{abstract}
\keywords{Good semigroups, Ap\'{e}ry set}\\
\MSC{20M10, 20M14, 20M25.}

\section*{Introduction}

In \cite{anal:unr} the notion of \emph{good semigroup} was formally defined in order to study value semigroups of Noetherian analytically unramified one-dimensional semilocal reduced rings, e.g. the local rings arising from curve singularities (and from their blowups), possibly with more than one branch. By the truth, properties of these semigroups were already considered in %\cite{two:danna}\textcolor{blue}{non c'entra perchè è un lavoro fatto specificamente a due rami sugli anelli e cose abbastanza diverse}, \cite{gener:cdgz}\textcolor{blue}{come quello di prima}, 
\cite{symm:cdk}, \cite{canonical:danna}, \cite{multibranch:delgado}, \cite{symm:delgado},  %\cite{garcia1982semigroups}\textcolor{blue}{come quello di prima}, 
but their structure was systematically studied in \cite{anal:unr}. In the one branch case, value semigroups are numerical semigroups, and their theory has been widely studied.

Also in the case with more branches, in which value semigroups are submonoids of $\N^d$, the properties of the correspondent rings can be translated and studied at
semigroup level. For example, the well-known result by Kunz (see \cite{kunz}) that a one-dimensional analytically irreducible local domain is Gorenstein if and only 
if its value semigroup is symmetric can be generalized to analytically unramified rings (see \cite{symm:cdk} and also \cite{symm:delgado}). %\textcolor{blue}{ in the same way the numerical characterization of the canonical module in the analytically irreducible case (see \cite{jager}) can be given also in the more general case (see \cite{canonical:danna}).}

However, good semigroups present some problems that make difficult their study; first of all they are not finitely generated as monoid (even if they
can be completely determined by a finite set of elements (see \cite{garcia1982semigroups}, \cite{Carvalho:Semiring} and \cite{good:danna})) and they are not closed under finite intersections. 
Secondly, their good ideals (i.e. ideals of good semigroups satisfying the same axiomatic definition) do not behave well under sums and differences in the sense that a sum or a difference of two of them does not necessarily produce a good ideal (see e.g. \cite{anal:unr} and \cite{KST}).  %(e.g. the idealsarising as values of ideals of the corresponding ring) 
%is not good at all, in the sense that the class of good ideals is not closed under sums and differences (see e.g. \cite{anal:unr} and \cite{KST}).

%Hence, unlike what happens for numerical semigroups (in analogy to analytically irreducible domains), it is not clear how to define the concept of complete intersection 
%good semigroups and also the concepts of embedding dimension and type for these semigroups.

Moreover, the class of good semigroups is larger than the class of value semigroups (see \cite{anal:unr}) and no characterization of value good semigroups is known, unlike the numerical semigroup case in which any such semigroup is the value 
semigroup of the ring of the corresponding monomial curve.
Unfortunately, a consequence of this fact is that, in order to prove some property for a good semigroup, it is not in general possible to take advantage of the 
nature of value semigroups of rings. For this reason, good semigroups with two branches have also been recently studied as a natural generalization of numerical semigroups, using only semigroup techniques, without necessarily referring to the ring context (see \cite{DAGuMi},\cite{emb-NG},\cite{type:good},\cite{}).
%\textcolor{blue}{In \cite{DAGuMi} has been showed that, even if we work with infinite sets, sometimes it is possible to produce partitions of them in a finite number of subsets, called levels, and the number and the nature of the levels give key information on the semigroup we are dealing with.}
More precisely, working in the case of good subsemigroups of $\mathbb N^2$, in \cite{DAGuMi},
the authors study the notion of \emph{Ap\'ery set}, which is in this case infinite, but they show that it has a natural partition in a finite number of sets, called levels, and the number of these sets is
$e=e_1+e_2$
where $(e_1,e_2)$ is the minimal nonzero element in $S$. In case $S$ is a value semigroup of a ring, this number is
equal to the multiplicity of the corresponding ring agreeing with the analogous well-known result in the numerical case. 
Furthermore, for a symmetric good semigroup in $\mathbb N^2$, this partition satisfies a duality property similar to the duality that holds for the Ap\'ery set in the numerical case. 
The partition in levels of the Ap\'ery set seems to be very useful also to study other properties of good semigroups. For example, in \cite{emb-NG} the authors define the concept of embedding dimension for a good semigroup and use the levels of the Ap\'ery set to prove that the embedding dimension is bounded above by the multiplicity (as it happens for numerical semigroups and for one-dimensional rings). 

Finally, in \cite{type:good} the authors define the same partition in levels for complements of proper good ideals (hence also for Ap\'ery sets with respect to any element), in order to define and study the type of a good semigroup and to study almost symmetric good semigroups and the symmetry conditions on their Ap\'ery sets.
Following this approach, our purpose is to generalize to semigroups contained in $\N^d$ some of the notions and properties defined in the case $d=2$. In particular the main aim of this paper is to generalize the partition in levels for complements of good ideals and to show that, %this partition can be defined for any subset of $\mathbb N^d$ whose complement is a proper good ideal $E$ and, 
in the case of a principal good ideal $E$ generated by an element $\om$ (i.e. the Ap\'ery set with respect to $\om$ is the complement of $E$), the number of levels is exactly the sum of the components of $\om$ (see, Theorem \ref{main}). The proof of Theorem \ref{main}  gives also an alternative proof for \cite[Theorem 3]{DAGuMi} in the case $d=2$. We do not use an extension of the same argument of \cite[Theorem 3]{DAGuMi}, since for $d > 3$ that would require too many technicalities and, instead, we develop a new method which provides also more information about the structure of a good semigroup and gives a consistent generalization of \cite[Theorem 5]{DAGuMi}.

The structure of this paper is the following. In Section 1, we recall the preliminary definitions and we prove several general results needed for the rest of the paper. In particular, we introduce the concept of \it complete infimum \rm and we study the relation that this has with Property (G2) of good ideals (see Definition \ref{completeinfimum} and Proposition \ref{propG2}). This relation will be essential in the next section. %Furthermore, we generalize to $\mathbb N^d$ the well-known fact that in a good semigroup $S \subseteq \N^2$, given an element $\al \in S$, a sufficient condition for having $\Delta^S(\al)=\emptyset$ is to have $\ds{i}{\al}=\emptyset$ for some $i \in \{ 1,2\}$ (see Proposition \ref{numberofemptysepace} and the definitions at the beginning of Section 1). 

In Section 2, we introduce the previously mentioned partition in levels of complement of good ideals according to what has been done in \cite{DAGuMi} and \cite{type:good} in case $d=2$ (a similar partition was firstly defined in \cite{Apery:danna} for the Ap\'{e}ry set of the value semigroup of a plane curve). This will be defined using a partial order relation and the notion of complete infimum. Finally, in order to compute the number of levels, we generalize \cite[Lemma 2.3]{type:good} to $\mathbb N^d$ (see, Theorems \ref{bianchi} and \ref{neri}).

In Section 3, we define the subspaces of a good semigroup, whose name arises from the fact that they geometrically represent discrete subspaces contained in the good semigroup. For a value semigroup of an analytically irreducible ring, this definition has a precise correspondence in ring theory which is pointed up in \cite[Corollary 1.6]{canonical:danna}. Identifying a subspaces with a particular representative element, we generalize to the set of subspaces many properties proved in the two preceding sections. 
 
Finally, in Section 4, we prove Theorem \ref{main} computing the number of levels of a the complement of a good ideal in the case of principal good ideals. %In Theorem \ref{main} we extend to the $d$-branches case both \cite[Theorem 3]{DAGuMi} and \cite[Theorem 5]{DAGuMi}. In particular we prove that the number of levels of the partition of $\Ap(S, \om)$ is equal to the sum of the components of $\om$. This provides an alternative proof of \cite[Theorem 3]{DAGuMi} in the case $d=2$. 

\section{Good Semigroups with $d$ branches}

Let $\mathbb N$ be the set of nonnegative integers. As usual, $\le$ stands for the natural partial ordering in $\mathbb N^d$: set $\al=(\alpha_1, \alpha_2,\ldots,\alpha_d), \be=(\beta_1, \beta_2,\ldots,\beta_d)$, then $\boldsymbol{\alpha}\le \boldsymbol{\beta}$ if $\alpha_i\le \beta_i$ for all $i\in \{1,\ldots,d\}$. Trough this paper, if not differently specified, when referring to minimal or maximal elements of a subset of $\mathbb N^d$, we refer to minimal or maximal elements with respect to $\le$.
Given $\boldsymbol{\alpha},\boldsymbol{\beta}\in \mathbb N^2$, the infimum of the set $\{\boldsymbol{\alpha},\boldsymbol{\beta}\}$ (with respect to $\le$) will be denoted by
\[\boldsymbol{\alpha}\wedge \boldsymbol{\beta}=(\min(\alpha_1,\beta_1),\min(\alpha_2,\beta_2),\ldots,\min(\alpha_d,\beta_d)).\]

Let $S$ be a submonoid of $(\mathbb N^d,+)$. We say that $S$ is a \emph{good semigroup} if

\begin{itemize}
	\item[(G1)] For every $\boldsymbol{\alpha},\boldsymbol{\beta}\in S$, $\boldsymbol{\alpha}\wedge \boldsymbol{\beta}\in S$;
	\item[(G2)] Given two elements $\boldsymbol{\alpha},\boldsymbol{\beta}\in S$ such that $\al\neq \be$ and $\alpha_i=\beta_i$ for some $i\in\{1,\ldots,d\}$, then there exists $\bs{\epsilon}\in S$ such that $\epsilon_i>\alpha_i=\beta_i$ and $\epsilon_j\geq \min\{\alpha_j,\beta_j\}$ for each $j\neq i$ (and if $\alpha_j\neq \beta_j$ the equality holds).
	\item[(G3)] There exists an element $\boldsymbol{c}\in S$ such that $\boldsymbol{c}+\mathbb N^d\subseteq S$.
\end{itemize}

Sometimes, inspired by the notion of value semigroups of algebroid curves with $d$ branches, we will say that $S$ has $d$ branches if it a submonoid of $\mathbb N^d$. \\
A good subsemigroup is said to be \emph{local} if $\boldsymbol{0}=(0,\ldots,0)$ is its
only element with a zero component. In the following, if not otherwise specified, all the results will be proved for any good semigroup, independently whether local or not. %we will usually work with local good semigroups 
%and we will omit the word local.

Notice that, from condition (G1), if $\boldsymbol{c}$ and $\boldsymbol{d}$ fulfill (G3), then so does $\boldsymbol{c}\wedge \boldsymbol{d}$. So there exists a minimum $\boldsymbol{c}\in \mathbb N^2$ for which condition (G3) holds. Therefore we will say that
\[
\boldsymbol{c}:=\min\{\boldsymbol{\alpha}\in \mathbb Z^d\mid \boldsymbol{\alpha}+\mathbb N^d\subseteq S\}
\]
is the \emph{conductor} of $S$.
We denote $\boldsymbol{\gamma}:=\boldsymbol{c}-\textbf{1}$.

%Let $S\subseteq \N^d$ be a good semigroup. 
A subset $E \subseteq \N^d$ is a \it relative ideal \rm of $S$ if $E+S \subseteq E$ and there exists $\al \in S$ such that $\al + E \subseteq S$. A relative ideal $E$ contained in $S$ is simply called an ideal. A (relative) ideal $E$
satisfying properties (G1) and (G2) is said to be a (relative) \it good ideal \rm of S (any good relative ideal satisfies automatically property (G3)). In this article we will usually work with proper ideals $E \subseteq S$. By properties (G1) and (G3), there exist a minimal element $\boldsymbol{c}_E$ such that $\boldsymbol{c}_E + \N^d \subseteq E$. Such element is called the \it conductor \rm of $E$. As for $S$, we denote $\boldsymbol{\gamma}_E:=\boldsymbol{c}_E-\textbf{1}$.

By property (G1), if $S$ is local, $S$ has a unique 
  minimal nonzero element that we denote by $\bs{e}=(e_1,e_2,\ldots,e_d)$. \\
The set $\e + S$ is a good ideal of $S$ and its conductor is $\boldsymbol{c} + \e$. Similarly for every $\om \in S$, the principal good ideal $E= \om + S$ has conductor $\boldsymbol{c}_E = \boldsymbol{c} + \om$.

\medskip

%\begin{oss} Given $\al \in S$, if $\be \in \ds{F}{\al}$ and $\de\in \ds{\wh{F}}{\al}$ then $\al=\be\wt\de$. \end{oss}

Through this paper we set $I=\{1,\ldots, d\}$ where $d$ is the number of branches of the considered semigroups. The following notation is introduced for any subset $S \subseteq \N^d$ (but this will be usually applied to good semigroups or good ideals).
Let $F\subseteq I$, $\al\in \N^d$, then we set:
\begin{eqnarray*}
    \Delta^S_F(\al)&=&\{\be\in S \hspace{0.1cm}|\hspace{0.1cm} \beta_i=\alpha_i, \text{ for any } i\in F \text{ and } \beta_j>\alpha_j \text{ for } j\notin F\}.\\
    \widetilde{\Delta}^S_F(\al)&=&\{\be\in S \hspace{0.1cm}|\hspace{0.1cm} \beta_i=\alpha_i, \text{ for any } i\in F \text{ and } \beta_j\geq\alpha_j \text{ for } j\notin F\}\setminus\{\al\}.\\
    \Delta^S_i(\al)&=&\{\be\in S\hspace{0.1cm}|\hspace{0.1cm} \beta_i=\alpha_i, \mbox{ and } \beta_j>\alpha_j \mbox{ for any } j\neq i\}\\
    \Delta^S(\al)&=&\bigcup_{i=1}^d\Delta_i^S(\al)
\end{eqnarray*}
In particular, for $S=\N^d$, we set:
   \[ \Delta_F(\al):=\Delta_F^{\N^d}(\al)=\{\be\in \N^d \hspace{0.1cm}|\hspace{0.1cm} \beta_i=\alpha_i, \text{ for any } i\in F \text{ and } \beta_j>\alpha_j \text{ if } j\notin F\}.\]

Given $F\subseteq I$, we denote by $\wh{F}$ the set $I \setminus F$. We call $\wh{F}$ the \emph{orthogonal} set of $F$.

This notation has a precise geometrical meaning which is showed, for $d=3$, in Figure \ref{fig:notations}. In particular:\\
In (a), $\dN{F}{\al}$ is represented as an half-line; in this case $\dN{\wh F}{\al}$ is the orthogonal open half-plane. By definition this one does not include the two half-line which delimit the half plane.\\
In (b), $\dN{F}{\al}$ is again as an half-line and $\tdN{\wh F}{\al}$ is the orthogonal half-plane including also the two half-line which delimit it.\\
In (c), $\Delta(\al)$ is represented as union of the three open half-planes $\dN{1}{\al}$,$\dN{2}{\al}$,$\dN{3}{\al}$.

\begin{figure}[H]
\centering
\caption{\footnotesize{A graphical representation of the notation introduced}}
\begin{subfigure}{.35\textwidth}
\tdplotsetmaincoords{60}{120}
\begin{tikzpicture}[tdplot_main_coords, scale=0.8, font=\footnotesize]
\draw[thick,dashed,->] (0,0,0) -- (3.5,0,0) node[anchor= east]{$1$};
\draw[thick,dashed,->] (0,0,0) -- (0,2.8,0) node[anchor= west]{$2$};
\draw[thick,->] (0,0,0) -- (0,0,3) node[anchor=south]{$3$};

\draw plot [mark=*, mark size=1] coordinates{(0,0,0)}  node at (0,0.25,0.25) {$\bs{\alpha}$} node[anchor= south west] at (0,0,1.8) {$\dN{F}{\al}$} node at (1.75,1.4,0) {$\dN{\wh F}{\al}$};  

\filldraw[draw=white, fill=black!20, fill opacity=0.3]         
            (0,0,0)
            -- (3.5,0,0)
            -- (3.5,2.8,0)
            -- (0,2.8,0)
            -- cycle; 
   
\end{tikzpicture}
 \caption{\footnotesize{Representation of $\dN{F}{\al}$}}
\end{subfigure}%
\begin{subfigure}{.35\textwidth}
\tdplotsetmaincoords{60}{120}
\begin{tikzpicture}[tdplot_main_coords, scale=0.8, font=\footnotesize]

\filldraw[draw=white, fill=black!20, fill opacity=0.3]         
            (0,0,0)
            -- (3.5,0,0)
            -- (3.5,2.8,0)
            -- (0,2.8,0)
            -- cycle; 
            
\draw[thick,->] (0,0,0) -- (3.5,0,0) node[anchor= east]{$1$};
\draw[thick,->] (0,0,0) -- (0,2.8,0) node[anchor= west]{$2$};
\draw[thick,->] (0,0,0) -- (0,0,3) node[anchor=south]{$3$};

\draw plot [mark=*, mark size=1] coordinates{(0,0,0)}  node at (0,0.25,0.25) {$\bs{\alpha}$} node[anchor= south west] at (0,0,1.8) {$\dN{F}{\al}$} node at (1.75,1.4,0) {$\tdN{\wh F}{\al}$};  
\end{tikzpicture}
 \caption{\footnotesize{Representation of $\tdN{F}{\al}$}}
\end{subfigure}%
\begin{subfigure}{.35\textwidth}
 \tdplotsetmaincoords{60}{120}
\begin{tikzpicture}[tdplot_main_coords, scale=0.8, font=\footnotesize]

\filldraw[draw=white, fill=black!20, fill opacity=0.3]         
            (0,0,0)
            -- (3.5,0,0)
            -- (3.5,2.8,0)
            -- (0,2.8,0)
            -- cycle; 
            
\filldraw[draw=white, fill=black!20, fill opacity=0.3]         
            (0,0,0)
            -- (0,2.8,0)
            -- (0,2.8,3)
            -- (0,0,3)
            -- cycle; 

\filldraw[draw=white, fill=black!20, fill opacity=0.3]         
            (0,0,0)
            -- (0,0,3)
            -- (3.5,0,3)
            -- (3.5,0,0)
            -- cycle; 
            
\draw[thick,->] (0,0,0) -- (3.5,0,0) node[anchor= east]{$1$};
\draw[thick,->] (0,0,0) -- (0,2.8,0) node[anchor= west]{$2$};
\draw[thick,->] (0,0,0) -- (0,0,3) node[anchor=south]{$3$};

\draw plot [mark=*, mark size=1] coordinates{(0,0,0)}  node at (0,0.25,0.25) {$\bs{\alpha}$}  node at (1.75,1.4,0) {$\dN{3}{\al}$} node at (1.75,0,1.5) {$\dN{2}{\al}$} node at (0,1.4,1.5) {$\dN{1}{\al}$}; 
\end{tikzpicture}
 \caption{\footnotesize{Representation of $\Delta(\al)$}}
\end{subfigure}%
\label{fig:notations}
\end{figure}
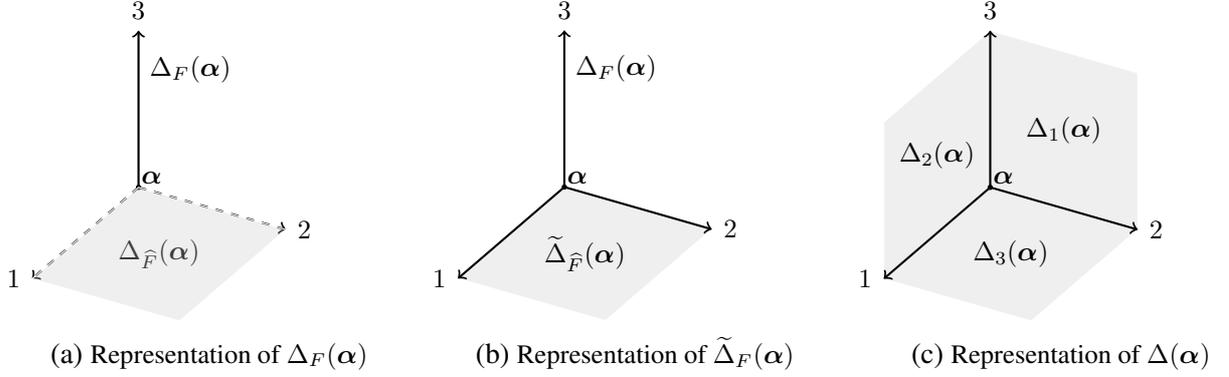

\begin{oss}
\label{remdelta}
Let $F,G \subseteq I$ and $E \subseteq \N^d$. We list some observations following directly from the preceding definitions:
\begin{enumerate}
    \item \label{remdelta1} $\be\in \widetilde{\Delta}^E_F(\al)$ if and only if there exists $G\supseteq F$, such that $\be\in \Delta^E_G(\al)$. 
    \item \label{remdelta2} $G\supseteq F$ if and only if, for every $\al \in \mathbb{N}^d$, $\tdE{G}{\al}\subseteq \tdE{F}{\al}$.
    \item \label{remdelta3} If $\te \in \Delta^E_{F}(\al)$, then $\tdE{G}{\te} \subseteq \dE{G}{\al}$ for every $G \supseteq F$.
    \item \label{remdelta4} Assume that $E$ is a good ideal. As a consequence of property (G2), if $\al\in E$ and $\dE{F}{\al}\neq \emptyset$, then $\tdE{\wh{F}}{\al}\neq \emptyset$.
    Equivalently, there exists $G\supseteq \wh{F}, G\neq I$, such that $\dE{G}{\al}\neq \emptyset$.
\end{enumerate}
\end{oss}

\medskip

We introduce the following terminology in order to have a better control on the use of  property (G2) and simplify our notation.

\begin{defi} \rm \label{completeinfimum}
Given a good semigroup $S\subseteq \N^d$, and $A\subseteq S$ be any subset. We say that $\al\in A$ is a \it complete infimum \rm in $A$ if there exist $\be^{(1)},\ldots,\be^{(r)}\in A$, with $r \geq 2$, satisfying the following properties:
\begin{enumerate}
\item $\be^{(j)} \in \ds{F_j}{\al}$ for some non-empty set $F_j \subsetneq I$.
\item For every $j\ne k \in \lbrace 1, \ldots, r \rbrace$, we have $\al = \be^{(j)} \wedge \be^{(k)} $.
\item $\bigcap_{k=1}^r {F_k}= \emptyset$.
    %\item $\al=\be^{(1)}\wedge\be^{(2)}\ldots\wedge\be^{(r)}$
    %\item For all $j\in I$ there exists $k\in\{1,\ldots,r\}$ and $F\subseteq I$ such that $\be^{(k)}\in \Delta_{F}^S(\al)$ and $j\in \widehat{F}$.
\end{enumerate}
In this case we write $\al=\be^{(1)}\wt\be^{(2)}\cdots\wt\be^{(r)}$.
\end{defi}
%It is easy to see the second condition in the previous definition is equivalent to following one:\\ \\
%\emph{2'.}  The element $\be^{(k)}\in \ds{F_k}{\al}$ with $F_k\subseteq I$ and $\bigcup_{k=1}^r\widehat{F_k}=I$. \bf forse è meglio scrivere  $\bigcap_{k=1}^r {F_k}= \emptyset$  \rm
%\begin{oss}
%\label{2expl}
%In property \emph{2}. we require that for every $j \in I$ there exists an element (that we call $\bei{k}$) with at least one coordinate strictly larger then $j$-th coordinate of the complete infimum $\al$.  
When $d=2$, the infimum of every two incomparable elements (with respect to "$\leq$") is a complete infimum of them.
More in general, given $\al \in S$, if there exists $\be \in \ds{F}{\al}$ and $\de\in \ds{\wh{F}}{\al}$, then $\al=\be\wt\de$ is the complete infimum of $\be$ and $\de$ in $S$.
For another example assume for some $\al \in S$, that $\ds{H}{\al} \neq \emptyset$ for every set $H$ of cardinality $d-1$. Then $\al$ is a complete infimum of a set of elements, each one picked from one of these sets of maximal cardinality. In this case $\al$ is a complete infimum of exactly $d$ elements.

\medskip

In this section we prove that Property (G2) has a useful application involving the notion of complete infimum (see, Proposition \ref{propG2}). Before doing that, we will need to prove several facts. We start by the next easy observation:

\begin{oss} \label{remarkG2}
Given a good ideal $E \subseteq S$, $\al \in E$, $F\subsetneq I$ and $\be \in \dE{F}{\al}$, there exist $\be^{(1)},\ldots,\be^{(r)}$ with $1 \leq r \leq |F|$, such that:  
	\begin{itemize}
	%\item 
	%$\al = \be \wt \be^{(1)}\wt\be^{(2)}\wt\ldots\wt\be^{(r)}$.
	\item 
	For $i=1, \ldots r$, $\be^{(i)} \in \dE{G_i}{\al}$, with $G_i \supseteq \wh{F}$.
	\item 
	$ G_1 \cap G_2 \cap \cdots \cap G_r = \wh{F}. $
	\end{itemize}
	%(i.e. $\al = \be \wt \be^{(1)}\wt\be^{(2)}\wt\ldots\wt\be^{(r)}$).
	
Indeed, for every $i \in F$, $\alpha_i=\beta_i$, and applying property (G2) to $\al$ and $\be$ with respect to $i$, we can find $\be^{(i)} \in \dE{G_i}{\al}$ with $G_i \supseteq \wh F$ and $i \not \in G_i$. By relabeling the indexes we may assume $i\in \lbrace 1,\ldots,r \rbrace $ with $r \leq |F|$. 

%it is possible to find $\be^{(1)} \in \dE{G_1}{\al}$ with $G_1 \supseteq \wh F$. If $G_1 = \wh F$, then we conclude setting $r=1$. If not, this means that there exists some $i \in G_1 \setminus \wh F$ and hence $\be_i=\be^{(1)}_i$. Applying property (G2) on $\be$ and $\be^{(1)}$ with respect to $i$, it is possible to find $\be^{(2)} \in \dE{G_2}{\al}$ with $G_2 \supseteq \wh F \cup \wh G_1$ and $i \not \in G_2$. Notice that now $\wh F \subsetneq G_2 $ and if $G_1 \cap G_2 = \wh F$ we conclude setting $r=2$. Otherwise we iterate the process applying property (G2) to 
	%\bf aggiungere una piccola dimostrazione \rm
\end{oss}

Notice that the elements $\be^{(i)}$ in the above remark may not be all distinct and therefore $\al$ is not necessarily a complete infimum of them and $\be$. We are going to prove that we can find an opportune collection of $\be^{(i)}$ fulfilling also this extra condition in Proposition \ref{propG2}. 

As consequence of the preceding remark we have the following useful fact:

\begin{prop} \label{usodiG2}
Let $\al \in E$, $F \subsetneq I$ a set of indexes and assume there exists a set $H$ of cardinality $d-1$ containing $F$ and such that $\dE{G}{\al} = \emptyset$ for every $F \subseteq G \subseteq H$. Then $\dE{\widehat{F}}{\al} = \emptyset$.
\end{prop}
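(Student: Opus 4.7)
The plan is to argue by contradiction, producing an element whose existence contradicts the hypothesis that $\dE{G}{\al}=\emptyset$ for every $F\subseteq G\subseteq H$.

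First I would unpack what the hypothesis gives geometrically. Since $F\subseteq H$ and $|H|=d-1$, the complement $\widehat{H}$ is a single coordinate, say $\widehat{H}=\{h\}$, and because $F\subseteq H$ we have $h\in\widehat{F}$. Now suppose, toward contradiction, that $\be\in\dE{\widehat{F}}{\al}$. Then $\beta_j=\alpha_j$ for every $j\in\widehat{F}$ (in particular $\beta_h=\alpha_h$) and $\beta_i>\alpha_i$ for every $i\in F$. The idea is to hit $\al$ and $\be$ with property (G2) at coordinate $h$, where they agree, and then read off the coordinates of the resulting element to locate it inside some $\dE{G}{\al}$ with $F\subseteq G\subseteq H$.

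Concretely: since $\al,\be\in E$, $\al\neq\be$ (because $F\neq\emptyset$ forces $\be$ to strictly exceed $\al$ somewhere), and $\alpha_h=\beta_h$, property (G2) yields $\eps\in E$ with $\epsilon_h>\alpha_h$ and $\epsilon_j\ge\min(\alpha_j,\beta_j)$ for $j\neq h$, with equality whenever $\alpha_j\neq\beta_j$. Splitting into cases: for $j\in F$ (so $j\neq h$), $\alpha_j<\beta_j$ forces $\epsilon_j=\alpha_j$; for $j\in\widehat{F}\setminus\{h\}$, $\alpha_j=\beta_j$ gives $\epsilon_j\ge\alpha_j$. Now set
\[
G=\{\,j\in I \mid \epsilon_j=\alpha_j\,\}.
\]
Then $F\subseteq G$ (by the first case) and $h\notin G$ (since $\epsilon_h>\alpha_h$ strictly), hence $G\subseteq I\setminus\{h\}=H$. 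Moreover, by construction $\epsilon_j>\alpha_j$ for every $j\notin G$, so $\eps\in\dE{G}{\al}$.

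This exhibits a nonempty $\dE{G}{\al}$ with $F\subseteq G\subseteq H$, contradicting the hypothesis, and proves $\dE{\widehat{F}}{\al}=\emptyset$. There is no real obstacle: the only thing to be careful about is the bookkeeping of coordinates when applying (G2), specifically checking that the equality clause in (G2) pins the $F$-coordinates of $\eps$ to $\al$ (so $F\subseteq G$) while the strict inequality at $h$ keeps $G$ inside $H$.
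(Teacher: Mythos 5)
Your argument is correct and is essentially the paper's proof: the paper also assumes $\be\in\dE{\widehat F}{\al}$ and applies property (G2) at the single coordinate $i$ with $\widehat H=\{i\}\subseteq\widehat F$ (via Remark \ref{remarkG2}) to produce an element of $\dE{G}{\al}$ with $F\subseteq G\subseteq H$, contradicting the hypothesis. You have merely unwound the remark into a direct, carefully bookkept application of (G2), which is fine.
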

\begin{proof}
Set $\wh H = \lbrace i \rbrace$.
 Assume by way of contradiction there exists $\be \in \dE{\widehat{F}}{\al}$. Using Remark \ref{remarkG2}, since $i \in \wh F$, we can find $\te \in \dE{G}{\al}$ with $G \supseteq F$ and $i \not \in G$. It follows $G \subseteq H$ and this is a contradiction.
\end{proof}

\medskip

Next lemma contains more basic results that will be widely used during this article. For any subset $A \subseteq S$, we say that two elements $\al, \be \in A$ are \it consecutive \rm in $A$ if whenever $\al \leq \de \leq \be$ for some $\de \in A$, then $\de = \al$ of $\de = \be$.

\begin{lem} \label{minimidelta} Let $E \subseteq S$ be a proper good ideal and let $\al \in E$. Then:
\begin{enumerate}
    \item \label{minimidelta1}If $\be \in \dE{F}{\al}$ and $\te \in \dE{G}{\al}$, then $\be \wedge \te \in \dE{F \cup G}{\al}$ if $F \cup G \subsetneq I$ and $\be \wedge \te = \al$ if $F \cup G = I$.
    \item \label{minimidelta2}Let $\be \in \dE{F}{\al}$ be consecutive to $\al$ in $E$. Then $\dE{H}{\al}= \emptyset$ for every $H \supsetneq F$.
    \item \label{minimidelta3}Assume $\dE{F}{\al} = \emptyset$ and $F=G_1 \cup G_2$. Then either $\dE{G_1}{\al} = \emptyset$ or $\dE{G_2}{\al} = \emptyset$.
    \item \label{minimidelta4}Assume there exists $F \subsetneq I$ such that $\dE{H}{\al} = \emptyset$ for every $H \supsetneq F$. Then $\dE{H}{\al} = \emptyset$ for every $H \subsetneq \wh F$.
    \item \label{minimidelta5}Assume there exists a maximal set $F \subsetneq I$ such that $\dE{F}{\al} \neq \emptyset$ ($F$ is not necessarily unique). %there exists $F \subsetneq I$ (not unique) such that 
    If
    $\dE{H}{\al} \neq \emptyset$ then either $H \subseteq F$ or $H \supseteq \wh F$. %\item Assume $\dE{F}{\al} \neq \emptyset$ and $\dE{H}{\al} = \emptyset$ for every $H \supsetneq F$ and for every $H \supsetneq \wh F$. Given $G\subset I$, $G\neq \emptyset$, then $\dE{ G}{\al} \neq \emptyset$ if and only if $G=F$ or $G= \wh F$.

\end{enumerate}
\end{lem}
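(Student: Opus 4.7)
The plan is to prove the five items in order, using part (1) as the main engine for parts (2), (3), and (5), and invoking property (G2) directly for part (4). For part (1), I would compute $\be \wedge \te$ coordinate by coordinate: for $i \in F$ we have $\beta_i = \alpha_i$ and $\theta_i \geq \alpha_i$, so the minimum is $\alpha_i$; likewise for $i \in G \setminus F$. For $i \notin F \cup G$ both $\beta_i$ and $\theta_i$ exceed $\alpha_i$, so the minimum does as well. By (G1) applied to $E$, $\be \wedge \te$ lies in $E$; therefore $\be \wedge \te = \al$ when $F \cup G = I$, and $\be \wedge \te \in \dE{F \cup G}{\al}$ otherwise.

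For part (2), suppose for contradiction that $\te \in \dE{H}{\al}$ with $F \subsetneq H \subsetneq I$. Part (1) produces $\be \wedge \te \in \dE{H}{\al}$, and this element satisfies $\al < \be \wedge \te < \be$: it strictly exceeds $\al$ on $\wh H$ (non-empty since $H \neq I$) and is strictly smaller than $\be$ on $H \setminus F$ (non-empty since $H \supsetneq F$), contradicting consecutiveness. For part (3) I argue by contrapositive: if both $\dE{G_1}{\al}$ and $\dE{G_2}{\al}$ were non-empty, pick one element from each; part (1) yields an element of $\dE{G_1 \cup G_2}{\al} = \dE{F}{\al}$, contradicting $\dE{F}{\al} = \emptyset$.

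For part (4), let $H \subsetneq \wh F$ be non-empty and assume for contradiction $\be \in \dE{H}{\al}$. Pick any $i \in H$. Property (G2) applied to $\al$ and $\be$ at the common coordinate $i$ produces $\eps \in E$ with $\epsilon_i > \alpha_i$, $\epsilon_j = \alpha_j$ on $\wh H$ (forced since $\alpha_j \neq \beta_j$ there), and $\epsilon_j \geq \alpha_j$ on $H \setminus \{i\}$. Setting $K = \{j : \epsilon_j = \alpha_j\}$, we have $\eps \in \dE{K}{\al}$ with $K \supseteq \wh H$ and $i \notin K$. Since $H \subsetneq \wh F$ forces $\wh H \supsetneq F$, we obtain $K \supsetneq F$, contradicting the hypothesis $\dE{K}{\al} = \emptyset$.

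For part (5), suppose $\dE{H}{\al}$ is non-empty with $H \not\subseteq F$ and $H \not\supseteq \wh F$; choose $i \in H \setminus F$, $j \in \wh F \setminus H$, $\be \in \dE{F}{\al}$, and $\te \in \dE{H}{\al}$. Since $j \notin F \cup H$, the set $F \cup H$ is a proper subset of $I$, so part (1) gives $\be \wedge \te \in \dE{F \cup H}{\al}$; since $i \in (F \cup H) \setminus F$, this $\Delta$-set is indexed by a proper superset of $F$, contradicting the maximality of $F$. The main bookkeeping point throughout is to ensure that each relevant index union is a proper subset of $I$, so that part (1) returns a non-trivial $\Delta$-set rather than collapsing to $\al$; selecting witnessing indices in $\wh F$ or $\wh H$ at each step handles this uniformly.
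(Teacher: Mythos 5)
Your proof is correct and follows essentially the same route as the paper: item (1) by coordinatewise computation plus (G1), items (2), (3), (5) as consequences of item (1), and item (4) by the same (G2) argument that the paper packages as Proposition \ref{usodiG2} (you simply inline it). The only cosmetic difference is that you prove (5) directly from (1) rather than via (3) and (4), which changes nothing of substance.
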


\begin{proof}
1. Set $\he := \be \wedge \te $. The thesis follows since, by definition, $\eta_i=\alpha_i$ if and only if $i \in F \cup G$. \\
2. Suppose $\de \in \dE{H}{\al}$ for some $H \supsetneq F$. The element $\te:=\be \wedge \de \in \dE{H}{\al}$ by item 1. Hence $\al < \te < \be$ and this is a contradiction since $\al$ and $\be$ are consecutive in $E$. \\
3. It is a straightforward consequence of item \ref{minimidelta1}. \\
4. It is an easy consequence of Proposition \ref{usodiG2}. \\
5. By assumption $\dE{H}{\al} = \emptyset$ for every $H \supsetneq F$ and, by item \ref{minimidelta4}, also for every $H \subsetneq \wh F$. Assume $H$ to be not comparable by inclusion with both $F$ and $\wh F$. It follows that $F \subsetneq (F \cup H) \subsetneq I$. By item \ref{minimidelta3}, necessarily $\dE{H}{\al} = \emptyset$.
%4. If $G=F$ there is nothing to prove, we prove that $\dE{ \wh F}{\al} \neq \emptyset$. By property (G2), as observed in Remark \ref{remarkG2}, there exists some set $H \supseteq \wh F$ such that $\dE{ H}{\al} \neq \emptyset$. Hence, by hypothesis, necessarily $H= \wh F$. Conversely if  $\dE{ G}{\al} \neq \emptyset$ for some $G \neq F$, take $\be \in \dE{F}{\al}$ and $\te \in \dE{G}{\al}$. If $F \cup G \subsetneq I$, item 1 implies that $\be \wedge \te \in \dE{F \cup G}{\al}$; but this set is empty by hypothesis, since $F\cup G\supset F$. Hence, $F \cup G = I$ and this implies $G \supseteq \wh F$ and therefore again $G= \wh F$.
\end{proof}

As application we discuss a key consequence of property (G1) of good ideals.

\begin{oss} \label{rem1} 
Let $E \subseteq S$ be a proper good ideal and let $\al \in S \setminus E$.
    Assume $\dE{F}{\al} \neq \emptyset$. As consequence of property (G1) and of Lemma \ref{minimidelta}.\ref{minimidelta1}, $\tdE{\widehat{F}}{\al} = \emptyset. $ 
\end{oss}

We are now ready to describe the relation between property (G2) and complete infimums.
For semplicity we will denote by $$R_i:=I\setminus \{i\}=\{1,2,\ldots,i-1,i+1,\ldots,d\}, \hspace{2cm} i=1,\ldots, d.$$ 

\begin{prop}
\label{propG2}
Let $S \subseteq \N^d$ be a good semigroup, $E \subseteq S$ a good ideal and take $\al \in E$.  Suppose there exists $\be \in \dE{F}{\al}$ for some $F\subsetneq I$. Then, there exist $\be^{(1)},\cdots,\be^{(r)}$ with $1 \leq r \leq |F|$, such that  $$\al = \be \wt \be^{(1)}\wt\be^{(2)}\wt\cdots\wt\be^{(r)}.$$
	In particular $\be^{(i)} \in \dE{G_i}{\al}$, with $G_i \supseteq \wh{F}$ and
	$ G_1 \cap G_2 \cap \cdots \cap G_r = \wh{F}. $
\end{prop}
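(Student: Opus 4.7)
The plan is to argue by strong induction on $|F|$.

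For the base case $|F| = 1$, write $F = \{i\}$. Since $\beta_\ell > \alpha_\ell$ for every $\ell \in \wh F$, applying (G2) to $\al$ and $\be$ at index $i$ forces the resulting element $\be^{(1)}$, via the equality clause (as $\alpha_\ell \ne \beta_\ell$ for $\ell \in \wh F$), to satisfy $\be^{(1)}_\ell = \alpha_\ell$ for all $\ell \in \wh F$ while $\be^{(1)}_i > \alpha_i$. Hence $\be^{(1)} \in \dE{\wh F}{\al}$, and $r = 1$ with $G_1 = \wh F$ works; the pairwise condition $\be \wedge \be^{(1)} = \al$ follows from Lemma~\ref{minimidelta}.\ref{minimidelta1} since $F \cup \wh F = I$.

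For the inductive step, apply (G2) to $(\al,\be)$ at some $i \in F$ to obtain $\bs{\epsilon} \in \dE{G}{\al}$ with $G \supseteq \wh F$ and $i \notin G$. If $G = \wh F$, set $r = 1$ and $\be^{(1)} = \bs{\epsilon}$. Otherwise set $F_1 := \wh G \subsetneq F$ (nonempty, contains $i$) and $F' := F \cap G = F \setminus F_1$ (also nonempty, since $G \supsetneq \wh F$). The decisive move is to apply (G2) not to $(\al, \be)$ again, but to the pair $(\be, \bs{\epsilon})$: these agree precisely on $F'$ (both equal $\al$) and differ precisely on $\wh F \cup F_1$ (on $\wh F$ only $\be$ exceeds $\al$, and on $F_1$ only $\bs{\epsilon}$ does). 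Applying (G2) at each $j \in F'$ then yields, via the equality clause, an element $\bs{\zeta}^{(j)} \in \dE{K_j}{\al}$ with $K_j \supseteq \wh F \cup F_1$ and support $\wh{K_j} \subseteq F'$ containing $j$.

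The family $\{\bs{\zeta}^{(j)}\}_{j \in F'}$ now encodes an instance of the same proposition on the smaller index set $F'$, with $\wh F \cup F_1$ in the role of $\wh F$. Invoking the inductive hypothesis (justified since $|F'| < |F|$) produces a partition of $F'$ into supports of elements $\be^{(2)}, \ldots, \be^{(r)}$ with associated $G_i \supseteq \wh F \cup F_1 \supseteq \wh F$. Setting $\be^{(1)} := \bs{\epsilon}$ (whose support is $F_1$) then gives a partition of $F$; the pairwise condition $G_i \cup G_k = I$ (disjointness of supports) and the intersection condition $\bigcap_i G_i = \wh F$ (supports cover $F$) are immediate from the partition. \textbf{The main obstacle} is the precise form of this reduction: to invoke the inductive hypothesis cleanly one must either produce a single element $\bs{\delta} \in \dE{\wh F \cup F_1}{\al}$ with support exactly $F'$, assembled from the $\bs{\zeta}^{(j)}$'s by iterated use of (G2) together with infima in $E$ via (G1), or restate the inductive hypothesis to accept a family of seed elements rather than a single $\be$. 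Verifying this reduction, and checking that the resulting collection satisfies the pairwise infimum condition, is the only non-routine step in the proof.
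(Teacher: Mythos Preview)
Your inductive scheme is natural but the reduction step is not complete, and you explicitly flag this yourself. The issue is more serious than a technicality: to invoke the inductive hypothesis with $F'$ in the role of $F$ you need an element $\bs{\delta} \in \dE{F'}{\al}$ (agreeing with $\al$ on $F'$ and strictly larger on $\wh{F'} = \wh F \cup F_1$), \emph{not} an element of $\dE{\wh F \cup F_1}{\al}$ as you write. None of $\be$, $\bs{\epsilon}$, or your $\bs{\zeta}^{(j)}$'s lie in $\dE{F'}{\al}$; each $\bs{\zeta}^{(j)}$ is in $\dE{K_j}{\al}$ with $K_j \supseteq \wh{F'}$, so these are already candidate \emph{outputs}, not inputs, for the proposition at level $F'$. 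Assembling such a $\bs{\delta}$ by (G1) and (G2) alone is not routine: infima of the $\bs{\zeta}^{(j)}$'s only enlarge the set of fixed coordinates, and applying (G2) to pairs among $\{\be, \bs{\epsilon}, \bs{\zeta}^{(j)}\}$ does not force strict inequalities simultaneously on all of $\wh F \cup F_1$. Your alternative of ``restating the inductive hypothesis to accept a family'' amounts to asking directly for the pairwise condition $K_j \cup K_{j'} = I$ on the $\bs{\zeta}^{(j)}$'s, which is precisely the content of the proposition and cannot be assumed.

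The paper's argument avoids induction entirely. For each $i \in F$ it takes $G_i$ \emph{maximal} among sets with $\wh F \subseteq G_i \subseteq R_i$ and $\dE{G_i}{\al} \neq \emptyset$; uniqueness of this maximal set follows from Lemma~\ref{minimidelta}.\ref{minimidelta1}. The intersection $\bigcap G_i = \wh F$ is then automatic, and the pairwise condition $G_i \cup G_j = I$ (for $G_i \neq G_j$) is established by contradiction: a failure forces a strict containment $G_j \subsetneq G_i$, and then maximality of $G_j$ shows $\dE{T}{\al} = \emptyset$ for every $T$ with $\wh{G_i} \subseteq T \subseteq R_j$, which by Proposition~\ref{usodiG2} gives $\dE{G_i}{\al} = \emptyset$, a contradiction. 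The maximality device is the missing idea in your approach; it simultaneously pins down the sets $G_i$ and supplies the leverage needed for the pairwise infimum condition.
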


\begin{proof}
Applying property (G2) as in Remark \ref{remarkG2}, for every $i \in F$, we can find 
a maximal set $G_i$ such that $\widehat{F} \subseteq G_i \subseteq R_i = I \setminus \{ i \}$ and $\dE{G_i}{\al} \neq \emptyset$. For a fixed index $i \in F$, such set $G_i$ is unique since, if there were two sets $G_i$,$G_i'$ fulfilling the same condition, then also their union $G_i \cup G_i' \subseteq R_i$ and, by Lemma \ref{minimidelta}.1, $\dE{G_i \cup G_i'}{\al} \neq \emptyset$ contradicting the assumption of maximality.
Obviously, by relabeling the indexes, we can always assume $F= \{ 1, \ldots, r\}$, call those sets $G_1, \ldots, G_r$ and by construction we have $ G_1 \cap G_2 \cap \cdots \cap G_r = \wh{F}. $ Observe that the last condition on the intersection is satisfied also if we have $G_i=G_j$ for some $i,j \in F$ and in this case we consider this set only once. To conclude that $\al$ is a complete infimum of elements chosen with respect to such sets we still need to prove condition 2 of Definition \ref{completeinfimum}, that, again by Lemma \ref{minimidelta}.1, is equivalent to show that $G_i \cup G_j = I$ for every choice of $i,j \in F$ such that $G_i \neq G_j$. \\
Assume this to be not true by way of contradiction. Hence there exists an index $k \in F$ such that $k \not \in G_i \cup G_j$ and moreover $\dE{G_i \cup G_j}{\al} \neq \emptyset$. Thus $G_i \subsetneq G_i \cup G_j \subseteq G_k$ and therefore we can restrict to show that we cannot have a proper containment $G_j \subsetneq G_i$ for any $i,j \in F$. If this was the case, by maximality of $G_j$, this would imply $j \in G_i$. Consider now a set $T$ such that $\widehat{G_i} \subseteq T \subseteq R_j$. Clearly $T \nsubseteq G_j$ otherwise we would have the contradiction $\widehat{G_i} \subseteq T \subseteq G_j \subsetneq G_i$, and therefore $G_j \subsetneq G_j \cup T \subseteq R_j$. By maximality of $G_j$ this implies $\dE{G_j \cup T}{\al} = \emptyset$, and by Lemma \ref{minimidelta}.3, this implies also $\dE{T}{\al} = \emptyset.$ Now, Proposition \ref{usodiG2} implies that $\dE{G_i}{\al} = \emptyset$ and this is a contradiction. 
\end{proof}

\medskip 

Now we want to generalize to any good semigroup a property already known in the case $d=2$. As in \cite{good:danna} we denote by $\operatorname{Small}(S)=\{\al\in S:$ $\al\leq \bs{c}\}$ the set of the \emph{Small Elements} of $S$. As a consequence of the following proposition, the Small Elements determine all the elements of the semigroup.

\begin{prop}
\label{infsem}
Let $S\subseteq \N^d$ be a good semigroup, $E\subseteq S$ a good ideal and pick $\al\in \N^d$. Suppose $\bs{c}_E=(c_1,\ldots,c_d)\in \dE{F}{\al}$, for some non-empty set $F\subsetneq I$. The following assertions are equivalent:
\begin{enumerate}
     \item \label{infsem1} $\al\in E$. 
    \item \label{infsem2} $\widetilde{\Delta}_{\wh{F}}(\al) \subseteq E$.
    \item \label{infsem3}  $\tdE{\wh{F}}{\al}\neq \emptyset$.
%\item \label{infsem1}If $\al\in E$, then $\widetilde{\Delta}_{\wh{F}}(\al)\subseteq E$.
    %\item \label{infsem2}If $\dE{\wh{F}}{\al}\neq \emptyset$, then $\al\in E$. % and $\widetilde{\Delta}_{\wh{F}}(\al)\subseteq E$.
    %\item \label{infsem3}$\widetilde{\Delta}_{\wh{F}}{\al}\subseteq E$ if and only if $\tdE{\wh{F}}{\al}\neq \emptyset$ if and only if $\al \in E$.
\end{enumerate}
\end{prop}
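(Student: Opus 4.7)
The plan is to prove $(1)\Leftrightarrow(3)$ cheaply, note that $(2)\Rightarrow(3)$ is immediate, and handle $(1)\Rightarrow(2)$ as the main step by an inductive construction using Proposition~\ref{propG2}.

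\textbf{The easy implications.} For $(1)\Rightarrow(3)$: since $\bs c_E\in\dE{F}{\al}$ witnesses $\dE{F}{\al}\neq\emptyset$, Remark~\ref{remdelta}.\ref{remdelta4} applied to the good ideal $E$ yields $\tdE{\wh F}{\al}\neq\emptyset$. For $(3)\Rightarrow(1)$: take $\be\in\tdE{\wh F}{\al}$; from $\bs c_E\in\dE{F}{\al}$ we have $\alpha_j<c_j$ on $\wh F$ and $\alpha_i=c_i$ on $F$, while $\beta_j=\alpha_j$ on $\wh F$ and $\beta_i\geq\alpha_i$ on $F$, so a coordinate-wise check gives $\be\wedge\bs c_E=\al$, and (G1) applied to $\be,\bs c_E\in E$ delivers $\al\in E$. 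For $(2)\Rightarrow(3)$: since $F\neq\emptyset$, fixing any $i_0\in F$ and raising $\al$ by $1$ in coordinate $i_0$ produces an element of $\widetilde\Delta_{\wh F}(\al)$, which by hypothesis lies in $E$ and therefore in $\tdE{\wh F}{\al}$.

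\textbf{Main implication $(1)\Rightarrow(2)$.} Fix $\be\in\widetilde\Delta_{\wh F}(\al)$ and define $\mathbf N\in\N^d$ by $N_i=\beta_i$ for $i\in F$ and $N_j=c_j$ for $j\in\wh F$; since $\mathbf N\geq\bs c_E$, it lies in $E$. It suffices to produce $\bs\epsilon\in E$ with $\epsilon_j=\alpha_j$ for $j\in\wh F$ and $\epsilon_i\geq\beta_i$ for $i\in F$, because then $\bs\epsilon\wedge\mathbf N=\be$ and (G1) concludes. I would construct $\bs\epsilon$ by induction on $N(\be):=\sum_{i\in F}(\beta_i-c_i)\geq 1$. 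Pick $i_0\in F$ with $\beta_{i_0}>c_{i_0}$ and set $\be':=\be-\bs u_{i_0}$, where $\bs u_{i_0}$ is the $i_0$-th standard basis vector. Either $N(\be')=0$ and $\be'=\al\in E$ by hypothesis (1), or $\be'\in\widetilde\Delta_{\wh F}(\al)$ with smaller $N$ and $\be'\in E$ by induction. Next, let $\mathbf W\in E$ be given by $W_i=\beta'_i$ on $F$ and $W_j=c_j$ on $\wh F$; then $\mathbf W\in\dE{F}{\be'}$. Apply Proposition~\ref{propG2} to $\be'$ and $\mathbf W$ to obtain $\be'^{(1)},\ldots,\be'^{(r)}\in E$ with $\be'^{(k)}\in\dE{G_k}{\be'}$, $G_k\supseteq\wh F$, and $\bigcap_k G_k=\wh F$. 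Because $i_0\in F\setminus\wh F=F\setminus\bigcap_kG_k$, some $k$ has $i_0\notin G_k$; the corresponding $\bs\epsilon:=\be'^{(k)}$ then satisfies $\epsilon_{i_0}>\beta'_{i_0}=\beta_{i_0}-1$, hence $\epsilon_{i_0}\geq\beta_{i_0}$, equals $\alpha_j$ on $\wh F\subseteq G_k$, and is at least $\beta'_i=\beta_i$ on $F\setminus\{i_0\}$ (since $\be'^{(k)}\geq\be'$ componentwise). This is the required $\bs\epsilon$.

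\textbf{Main obstacle.} The delicate step is the inductive one: one must exhibit an element of $E$ that strictly exceeds $\be'$ in a prescribed single coordinate $i_0\in F$ while remaining equal to $\al$ on the entire set $\wh F$. Proposition~\ref{propG2} is precisely the tool that accomplishes this, because the identity $\bigcap_k G_k=\wh F$ forces at least one of the produced elements to be strict in the chosen coordinate $i_0$; combined with the fact that coordinates in $\N^d$ jump by integers, the strict inequality $\epsilon_{i_0}>\beta'_{i_0}$ upgrades to the needed $\epsilon_{i_0}\geq\beta'_{i_0}+1=\beta_{i_0}$.
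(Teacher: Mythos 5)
Your proof is correct, and although it relies on the same essential ingredients as the paper's argument --- elements dominating $\bs{c}_E$ to supply one leg of an infimum, Proposition \ref{propG2} to supply the other, and property (G1) to close --- it organizes the main implication $(1)\Rightarrow(2)$ in a genuinely different way. The paper starts from an element $\de\in\tdE{\wh F}{\al}$ produced by (G2) and splits into three cases according to whether $\de\geq\be$, $\de<\be$, or the two are incomparable; the delicate case $\de<\be$ is handled by climbing an increasing chain inside $E$ below $\be$, selecting a maximal such element $\de^{(k)}$, and deriving a contradiction with maximality via Proposition \ref{propG2}. You instead run a single well-founded induction on $\sum_{i\in F}(\beta_i-c_i)$, stepping from $\al$ up to $\be$ one unit in one coordinate of $F$ at a time; at each step you apply Proposition \ref{propG2} to $\be'$ and the conductor-dominating element $\mathbf{W}\in\dE{F}{\be'}$ to manufacture $\bs{\epsilon}\in E$ that is constant on $\wh F$ and strict in the chosen coordinate $i_0$, and the integrality of $\N^d$ upgrades $\epsilon_{i_0}>\beta_{i_0}-1$ to $\epsilon_{i_0}\geq\beta_{i_0}$, after which (G1) applied to $\bs{\epsilon}\wedge\mathbf{N}=\be$ finishes the step. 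I checked the coordinate computations ($\mathbf{W}\in\dE{F}{\be'}$ needs $c_j>\alpha_j$ on $\wh F$, which holds because $\bs{c}_E\in\dE{F}{\al}$; the existence of $k$ with $i_0\notin G_k$ follows from $\bigcap_k G_k=\wh F$) and they all go through. Your route avoids the case analysis and the maximality-plus-contradiction argument entirely, at the cost of a longer chain of intermediate elements; it is arguably cleaner to verify. The remaining implications ($(1)\Leftrightarrow(3)$ via Remark \ref{remdelta}.\ref{remdelta4} and (G1), and $(2)\Rightarrow(3)$ by exhibiting $\al$ raised by one in a coordinate of $F$) are routine and agree in substance with the paper's, which instead closes the cycle as $(1)\Rightarrow(2)\Rightarrow(3)\Rightarrow(1)$, so your separate proof of $(1)\Rightarrow(3)$ is logically redundant but harmless.
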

\begin{proof}
\ref{infsem1} $\Rightarrow$ \ref{infsem2}) Let $\be\in \widetilde{\Delta}_{\wh{F}}{(\al)}$. Since $\bs{c}_E \in \dE{F}{\al}$ and $\al \in E$, by %Remark \ref{remdelta}.\ref{remdelta4}
 Property (G2) there exists $\de\in \tdE{\wh{F}}{\al}$. We consider three cases.\\
\bf Case 1: $\de\geq \be.$ \rm In this case $\delta_i=\beta_i=\alpha_i$ for all $i\in \wh{F}$ and $\delta_i\geq \beta_i\geq \alpha_i$ for all $i\in F$. Now we find $\eps\in \N^d$ such that $\epsilon_i=c_i > \alpha_i$ for all $i\in \wh{F}$ and $\epsilon_i=\beta_i\geq \alpha_i=c_i$ for all $i\in F$. Since $e_i\geq c_i$ for every $i\in I$, by definition of conductor, $\eps\in E$; hence $\be=\eps\wedge \de\in E$.\\
\bf Case 2: $\de<\be.$ \rm We set $\de^{(1)}:=\de$ and we consider $\eps^{(1)}$ such that $\epsilon^{(1)}_i=c_i$ for all $i\in \wh{F}$ and $\epsilon^{(1)}_i=\delta^{(1)}_i\geq \alpha_i=c_i$ for all $i\in F$. As before, since $\eps^{(1)}\geq \bs{c}_E$, $\eps^{(1)}\in E$. Furthermore $\eps^{(1)}\in \ds{F}{\de^{(1)}}$, hence, by Remark \ref{remdelta}.\ref{remdelta4}, $\tdE{\wh{F}}{\de^{(1)}}\neq \emptyset$. Therefore there exists $\de^{(2)}>\de^{(1)}$, such that $\de^{(2)}\in \tdE{\wh{F}}{\de^{(1)}}$. If $\de^{(2)}\geq \be$ we conclude as in Case 1; otherwise we can repeat the previous construction on $\de^{(2)}$ finding an element $\de^{(3)}>\de^{(2)}$. 
%\bf NON FUNZIONA! bisogna usare la proprietà G2 migliorata con i minimi completi \rm
After a finite number of iterations, if we never conclude by Case 1, we find a maximal element $\de^{(k)} \in E$ such that  $\de^{(k)} \leq \be$. Considering the corresponding element $\eps^{(k)}\in \ds{F}{\de^{(k)}}$ and applying Property (G2) following Proposition \ref{propG2}, we write  
	$$\de^{(k)} = \eps^{(k)} \wt \he^{(1)}\wt\he^{(2)}\wt\cdots\wt\he^{(r)}$$ with $\he^{(i)} \in \dE{G_i}{\de^{(k)}}$, and
$ G_1 \cap G_2 \cap \cdots \cap G_r = \wh{F}. $ 
If  $\he^{(i)} \geq \be$ for some $i$, we conclude using Case 1. In the opposite case we will get a contradiction. Indeed, for every $i$, we have $\de^{(k)} \leq \he^{(i)} \wedge \be < \be$. Say that $\be \in \Delta_H(\de^{(k)})$ for $H \supseteq \wh F$ and notice that there must exists $i$ such that $G_i \cup H \subsetneq I$, otherwise we would have the contradiction $$\wh H \subseteq G_1 \cap G_2 \cap \cdots \cap G_r = \wh{F} \subseteq H. $$ Hence, set $G:= G_i$ and $\he:= \he_i$. For $j \not \in G \cup H$, we have $\eta_j, \beta_j > \delta^{(k)}_j$ and hence $\de^{(k)} < \he \wedge \be < \be$. Since $\he \in E$, by Case 1, $\he \wedge \be \in E$ and this contradicts the assumptions of maximality of $\de^{(k)}$. \\
\bf Case 3: $\de \wedge \be < \be, \de$. \rm In this case, applying Case 1 to $\de \wedge \be$ and $\de$, we get $\de \wedge \be \in E$. Then we use Case 2. \\
%Applying Case 1 to $\de^{(k)}$ and $\de^{(k)} \wedge \be$ 
%hence we can conclude using Case 1.\\
%\bf Case 3: $\de$ and $\be$ are incomparable with respect to $\leq$. \rm 
\ref{infsem2} $\Rightarrow$ \ref{infsem3}) It is straightforward. \\
\ref{infsem3} $\Rightarrow$ \ref{infsem1}) If $\be\in \tdE{\wh{F}}{\al}\neq \emptyset$, we have that $\beta_i=\alpha_i$ for all $i\in \wh{F}$ and $\beta_i\geq \alpha_i=c_i$ for all $i\in F$.
Since $\bs{c}_E\in \dE{F}{\al}$, we have $\bs{c}_E \wedge \be=\al\in E$. % hence we conclude using \ref{infsem1}.\\
%\emph{iii)} It is an immediate consequence of \ref{infsem1} and \ref{infsem2}.
\end{proof}

%\begin{oss}
The preceding proposition can be rephrased in the following way.
For $\al\in \N^d$ such that $\alpha_i=c_i$ for every $i$ in a non-empty set $F$ we have that
$\al \in E$ if and only if all the elements $\be \in \N^d$ such that $\beta_i \geq c_i$ for $i\in F$ and $\beta_i=\alpha_i$ for $i\in \wh{F}$ are also in $E.$
%\end{oss}

\medskip

\begin{ex}
\label{ex3rami}
We represented graphically in Figure \ref{fig:semex}, the semigroup $S\subseteq \N^3$ having:
\begin{align*}
    \operatorname{Small}(S)=&&\{(1,2,3),(1,2,6),(1,2,7),(1,2,8),(2,3,3),(2,3,6),(2,3,7),(2,4,3),\\
    &&(2,4,6),(2,4,9)(3,3,3),(3,3,6),(3,3,7),(3,5,3),(3,5,6),(3,5,9)\}
\end{align*}
It has conductor $\bs{c}=(3,5,9)$ and $\bs{\gamma}=(2,4,8)$.

%\begin{comment}

\begin{figure}[H]
\centering
 \tdplotsetmaincoords{70}{120}
\begin{tikzpicture}[tdplot_main_coords, scale=0.6, font=\small]

%ASSI
\draw[thick,->] (0,0,0) -- (15,0,0) node[anchor= east]{$1$};
\draw[thick,->] (0,0,0) -- (0,13,0) node[anchor= west]{$2$};
\draw[thick,->] (0,0,0) -- (0,0,16) node[anchor=south]{$3$};

\filldraw[draw=black, fill=black!20, fill opacity=0.5]         
            (3,5,3)
            -- (9,5,3)
            -- (9,11,3)
            -- (3,11,3)
            -- cycle; %(inf,2,inf)
\filldraw[draw=black, fill=black!20, fill opacity=0.3]         
            (3,5,6)
            -- (3,11,6)
            -- (9,11,6)
            -- (9,5,6)
            -- cycle; %(4,inf,inf)
\filldraw[draw=black, fill=black!20, fill opacity=0.3]         
            (3,5,9)
            -- (3,11,9)
            -- (3,11,15)
            -- (3,5,15)
            -- cycle; %(inf,4,inf)
\filldraw[draw=black, fill=black!20, fill opacity=0.3]         
            (3,5,9)
            -- (9,5,9)
            -- (9,5,15)
            -- (3,5,15)
            -- cycle; %(inf,4,inf)      
\filldraw[draw=black, fill=black!20, fill opacity=0.3]         
            (3,5,9)
            -- (9,5,9)
            -- (9,11,9)
            -- (3,11,9)
            -- cycle; %(inf,4,inf)    
            
%RETTE
\draw[-] (2,4,9) -- (2,4,15); %(2,4,inf)
\draw[-] (3,3,3) -- (9,3,3); %(inf,3,3)
\draw[-] (3,3,6) -- (9,3,6); %(inf,3,6)
\draw[-] (3,3,7) -- (9,3,7); %(inf,3,7)

%PUNTI
\draw plot [mark=*, mark size=1] coordinates{(0,0,0)};
\draw plot [mark=*, mark size=1] coordinates{(1,2,3)};
\draw plot [mark=*, mark size=1] coordinates{(1,2,6)};
\draw plot [mark=*, mark size=1] coordinates{(1,2,7)};
\draw plot [mark=*, mark size=1] coordinates{(1,2,8)};
\draw plot [mark=*, mark size=1] coordinates{(2,3,3)};
\draw plot [mark=*, mark size=1] coordinates{(2,3,6)};
\draw plot [mark=*, mark size=1] coordinates{(2,3,7)};
\draw plot [mark=*, mark size=1] coordinates{(2,4,3)};
\draw plot [mark=*, mark size=1] coordinates{(2,4,6)};
\draw plot [mark=*, mark size=1] coordinates{(2,4,9)};
\draw plot [mark=*, mark size=1] coordinates{(3,3,3)};
\draw plot [mark=*, mark size=1] coordinates{(3,3,6)};
\draw plot [mark=*, mark size=1] coordinates{(3,3,7)};
\draw plot [mark=*, mark size=1] coordinates{(3,5,3)};
\draw plot [mark=*, mark size=1] coordinates{(3,5,6)};
\draw plot [mark=*, mark size=1] coordinates{(3,5,9)} node[anchor= south west ]{$\bs{c}$};

% Ticks
\draw (1,-0.1,0) -- ++ (0,0.2,0) node[anchor= east] at (0,-0.5,0) {$1$};
\draw (2,-0.1,0) -- ++ (0,0.2,0) node[anchor= east] at (1,-0.5,0) {$2$};
\draw (3,-0.1,0) -- ++ (0,0.2,0) node[anchor= east] at (2,-0.5,0) {$3$};

\draw (-0.2,2,0) -- ++ (0.3,0,0) node[anchor= west] at (-1,1.3,0) {$2$};
\draw (-0.2,3,0) -- ++ (0.3,0,0) node[anchor= west] at (-1,2.3,0) {$3$};
\draw (-0.2,4,0) -- ++ (0.3,0,0) node[anchor= west] at (-1,3.3,0) {$4$};
\draw (-0.2,5,0) -- ++ (0.3,0,0) node[anchor= west] at (-1,4.3,0) {$5$};

\draw (-0.1,0,3) -- ++ (0.2,0,0) node[anchor= west] at (0,0,3) {$3$};
\draw (-0.1,0,6) -- ++ (0.2,0,0) node[anchor= west] at (0,0,6) {$6$};
\draw (-0.1,0,7) -- ++ (0.2,0,0) node[anchor= west] at (0,0,7) {$7$};
\draw (-0.1,0,9) -- ++ (0.2,0,0) node[anchor= west] at (0,0,9) {$9$};

\end{tikzpicture}
  \caption{\footnotesize{The semigroup $S$ of the Example \ref{ex3rami}}}
  \label{fig:semex}
\end{figure}
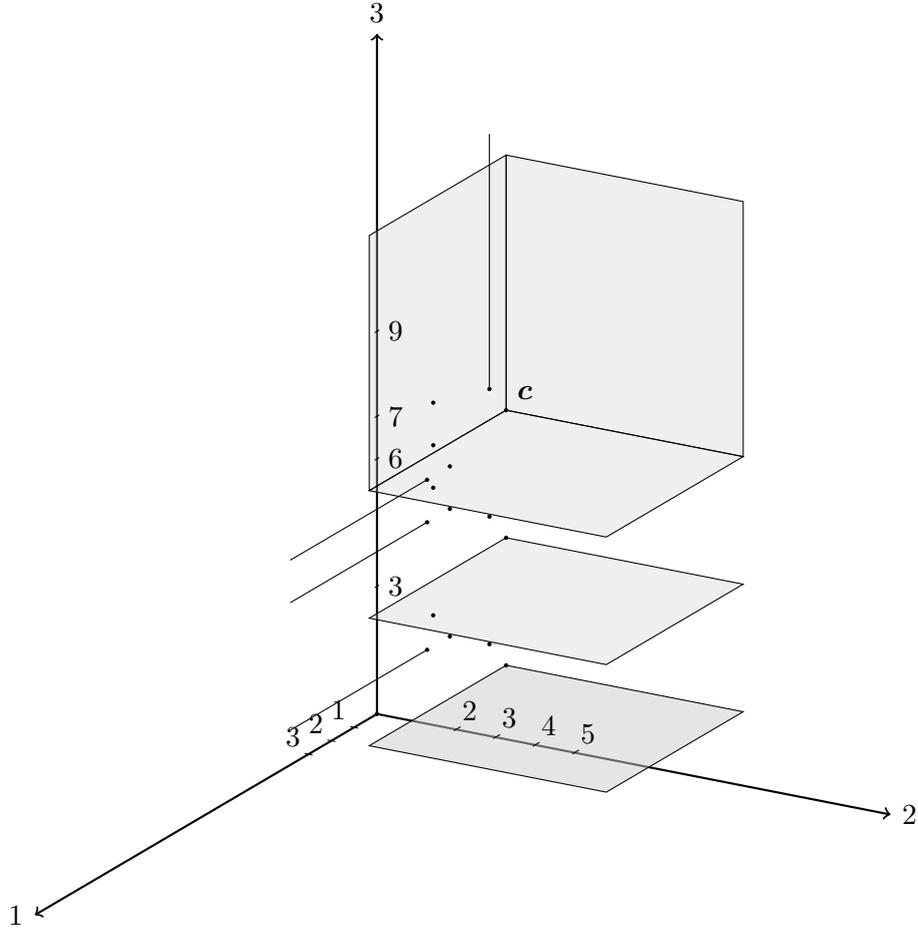
%\end{comment}

\end{ex}

\section{Partition of the complement of a good ideal}

Let $S \subseteq \N^d$ be a good semigroup and 
let $E \subseteq S$ be a proper good ideal. We call $A:=S \setminus E$ its complementary set. In this section, we introduce a partition of $A$ in a finite number of subsets, according to what has been done in \cite{DAGuMi} and \cite{type:good} in case $d=2$ (a similar partition was firstly defined in \cite{Apery:danna} for the Ap\'{e}ry set of the value semigroup of a plane curve).
In analogy to the definition given in these works, we will call \emph{levels} the elements of the partition. The partition will be defined using the definition of \emph{complete infimum} given in the previous section. 

%In order to get a partition of $A$ in levels as done when $d=2$ in the articles \cite{DAGuMi} and \cite{type:good}
Moreover, we need to use another order relation on $\mathbb N^d$ (already introduced in \cite{anal:unr} and in the successive works). Given $\al=(\alpha_1,\alpha_2,\ldots, \alpha_d)$ and $\be= (\beta_1,\beta_2,\ldots,\beta_d)$ in $\mathbb{N}^d$, we say that  $\al \leq \leq \be$ if and only if either $\al = \be$ or $\alpha_i<\beta_i$ for every $i\in\{1,\ldots, d\}$. When the second case is verified we say that $\be$ \it dominates \rm $\al$ and use the notation $\al \ll \be$.

%$(\alpha_1,\alpha_2,\ldots, \alpha_d)\ne(\beta_1,\beta_2, \ldots, \beta_d)$ and $(\alpha_1,\alpha_2,\ldots, \alpha_d)\ll(\beta_1,\beta_2, \ldots, \beta_d)$, where the last means $\alpha_i<\beta_i$ for any $i\in\{1,\ldots, d\}$.\\
%%We denote with $\bs{g^{(i)}}\in \N^d$ the elements such that $g^{(i)}_j=0$ for any $j\neq i$ and $g^{(i)}_i=1$.
%In order to simplify the notation, if $\al\in S$ can be wrhttps://www.overleaf.com/project/5dd7e3419099b90001064734itten as $\al=\bei{i}\wedge\bei{j}$, with $\bei{i},\bei{j}\in \N^d$ and there exist $\lambda_i,\lambda_j \in \N$ such that $\bei{i}=\al+\lambda_i\bs{g^{(i)}}$ and $\bei{j}=\al+\lambda_j\bs{g^{(j)}}$, we will write $\al=\bei{i}\wt\bei{j}$.\\

We define the partition in levels of $A \subsetneq S$. In general, $A$ will be the complementary set of a good ideal, but the only assumption really needed to define this partition is the existence of a conductor for the set $S \setminus A$ (property (G3)). %To make this definition easier to the reader, we first describe the case $d=3$. \bf ma serve descrivere il caso d=3 a parte? La definizione è identica. \rm

\begin{defi} \rm \label{deflivelli}
Set: 
$$ B^{(1)}:=\{\al \in A : \al \  \mbox{is maximal with respect to} \le\le\},$$
$$C^{(1)}:= \{ \al \in B^{(1)} : \al=\be^{(1)}\wt\be^{(2)}\cdots\wt\be^{(r)} \mbox{ where } 1<r\leq d,\mbox{ and } \bei{k} \in B^{(1)}\},$$
$$ D^{(1)}:=B^{(1)}\setminus C^{(1)}.$$
Assume $i>1$ and that $D^{(1)},\dots , D^{(i-1)}$ have been defined. Set inductively:
$$ B^{(i)}:=\{\al \in A \setminus (\bigcup_{j < i} D^{(j)}) : \al \  \mbox{is maximal with respect to} \le\le\},$$
$$C^{(i)}:= \{ \al \in B^{(i)} : \al=\be^{(1)}\wt\be^{(2)}\cdots\wt\be^{(r)} \mbox{ where } 1<r\leq d,\mbox{ and } \bei{k} \in B^{(i)}\},$$
$$D^{(i)}:=B^{(i)}\setminus C^{(i)}.$$ 
By construction $D^{(i)}\cap D^{(j)}=\emptyset$, for any $i\neq j$ and, 
since the set $S \setminus A$ has a conductor, there exists $N \in \mathbb N_+$ such that $A=\bigcup_{i=1}^N D^{(i)}$.
For simplicity, we prefer to number the set of the partition in increasing order with respect to the components of their elements,
thus we set $A_i:= D^{(N+1-i)}$. Hence 
$$A=\bigcup_{i=1}^N A_i. $$
We call the sets $A_i$ the \it levels \rm of $A$.
\end{defi}

Given $\om \in S$, we can consider the good ideal $E=\om+S$. In this case its complement $A= S \setminus E=\Ap(S, \om)$ is the Ap\'ery set of $S$ with respect to $\om$. The main aim of this article is to prove that the number of levels of an Ap\'ery set is equal to the sum of the components of the element $\om$. %$e_1+e_2+\ldots +e_d$.

\begin{oss} \label{lastlev}
As observed in \cite[Lemma 1]{DAGuMi} in the case $d=2$, it is straightforward to see that also for $d > 2$, if $\al, \be \in A$, $\al \ll \be$ and $\al \in A_i$, then $\be \in A_j$ for some $j>i$. Moreover, the last set of the partition is $A_N=\Delta(\ga_E)=\Delta^S(\ga_E)$
while, if $S$ is local, $A_1=\{\boldsymbol {0}\}$. %\bf nel caso non locale si puo comunque dire questa cosa per gli apery set, ma servono i lemmini dopo \rm
\end{oss}

The following results generalize several properties of the levels of $A$ proved in \cite[Lemma 1]{DAGuMi} in the case $d=2$ and $A= \Ap(S, \om)$.

\begin{lem}
\label{prelem}
The sets $A_i$ satisfy the following properties: 
\begin{enumerate}
    \item \label{prelem1} Given $\al\in A_i$ with $i < n$, either there exists $\be\in A_{i+1}$ such that $\al\ll\be$ or,  %necessarily
  $$\al=\be^{(1)}\wt\be^{(2)}\cdots\wt\be^{(r)}$$ where $1<r\leq d$, $\bei{k}\in A$, and at least one of them belong to $A_{i+1}$.
    \item \label{prelem2} Given $\al\in A_i$, with $i\neq N$, there exists $\be\in A_{i+1}$ such that $\be\geq \al$.
    \item \label{prelem3} For every $\al\in A_i$ and $\be\in A_j$, with $j\geq i$, $\be \not \ll \al$.
    \item \label{prelem4} Given $\al\in A_i$ and $\be\in A$ such that $\be \geq \al$, then $\be\in A_{i}\cup\cdots\cup A_N$.
    \item \label{prelem5}
    Let $\al= \be^{(1)}\wt\be^{(2)}\cdots\wt\be^{(r)} \in A$ and assume that for every $k$, $\bei{k} \in A_i$, then $\al \in A_h$ with $h < i$.
    %Given $\al\in A_i$, $\be\in\ds{F}{\al}\cap A_i$, then for any $\de\in A\cap \td{\wh{F}}{\al}$, we get $\de\in A_{i+1}\cup\ldots\cup A_N$.
    \item \label{prelem6} Assume $\al, \be \in A$ are consecutive in $S$ or in $A$. If $\al\ll \be$, then there exists $i$ such that $\al \in A_i$ and $\be \in A_{i+1}$. If $\be\in \ds{F}{\al}$ for some $F \subsetneq I$, then there exists $i$ such that either $\al\in A_i$ and $\be\in A_{i+1}$ or $\al,\be\in A_{i}$.
 \end{enumerate}
\end{lem}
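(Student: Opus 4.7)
The plan is to treat the six items in order, working directly from Definition \ref{deflivelli} and the translation $A_i = D^{(N+1-i)}$. For item \ref{prelem1}, I analyze stage $N-i$: an element $\alpha \in D^{(N+1-i)}$ was present there but not removed, so either $\alpha$ failed to be $\ll$-maximal (then some $\delta \gg \alpha$ in the remaining set can be taken $\ll$-maximal, so $\delta \in B^{(N-i)}$; if $\delta \in C^{(N-i)}$, any component of its complete-infimum decomposition would be strictly larger than $\delta$ and still $\gg \alpha$, so in fact $\delta \in D^{(N-i)}=A_{i+1}$), or $\alpha$ itself was $\ll$-maximal and therefore lies in $C^{(N-i)}$, yielding a decomposition $\alpha = \beta^{(1)}\wt\cdots\wt\beta^{(r)}$ with $\beta^{(k)}\in B^{(N-i)}$, and an analogous ascent along $C^{(N-i)}$ replaces any component in $C^{(N-i)}$ by a strictly larger element of $B^{(N-i)}$ until some component lies in $D^{(N-i)}=A_{i+1}$. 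Item \ref{prelem2} is immediate from \ref{prelem1}, and item \ref{prelem3} is a direct $\ll$-maximality observation at stage $N+1-j$, where both $\alpha$ and $\beta$ still lie in the remaining set and $\beta \in B^{(N+1-j)}$ is $\ll$-maximal, so $\alpha \gg \beta$ cannot hold.

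Item \ref{prelem4} is the main obstacle. Assuming $\beta \in A_k$ with $k<i$, the implication $\gamma \gg \beta \Rightarrow \gamma \gg \alpha$ (valid since $\beta \geq \alpha$) together with the maximality of $\alpha \in B^{(N+1-i)}$ forces $\beta \in B^{(N+1-i)}$, and since $\beta \notin A_i$ we must have $\beta \in C^{(N+1-i)}$, giving a decomposition $\beta=\gamma^{(1)}\wt\cdots\wt\gamma^{(r)}$ with $\gamma^{(l)}\in \ds{F_l}{\beta}$ and $\bigcap_l F_l=\emptyset$. Setting $G = \{j : \alpha_j=\beta_j\}\subsetneq I$, a direct check shows that $\gamma^{(l)}\gg \alpha$ precisely when $F_l\cap G=\emptyset$, so the contradiction (violating maximality of $\alpha$) amounts to producing some $l$ with $F_l\cap G=\emptyset$. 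This is the combinatorial kernel of the proof: the plan is to invoke Property (G2) along the indices of $G$ via Proposition \ref{propG2}, extracting an auxiliary element of $S$ whose existence forces one of the sets $F_l$ to avoid $G$. The base case $i=N$ is handled separately using the explicit description of $A_N = \Delta^S(\ga_E)$ from Remark \ref{lastlev}: if $\alpha \in \Delta^S_l(\ga_E)$ and $\beta \geq \alpha$, then $\beta_l > (\ga_E)_l$ together with $\beta_j > (\ga_E)_j$ for $j \neq l$ would force $\beta \geq \bs{c}_E$, putting $\beta \in E$, a contradiction; hence $\beta_l=(\ga_E)_l$ and $\beta \in A_N$.

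Item \ref{prelem5} follows from \ref{prelem4}: applying it to each $\beta^{(k)}\geq \alpha$ gives $\alpha\in A_h$ with $h\leq i$, and the possibility $h=i$ is excluded because it would make $\alpha$ a complete infimum of elements of $B^{(N+1-i)}$, placing it in $C^{(N+1-i)}$ and contradicting $\alpha\in D^{(N+1-i)}$. For item \ref{prelem6}, I apply \ref{prelem1} to $\alpha$ to produce either an element $\gamma\in A_{i+1}$ with $\alpha\ll\gamma$ or a complete-infimum decomposition with a component in $A_{i+1}$; forming $\gamma\wedge\beta\in S$ via (G1) and using the consecutiveness of $\alpha,\beta$ in $S$ (or in $A$) rules out a strict intermediate element, forcing $\gamma\wedge\beta=\beta$, i.e., $\gamma\geq\beta$. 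Combined with \ref{prelem3} and \ref{prelem4} this pins down the level of $\beta$ to exactly $i+1$ in the $\ll$ case, and to either $i$ or $i+1$ in the $\ds{F}{\alpha}$ case; in the complete-infimum subcase a small extra calculation on the $F_l$'s analogous to the one in item \ref{prelem4} is needed to select the component that actually dominates $\beta$.
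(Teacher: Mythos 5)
Your plan for items 1, 2, 3, 5 and 6 is broadly in the spirit of what Definition \ref{deflivelli} allows (the paper itself gives no details here, asserting that 1--5 are immediate and deferring 6 to the two-branch reference), but item 4 --- which you correctly single out as the main obstacle, and on which your proofs of 5 and 6 lean --- contains a genuine gap. You reduce the desired contradiction to ``producing some $l$ with $F_l\cap G=\emptyset$'', where $\be=\g^{(1)}\wt\cdots\wt\g^{(r)}$ with $\g^{(l)}\in\ds{F_l}{\be}$ witnesses $\be\in C^{(N+1-i)}$ and $G=\{j:\al_j=\be_j\}$. Your check that $\g^{(l)}\gg\al$ exactly when $F_l\cap G=\emptyset$ is correct, but such an $l$ need not exist once $d\ge 3$: the complete-infimum conditions force $\wh{F_1},\dots,\wh{F_r}$ to be a partition of $I$ into nonempty blocks, and $F_l\cap G=\emptyset$ means $G\subseteq\wh{F_l}$, i.e.\ $G$ lies inside a single block. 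For $d=2$ this is automatic (which is why the cited $\N^2$ argument goes through), but already for $d=3$ with $G=\{1,2\}$ and the partition $\{1\},\{2\},\{3\}$ (so $F_1=\{2,3\}$, $F_2=\{1,3\}$, $F_3=\{1,2\}$) every $F_l$ meets $G$; moreover all three $\g^{(l)}$ then satisfy $\g^{(l)}\wedge\g^{(m)}=\be\neq\al$, so they neither dominate $\al$ nor exhibit $\al$ as a complete infimum. Your proposed repair --- invoking Property (G2) via Proposition \ref{propG2} along the indices of $G$ --- cannot close this as stated: (G2) manufactures new elements of $S$, whereas the contradiction requires either an element of $A\setminus\bigcup_{j<N+1-i}D^{(j)}$ dominating $\al$ or elements of $B^{(N+1-i)}$ realizing $\al$ as a complete infimum, and nothing forces the elements produced by (G2) to lie in $A$ at all, let alone in $B^{(N+1-i)}$; nor can (G2) alter the index sets $F_l$ of the given decomposition of $\be$. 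Until this configuration is handled, item 4 is unproven, and with it item 5 and the level-pinning in item 6.

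A smaller but real issue concerns item 1: you twice appeal to an ``ascent'' that terminates at an element of $D^{(N-i)}$ (replacing a $\de\in C^{(N-i)}$ dominating $\al$, respectively a component of the decomposition of $\al$, by strictly larger elements of $B^{(N-i)}$). Strictly increasing chains in $A$ need not terminate --- a level can contain an entire infinite set $\ds{k}{\bs{v}}$ --- so $\le$-maximal elements need not exist. For the first occurrence the clean fix is your own item 3: a $\de\in C^{(N-i)}$ with $\al\ll\de$ would eventually land in a level $\le i$, contradicting item 3. For the second occurrence (showing that \emph{some} component of a witnessing decomposition of $\al\in C^{(N-i)}$ actually lies in $D^{(N-i)}$ rather than all components lying in $C^{(N-i)}$) you again need item 4/5-type control, so this step too is not yet closed.
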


\begin{proof}
%\bf MODIFICARE LA DIMOSTRAZIONE DEL PUNTO v USANDO MINIMO COMPLETO DI VARIE COSE!!! \rm \\
1,2,3,4 and 5 follow directly from Definition \ref{deflivelli}. The proof of 6 is analogous to case of good semigroups of $\N^2$ (see \cite[Lemma 1]{DAGuMi}). %We prove 5: \\
%By item \ref{prelem4}, $\de \in A_i\cup\ldots \cup A_N$. If $\de \in A_i$, since 
%$\al=\be\wt\de$, we should have $\al\in A_h$, with $h<i$, which is a contradiction.
%vi) The proof is analogous to case of good semigroups of $\N^2$.\\
%iv) If $\delta_k>\alpha_k$ for all $k\neq j$, then $\al\ll \de$, hence the thesis follows by the definition of levels. Otherwise there exists $l\neqji$ such that $\delta_l=\alpha_l$, thus $\de\in \ds{j}{\al}$. We have that $\al=\be\wedge\de$ and $\wh{i}\cup\wh{j}=I$, this means $\al=\be\wt\de$.
\end{proof}

\medskip

The main goal of this section is to generalize to good semigroups in $\N^d$ the following proposition, very helpful to have some control on the levels of different elements. 

\begin{prop}\label{altro-Ap}\rm (\cite[Lemma 2.3]{type:good}) \it 
Let $S \subseteq \N^2$ be a good semigroup. Let $A \subseteq S$ be such that $E:=S \setminus A$ is a proper good ideal of $S$ and let $A=\bigcup_{i=1}^N A_{i}$ be its partition in levels. The following assertions hold:
\begin{enumerate}
    \item Assume $\al \in S$, $\be \in \ds{j}{\al} \cap A_i$ and $\ds{3-j}{\al} \subseteq A$ (recall that in this case $j \in \{ 1,2 \}$). Let $\te \in \ds{3-j}{\al}$ be consecutive to $\al$ (such element exists by property (G2)). Then $\te \in A_h$ with $h \leq i.$
    \item Assume $\al \in A_i$, $\be \in \dE{j}{\al}$ and let $\te \in \ds{3-j}{\al}$ be consecutive to $\al$. Then $\te \in A_i$.
\end{enumerate}
\end{prop}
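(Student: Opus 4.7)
The plan is to prove both assertions directly from the levels structure of Definition \ref{deflivelli}, making heavy use of Lemma \ref{prelem}, and to take advantage of two features specific to $d=2$: every proper non-empty $\ds{F}{\al}$ is one of the two ``axes'' $\ds{1}{\al}, \ds{2}{\al}$, and any complete infimum in $\N^2$ has exactly two components, one on each axis. The key tools will be Lemma \ref{prelem}.\ref{prelem1} (ascent to the next level either by dominance or by complete-infimum), Lemma \ref{prelem}.\ref{prelem5} (descent of level for a complete infimum), Lemma \ref{prelem}.\ref{prelem6} (consecutive elements lie in the same or adjacent levels), and Remark \ref{lastlev} (characterization of $A_N$).

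For Part 2, the first step is to show $\te \in A$: if instead $\te \in E$, then by (G1) one would have $\al = \be \wedge \te \in E$ (since $\be \in \dE{j}{\al}$ and $\te \in \ds{3-j}{\al}$ force $\be \wedge \te = \al$), contradicting $\al \in A_i$. Since $\te$ is consecutive to $\al$ in $S$ and both lie in $A$, Lemma \ref{prelem}.\ref{prelem6} restricts $\te$ to $A_i \cup A_{i+1}$. The crux is to exclude $\te \in A_{i+1}$; assuming otherwise, I would apply Lemma \ref{prelem}.\ref{prelem1} to $\al \in A_i$. In the complete-infimum subcase $\al = \he^{(1)} \wt \he^{(2)}$, one component of $\al$'s decomposition lies in $A_{i+1}$ on the $(3-j)$-axis of $\al$; comparing it with $\te$ contradicts the consecutivity of $\al$ and $\te$ in $S$. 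In the dominance subcase $\al \ll \he$ with $\he \in A_{i+1}$, I would use $\be \in \dE{j}{\al}$ and property (G2) to build, within level $i+1$, a partner for $\te$ on the $j$-axis of $\al$, producing a complete-infimum expression of $\al$ whose components lie in $A_{i+1}$ and forcing $\al$ into a strictly lower level via Lemma \ref{prelem}.\ref{prelem5}.

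For Part 1, I would split on whether $\al \in A$ or $\al \in E$. If $\al \in A$, then $\be \in \ds{j}{\al} \cap A_i$ and $\te \in \ds{3-j}{\al} \cap A$ give $\al = \be \wt \te$ as a complete infimum in $A$, and Lemma \ref{prelem}.\ref{prelem5} directly prohibits $\te$ from lying in a level strictly above $i$ (else the level of $\al$ would be forced below both indices in a way inconsistent with $\be \in A_i$ combined with an application of Lemma \ref{prelem}.\ref{prelem6} to $\al$ and $\te$). If $\al \in E$, I would iterate Lemma \ref{prelem}.\ref{prelem1} upward from $\be \in A_i$: each step produces, in the next higher level, either an element dominating the previous witness or a complete-infimum partner, and the hypothesis $\ds{3-j}{\al} \subseteq A$ together with (G2) lets me transport each such witness to the $(3-j)$-axis of $\al$. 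Pushing up to $A_N = \Delta(\ga_E)$ by Remark \ref{lastlev}, the resulting chain of witnesses is incompatible with $\te$ lying in any level $A_h$ with $h > i$.

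The main obstacle will be the branching structure of Lemma \ref{prelem}.\ref{prelem1}: every inductive ascent from level $i$ to $i+1$ splits into a dominance subcase and a complete-infimum subcase, and keeping track of which axis the newly produced witnesses land on requires careful bookkeeping. A secondary subtlety in Part 1 is the case $\al \in E$, where the slick descent via Lemma \ref{prelem}.\ref{prelem5} is unavailable and must be replaced by the iterated ascent sketched above; one also has to verify at each ascent step that the witness produced on the $(3-j)$-axis of $\al$ is genuinely strictly above $\te$, which relies on the consecutivity of $\te$ to $\al$ in $\ds{3-j}{\al}$.
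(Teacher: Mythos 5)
First, a framing point: the paper does not prove this proposition at all — it quotes it from \cite[Lemma 2.3]{type:good} and instead proves the $d$-dimensional generalizations, Theorem \ref{bianchi} and Theorem \ref{neri} (the latter resting on Lemma \ref{preblack}). Measured against those proofs specialized to $d=2$, your proposal is missing the key idea: a reverse induction on the level index $i$, starting from $A_N=\Delta(\ga_E)$, in which the statement at level $i$ is deduced from the statement at level $i+1$. Your correct easy steps in Part 2 are $\te\in A$ by (G1) and $\te\in A_i\cup A_{i+1}$ by Lemma \ref{prelem}.\ref{prelem6}; but the complete-infimum subcase is handled wrongly. Writing $\al=\he^{(1)}\wt\he^{(2)}$ with $\he^{(1)}\in\ds{j}{\al}$ and $\he^{(2)}\in\ds{3-j}{\al}$, Lemma \ref{prelem}.\ref{prelem1} does not tell you the $A_{i+1}$-component lies on the $(3-j)$-axis; and even when $\he^{(2)}\in\ds{3-j}{\al}\cap A_{i+1}$ there is no conflict with consecutivity, since $\he^{(2)}$ just sits at or above $\te$ on the same ray, which is perfectly compatible with $\te\in A_{i+1}$. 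The genuinely hard configuration, which you never address, is $\he^{(1)}\in\ds{j}{\al}\cap A_{i+1}$, i.e.\ an $A_{i+1}$-partner on the same axis as $\be\in E$: this is exactly cases \ref{preblack4} and \ref{preblack5} of Lemma \ref{preblack}, whose resolution needs the downward induction plus repeated use of (G2) (Proposition \ref{propG2}) to transport the $A_{i+1}$-witness until it either dominates $\te$ or lands in $\ds{j}{\te}$, at which point Theorem \ref{bianchi}.2 applies. Your dominance subcase also ends in a non-contradiction: a complete-infimum expression of $\al$ with all components in $A_{i+1}$ only gives, via Lemma \ref{prelem}.\ref{prelem5}, that $\al\in A_h$ with $h\leq i$, which is consistent with $\al\in A_i$.

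Part 1 has the same defect. In the subcase $\al\in A$ you invoke Lemma \ref{prelem}.\ref{prelem5}, but that lemma requires all components of the complete infimum to lie in the \emph{same} level, so from $\be\in A_i$ and $\te\in A_{i+1}$ it yields nothing; the configuration $\al,\be\in A_i$, $\te\in A_{i+1}$ is not excluded by any of the facts you cite. It is ruled out only by the argument of Theorem \ref{bianchi}.1: use Lemma \ref{prelem}.\ref{prelem2} to produce $\de\in A_{i+1}$ with $\de\geq\be$, split on whether $\de\wedge\te$ equals $\al$ or $\te$, and in the second case apply Proposition \ref{propG2} and the inductive hypothesis at level $i+1$ to the resulting partners of $\te$, finishing with Lemma \ref{prelem}.\ref{prelem5} only when all of them are forced into $A_{i+1}$. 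Your subcase $\al\in E$ is only a statement of intent ("pushing up to $A_N$ \dots is incompatible") with no mechanism supplied. So the proposal is not a proof: both parts omit the inductive engine that makes the argument work, and the steps offered in its place either do not apply or do not produce the claimed contradictions.
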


The two results of this Proposition \ref{altro-Ap} will be generalized in Theorems \ref{bianchi} and \ref{neri}. We prove the first one (see Figure \ref{fig:white} for a graphical representation in case $d=3$).

\begin{teo}

\label{bianchi}
Let $S$ be a good semigroup, $E \subseteq S$ a good ideal and $A= S \setminus E$.
 Let $\al\in S$, $\be\in \Delta_F^S(\al)\cap A_i$ and assume $\ds{\widehat{F}}{\al}\subseteq A$. Let $\bs{\theta}\in \ds{G}{\al}$ with $\bs{\theta}$ and $\al$ consecutive and $G \supseteq \widehat{F}$.
\begin{enumerate}
\item If $\td{G}{\al} \subseteq A$, then $\bs{\theta}\in A_h$ with $h\leq i$;
\item If $\al\in A$ and $\td{\wh F}{\al}\subseteq A$ then $\al\in A_h$ with $h<i$.
\end{enumerate}

%\begin{comment}
\begin{figure}[H]
\begin{subfigure}{.33\textwidth}
  \centering
 \tdplotsetmaincoords{60}{120}
\begin{tikzpicture}[scale=0.75,tdplot_main_coords]
\draw[thick,->] (0,0,0) -- (3.5,0,0) node[anchor= east]{$2$};
\draw[thick,->] (0,0,0) -- (0,2.8,0) node[anchor= west]{$1$};
\draw[thick,->] (0,0,0) -- (0,0,3) node[anchor=south]{$3$};
\draw [fill=white] (0,0,2) circle[radius= 0.2 em] node[anchor= east]{$\bs{\beta}$} node[anchor= south west]{$i$}; 
\draw plot [mark=*, mark size=2] coordinates{(0,0,0)}  node[anchor= south west]{$\bs{\alpha}$};  
\draw [fill=white] (1.5,1.5,0) circle[radius= 0.2 em]  node[anchor= east]{$\bs{\theta}$} node[anchor= west]{$<i$};     
\end{tikzpicture}
  \caption{\footnotesize{$F=\{1,2\}$, $\wh F=G=\{3\}$}}
  \label{fig:whitesub1}
\end{subfigure}%
\begin{subfigure}{.33\textwidth}
  \centering
 \tdplotsetmaincoords{60}{120}
\begin{tikzpicture}[scale=0.75,tdplot_main_coords]
\draw[thick,->] (0,0,0) -- (3.5,0,0) node[anchor= east]{$2$};
\draw[thick,->] (0,0,0) -- (0,2.8,0) node[anchor= west]{$1$};
\draw[thick,->] (0,0,0) -- (0,0,3) node[anchor=south]{$3$};

\draw [fill=white] (0,0,2) circle[radius= 0.2 em] node[anchor= east]{$\bs{\theta}$} node[anchor=  west]{$<i$}; 
\draw plot [mark=*, mark size=2] coordinates{(0,0,0)}  node[anchor= south west]{$\bs{\alpha}$};  
\draw [fill=white] (1.5,1.5,0) circle[radius= 0.2 em]  node[anchor= east]{$\bs{\beta}$} node[anchor= south west]{$i$};  
\end{tikzpicture}
\caption{\footnotesize{$F=\{3\}$, $\wh F=G=\{1,2\}$}}
\label{fig:whitesub2}
\end{subfigure}
\begin{subfigure}{.33\textwidth}
  \centering
 \tdplotsetmaincoords{60}{120}
\begin{tikzpicture}[scale=0.75,tdplot_main_coords]
\draw[thick,->] (0,0,0) -- (3.5,0,0) node[anchor= east]{$2$};
\draw[thick,->] (0,0,0) -- (0,2.8,0) node[anchor= west]{$1$};
\draw[thick,->] (0,0,0) -- (0,0,3) node[anchor=south]{$3$};

\draw [fill=white] (0,0,2) circle[radius= 0.2 em] node[anchor= east]{$\bs{\beta}$} node[anchor=  west]{$i$}; 
\draw plot [mark=*, mark size=2] coordinates{(0,0,0)}  node[anchor= south west]{$\bs{\alpha}$};  
\draw [fill=white] (0,1.5,0) circle[radius= 0.2 em]  node[anchor= north]{$\bs{\theta}$} node[anchor= south west]{$<i$};  
\end{tikzpicture}
\caption{\footnotesize{$F=\{1,2\}$, $\wh F\subseteq G=\{2,3\}$}}
\label{fig:whitesub3}
\end{subfigure}
\caption{\footnotesize{Two representation of the setting of the theorem in case $d=3$}}
\label{fig:white}
\end{figure}
%\end{comment}

\end{teo}
\begin{proof}
1. We work by reverse induction on $i$. If $i=N$ there is nothing to prove. We assume the thesis true for $t>i$ and we prove it for $i$.
By Lemma \ref{prelem}. \ref{prelem2}, there exists $\de\in A_{i+1}$ such that $\de\geq \be\geq \al$, furthermore $\te\geq \al$, hence $\al\leq \de\wedge\te\leq \te$. The hypothesis of $\al$ and $\te$ consecutive implies that either $\de\wedge\te=\al$ or $\de\wedge\te=\te$.

First we suppose that for every $\de\in A_{i+1}$, with $\de\geq \be$ we have $\de\wedge\te=\al$. Take one of such elements $\de$ and observe that necessarily $\de \in \ds{T}{\be}$ for some $T \supseteq \wh G$, otherwise $\de\wedge\te$ would be equal to $\te$. Moreover, in this case there are no elements $\de$ in the level $A_{i+1}$ such that $\de \gg \be$ and by Lemma \ref{prelem}.\ref{prelem1}, $\be$ is a complete infimum of some elements in $A$ and we can include $\de$ among these elements.  %definition of levels, 
Hence there exists $\om\in A\cap \ds{H}{\be}$ for some $H \supseteq \wh T$ and we may also choose $\om$ consecutive to $\be$ in $A$ and such that $\wh G \nsubseteq H$.  
It follows that there exists $j \in \wh G \setminus H$ and thus $\te_j > \al_j$ and $\om_j > \be_j \geq \al_j$. This implies $\te \wedge \om = \te$ and hence $\om \in A_{i}$ since we assumed that no element satisfying the same property was in $A_{i+1}$. The thesis follows since now $\te \leq \om$.
%such that $\be=\om\wedge \de$ \bf da provare\rm. 
%Furthermore $\om \in A_{i}$ since, from the fact that $\om \in\ds{H}{\be}$, we should have $\om\wedge \te=\te$ ($\al$ and $\te$ are consecutive) and we assumed no elements in $A_{i+1}$ satisfies this property. This fact implies the thesis since $\te \leq \om$. \bf controllare! \rm 
\\
As second case, we can suppose that there exists $\de\in A_{i+1}$ with $\de\geq \be$, such that $\de\wedge \te=\te$; in particular $\de\geq \te$.
If $\de \gg \te$, by definition of levels (see Lemma \ref{prelem}.\ref{prelem3}), $\te\in A_h$ with $h\leq i$ and this concludes the proof. 
%\bf TUTTO IL RAGIONAMENTO CHE FACCIAMO DOPO SU COME COSTRUIRE IL MINIMO COMPLETO VA USATO QUI, NON E' DETTO CHE THETA SIA MINIMO COMPLETO SOLO DI DUE...HO CREATO UN LEMMA A PARTE QUI SOPRA\rm. 

Otherwise there exists $H\subseteq F$ such that $\de\in \ds{H}{\te}$. %(magari inserisci un remark con questa proprietà. 
Now observe that if $\eps \in \td{\wh F}{\te}$ and $\eps_i= \al_i$, then $i \in G$.
Hence, as consequence of Remark \ref{remdelta}, $$\td{\widehat{H}}{\te} \subseteq \td{\wh F}{\te} \subseteq \ds{\widehat{F}}{\al} \cup \td{G}{\te} \subseteq A, $$ %(recall that $G \supseteq \wh F$ and use Remark \ref{remdelta}) 
and,
applying %Lemma \ref{minimocompleto} 
Proposition \ref{propG2} 
to $\te$ and $\de$ (choosing the good ideal in that proposition to be $S$ itself), we obtain that $\te$ can be expressed as complete infimum of $\de$ and other elements $\om^1, \ldots, \om^r \in \td{\widehat{H}}{\te} \subseteq A$. %Let $\om$ be one of such elements. 
Without loss of generality we may assume that for every $j=1,\ldots, r$, $\om^j$ and $\te$ are consecutive and, applying the inductive hypothesis on $\de, \te, \om^j$, we get $\om^j \in A_h$ with $h \leq i+1$. If for some $j$ we have $h \leq i$, we are done since $\te \leq \om^j$, otherwise, we must have $\om^1, \ldots, \om^r \in A_{i+1}$ and therefore $\te$ is a complete infimum in $A$ of elements of $A_{i+1}$. Hence, by Lemma \ref{prelem}.\ref{prelem5}, $\te$ must be contained in a level $A_h$ for $h \leq i$. \\ %\\
%\bf DOBBIAMO CAPIRE SE QUESTA CONCLUSIONE VA BENE O SE BISOGNA RICHIAMARE IL PRELEM \rm \\
%we conclude the proof using Lemma \ref{prelem}. \ref{prelem5}. \\
%By (G2), there exists $\om \in \td{\widehat{H}}{\te} \subseteq \td{\wh F}{\te} \subseteq \ds{\widehat{F}}{\al} \cup \td{G}{\te} \subseteq A$ (recall that $G \supseteq \wh F$ and use Remark \ref{remdelta}). Applying the inductive hypothesis on $\de, \te, \om$, we get $\om \in A_h$ with $h \leq i+1$. 
%If $h \leq i$, we are done since $\te \leq \om$, otherwise we applying Lemma \ref{minimocompleto} to $\te$ and $\de$, using the fact that $\om \in \td{\widehat{H}}{\te} \subseteq A$, in order to express $\te$ as complete infimum of elements of 
%we use the following procedure to show that $\te$ can be express as complete infimum of elements in $A_{i+1}$ (two of these elements will be $\de$ and $\om$) and we conclude the proof using Lemma \ref{prelem}. \ref{prelem5}. \\
%If it is already the case that $\te = \de \wt \om$ we can stop here, otherwise 
%get anyway $\te= \de \wt \om \in A_s$ with $s \leq i$ by 
2. By Proposition \ref{propG2} applied to $\al$ and $\be$ (again with respect to the good ideal $S$), the element $\al$ can be expressed as complete infimum of $\be$ and other elements in $\te^1, \ldots, \te^r \in \td{\widehat{F}}{\al} \subseteq A$. For every $j=1, \ldots, r$ we assume without loss of generality that $\al$ and $\te^j$ are consecutive. Applying the first part of this lemma, we get $\te^j \in A_h$ with $h \leq i$. As observed previously, if for some $j$ we have $h < i$, we are done since $\al \leq \te$, otherwise $\te^1, \ldots, \te^r \in A_{i}$ and $\al$ is a complete infimum in $A$ of elements of $A_{i}$. This implies the thesis 
again using Lemma \ref{prelem}.\ref{prelem5}. 
%Now we suppose that $\te\in \ds{G}{\al}$ with $\wh{F}\subset G$. %By the first statement $\te \in A_h$, with $h\leq i$.
%We notice that, if $\te \in A_h$ with $h<i$, then $\al\in A_h$ with $h<i$. Hence we can suppose $\te \in A_i$.
%In this case, by property (G2) applied on the elements $\te$ and $\be$, there exists an element $\om \in \td{\wh{G\cap F}}{\al}$.
%We notice that $\wh{G\cap F}=\wh{G}\cup\wh{F}$, then, by Remark \ref{remdelta}, $\td{\wh{G\cap F}}{\al}\subseteq \td{\wh{F}}{\al}\subseteq A$ hence, again by the first statement, $\om\in A_h$, with $h\leq i$. We have $\al=\te\wedge\be\wedge\om$ and we can suppose again that $\om \in A_i$.
%If $\om \in \ds{\wh{G\cap F}}{\al}$ then $(G\cap F) \cup \wh F \cup \wh G= (G\cap F) \cup \wh {G\cap F}=I$, hence $\al$ is a complete infimum.
%Otherwise there exists $H\supset \wh{G\cap F}$ which imply $\wh H \cup (\wh{F\cap G})\subset I$. Repeating this construction, we can create a set of element in $A_i$ such that $\al$ is a complete infimum. And this imply that $\al\in A_h$ with $h<i$.
%To prove the "moreover statement", we observe that by property (G2) there exists $\om\in \td{\widehat{F}}{\al}$, furthermore we can always suppose that $\al$ and $\om$ are consecutive. Applying the first statement, we obtain that $\om \in A_j$, with $j\leq i$.  If $\om\in A_{i}$, then $\al=\be \wt \om$, hence by Remark QUALCOSA $\al \in A_h$, with $h < i$, otherwise the thesis is clear since $\al \leq \om$.
\end{proof}

Before proving the second important result of this section we need to discuss a couple of extra properties. The first one shows how the use of Property (G2) in order to find elements in $\dE{}{\al}$ works when passing from $\al$ to some $\be \in \dE{F}{\al}$.

\begin{prop}
\label{riferimento}
Let $E \subseteq S$ be a good ideal and let $\al \in S$. Assume there exists $\be \in \dE{F}{\al}$ and that $\dE{H}{\al}$ is non-empty for some $H \subsetneq F$. Then there exists $T \subsetneq F$ such that $T \supseteq (F \setminus H)$ and $\dE{T}{\al} \neq \emptyset.$
\end{prop}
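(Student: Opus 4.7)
The plan is to extract the required $T$ together with a witness element $\eps\in\dE{T}{\al}$ by invoking property (G2) of the good ideal $E$ on the pair $\be\in\dE{F}{\al}$ and $\de\in\dE{H}{\al}$ provided by the hypothesis. The key preliminary observation is that, since $H\subseteq F$, for every index $i\in H$ we have $\beta_i=\alpha_i=\delta_i$, so $\be$ and $\de$ agree at each coordinate of $H$; moreover, since $H\subsetneq F$ there exists $j\in F\setminus H$ with $\beta_j=\alpha_j<\delta_j$, so $\be\ne\de$. This sets (G2) up perfectly.

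Next I would pick any index $i\in H$ and apply property (G2) to $\be$ and $\de$ inside the good ideal $E$ with respect to the common coordinate $i$. This produces $\eps\in E$ satisfying $\epsilon_i>\beta_i=\delta_i=\alpha_i$, together with $\epsilon_j\geq\min(\beta_j,\delta_j)$ for every $j\neq i$ and, crucially, $\epsilon_j=\min(\beta_j,\delta_j)$ whenever $\beta_j\neq\delta_j$.

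I would then define $T:=\{j\in I:\epsilon_j=\alpha_j\}$ and verify the three required properties by a coordinate check. If $j\notin F$, then $\beta_j,\delta_j>\alpha_j$, so $\epsilon_j\geq\min(\beta_j,\delta_j)>\alpha_j$ and $j\notin T$; this gives $T\subseteq F$. If $j\in F\setminus H$, then $\beta_j=\alpha_j\neq\delta_j$, and the equality clause of (G2) forces $\epsilon_j=\min(\beta_j,\delta_j)=\alpha_j$, so $j\in T$; this gives $T\supseteq F\setminus H$. Finally, $i\in H\subseteq F$ but $\epsilon_i>\alpha_i$ by construction, so $i\in F\setminus T$, which ensures $T\subsetneq F$. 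Since $\eps\in E$ with $\epsilon_j=\alpha_j$ exactly on $T$, the defining conditions of $\dE{T}{\al}$ are satisfied, so $\eps\in\dE{T}{\al}$ and this set is non-empty.

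I do not foresee a real obstacle: the argument is essentially a direct unpacking of (G2). The only point worth emphasizing is that (G2) must be invoked for the good ideal $E$ itself, not merely for the ambient semigroup $S$, so that the output $\eps$ lies in $E$; this is legitimate by the axiomatic definition of good ideal. One also tacitly uses that $H$ is non-empty to choose the index $i$, which is the only case where the conclusion is substantive (if $H=\emptyset$, then $F\setminus H=F$ and $T\subsetneq F$, $T\supseteq F$ are jointly vacuous).
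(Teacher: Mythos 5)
Your proof is correct, but it follows a genuinely more direct route than the paper's. The paper first replaces $\be$ by a minimal element of $\dE{F}{\al}$, uses property (G1) (via Lemma 1.6.1) to deduce $\be \leq \de$ and to rewrite $\de \in \dE{U}{\be}$ for a suitable $U$ with $U \cap F = H$, and then invokes the refined form of (G2) packaged in Proposition 1.8 to produce an element $\he \in \dE{V}{\be}$ with $V \supseteq \wh{U}$, from which $T \supseteq F \cap V \supseteq F \setminus H$ is read off. You instead apply the bare axiom (G2) once, inside $E$, to the pair $(\be,\de)$ at a coordinate $i \in H$ where they agree, and define $T$ as the agreement set of the resulting $\eps$ with $\al$; the coordinate check ($j \notin F$ gives $\epsilon_j > \alpha_j$; $j \in F\setminus H$ forces $\epsilon_j = \alpha_j$ by the equality clause of (G2); $i \notin T$ gives $T \subsetneq F$) is complete and correct, and since $\eps \geq \al$ componentwise the membership $\eps \in \dE{T}{\al}$ follows. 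Your argument is shorter and avoids both the minimality normalization and Proposition 1.8; the paper's version has the mild advantage of staying within the complete-infimum formalism used throughout the rest of the article. Your closing remark about $H \neq \emptyset$ is well taken (with $H=\emptyset$ the conclusion would demand $T \subsetneq F$ and $T \supseteq F$ simultaneously, so the statement implicitly assumes $H$ non-empty, as it is in all the applications), though strictly speaking that case is contradictory rather than vacuous.
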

\begin{proof}
Let $\de \in \dE{H}{\al}$ and assume $\be$ to be minimal in $\dE{F}{\al}$. Since $H \subsetneq F$, and by minimality of $\be$ together with Lemma \ref{minimidelta}.1, we get $\be \leq \de$ and therefore $\de \in \dE{U}{\be}$ for some $U \supseteq H$ such that $U \cap F = H$ %$(U \setminus H) \subseteq \wh F$ 
(observe that for $j \in H$, $\de_j= \al_j= \be_j$ while for $j \in F \setminus H$, $\de_j > \al_j=\be_j$). Applying Property (G2) to $\de$ and $\be$ as in Proposition \ref{propG2}, we can find $\he \in \dE{V}{\be}$ for some $V \supseteq \wh U$ such that $V \nsupseteq F$. Clearly $\he \geq \be \geq \al$ and $\he_j= \be_j = \al_j$ if $j \in F \cap V $ (which is non empty since $F \nsubseteq U$).
Hence $\he \in \dE{T}{\al}$ such that $$T \supseteq F \cap V \supseteq F \cap \wh U \supseteq (F \setminus H). $$
Furthermore, for $j \in (F \setminus V)$, $\he_j > \be_j = \al_j$ and for $j \not \in F$, $\he_j \geq \be_j > \al_j$. This implies $T \subsetneq F$.
\end{proof}

The proof of Theorem \ref{neri} is strongly based on next lemma.

\begin{lem}
\label{preblack}
Let $S$ be a good semigroup, $E \subseteq S$ a good ideal and $A= S \setminus E$.
Let $\al\in A_i$ and assume $ \Delta_F^E(\al) \neq \emptyset$. %and $\td{\widehat{F}}{\al}\subseteq A$. 
Let $\te \in \td{\widehat{F}}{\al}$ with $\te$ and $\al$ consecutive. Then $\te \in A_i$ if one of the following conditions is satisfied:
\begin{enumerate}
    \item \label{preblack1} There exists $\de \in A_{i+1}$ such that $\de \gg \te$. %then $\te \in A_i$.
    \item \label{preblack2} There exists $\de \in A_{i+1}$ such that $\de \in \ds{H}{\te}$ with $H \subseteq F$. % then $\te \in A_i$.
    \item \label{preblack3} There exists $\de \in A_i \cap \Delta_F^S(\al)$. %then $\te \in A_i$.
    \item \label{preblack4} There exists $\de \in A_{i+1} \cap \Delta_F^S(\al)$ such that $\de \leq \be \in \tdE{F}{\al}$. % then $\te \in A_i$. 
    \item \label{preblack5} There exists $\de \in A_{i+1} \cap \Delta_F^S(\al)$ such that $\tdE{F}{\de}= \emptyset$.
\end{enumerate}
\end{lem}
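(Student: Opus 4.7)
The plan is first to show $\te \in A$ independently of the auxiliary condition, and then to rule out $\te \in A_{i+1}$ case by case. For the preliminary step, pick any $\be \in \dE{F}{\al}$ (which exists by hypothesis). A coordinate check gives $\be \wedge \te = \al$: on $j \in F$, $\be_j = \al_j$ and $\te_j \geq \al_j$; on $j \in \wh F$, $\te_j = \al_j$ and $\be_j > \al_j$. If $\te$ lay in $E$, property (G1) for the good ideal $E$ would force $\al = \be \wedge \te \in E$, contradicting $\al \in A_i$. So $\te \in A$. Since $\te$ is consecutive to $\al$ in $S$ and lies in $\ds{H}{\al}$ for some $H \supseteq \wh F$ with $H \neq I$, Lemma \ref{prelem}.\ref{prelem6} gives $\te \in A_i$ or $\te \in A_{i+1}$, so it suffices to rule out the second possibility under each hypothesis.

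Case 1 is immediate from Remark \ref{lastlev}: if $\te \in A_{i+1}$ and $\te \ll \de \in A_{i+1}$, then $\de$ would lie in a level strictly above $i+1$, a contradiction. Case 3 follows by direct application of Theorem \ref{bianchi}(1), with $\de \in A_i \cap \ds{F}{\al}$ in the role of $\be$. The side conditions $\ds{\wh F}{\al} \subseteq A$ and $\td{H}{\al} \subseteq A$ are verified by the same infimum trick used in the preliminary step: any hypothetical element of $E$ in these sets would, when met with $\be \in \dE{F}{\al}$, produce $\al \in E$. The theorem then yields $\te \in A_h$ with $h \leq i$, forcing $\te \in A_i$.

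Cases 2, 4, and 5 are reduced to Cases 1 or 3 using property (G2), through Proposition \ref{propG2} (applied with $E = S$) and Proposition \ref{riferimento}. The common scheme is: from the given $\de \in A_{i+1}$ and its relation to $\te$, $\al$, or $\be$, locate an auxiliary element in $A_{i+1}$ that either strictly dominates $\te$ (giving Case 1) or lies in $\ds{F}{\al} \cap A_i$ (giving Case 3). In Case 2 I would write $\te$ as a complete infimum $\te = \de \wt \he^{(1)} \wt \cdots \wt \he^{(r)}$ with $\he^{(k)} \in \ds{T_k}{\te}$, $T_k \supseteq \wh H$, and $\bigcap T_k = \wh H$; by the infimum trick each $\he^{(k)} \in A$, and combining their level positions with Lemma \ref{prelem}.\ref{prelem4} and Lemma \ref{prelem}.\ref{prelem5} forces $\te$ into a level strictly below $i+1$. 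In Case 4, the sandwich $\de \leq \be \in \tdE{F}{\al}$ combined with (G2) applied to $(\de,\be)$ produces an element of $S$ strictly dominating $\te$ coordinatewise; in Case 5, the vanishing $\tdE{F}{\de} = \emptyset$ ensures that every element of $\td{F}{\de}$ lies in $A$, and a (G2) chase starting from $\de$ locates one in $A_{i+1}$ strictly above $\te$. The main obstacle is Cases 4 and 5: the ideal-theoretic conditions on $\be$ or on $\tdE{F}{\de}$ must be threaded through (G2) while keeping careful track of which coordinates are equal, strictly greater, or free, and one must argue that the auxiliary element produced is genuinely in the correct level $A_{i+1}$ and not higher.
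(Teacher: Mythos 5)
Your preliminary step and Cases \ref{preblack1} and \ref{preblack3} are correct and coincide with the paper's argument (the paper obtains $\te \in A$ by citing Remark \ref{rem1} rather than redoing the infimum computation, and Case \ref{preblack3} is indeed a one-line application of Theorem \ref{bianchi}). The trouble begins in Case \ref{preblack2}: you write $\te = \de \wt \he^{(1)} \wt \cdots \wt \he^{(r)}$ and invoke Lemma \ref{prelem}.\ref{prelem4} and Lemma \ref{prelem}.\ref{prelem5} to push $\te$ strictly below level $i+1$, but Lemma \ref{prelem}.\ref{prelem5} requires \emph{all} the elements of the complete infimum to lie in the \emph{same} level $A_{i+1}$, and nothing in your sketch prevents some $\he^{(k)}$, which is merely known to lie in $A$ above $\te$, from sitting in $A_{i+2}$ or higher (Lemma \ref{prelem}.\ref{prelem4} only gives a lower bound). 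The missing upper bound is exactly what Theorem \ref{bianchi} supplies; the paper disposes of Case \ref{preblack2} directly by observing $\td{\wh H}{\te} \subseteq \td{\wh F}{\al} \subseteq A$ and applying Theorem \ref{bianchi}(2) to $\te$ and $\de$.

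The genuine gap is in Cases \ref{preblack4} and \ref{preblack5}. You correctly identify that the whole difficulty is to certify that the auxiliary element produced by the (G2) chase lies in $A_{i+1}$ ``and not higher,'' but you supply no mechanism for doing so. The paper's mechanism is a reverse induction on $i$ (base case $i=N$ trivial): the element $\om$ obtained from Proposition \ref{propG2} applied to $\de$ and $\be \in \dE{U}{\de}$ is chosen consecutive to $\de \in A_{i+1}$ inside $\td{\wh U}{\de} \subseteq A$, and its level is pinned to $i+1$ by the inductive hypothesis, that is, by the statement of Lemma \ref{preblack} itself applied one level up. Without this induction the argument does not close. Two further inaccuracies: the auxiliary element need not dominate $\te$ strictly --- by construction it satisfies $\om \geq \te$ with $\om_j = \te_j$ possible only for $j \in F$, so the reduction is to Case \ref{preblack1} \emph{or} Case \ref{preblack2}, not to ``Case 1 or Case 3'' as your common scheme states; and in Case \ref{preblack5} a (G2) chase cannot start ``from $\de$'' as an element of the ideal, since $\de \in A$ --- the paper must first pass to an element $\be \in \dE{F}{\al}$ consecutive to and below $\de$, exploit $\dE{U}{\be}=\emptyset$ together with Proposition \ref{usodiG2}, and alternate (G2) for $E$ and for $S$ (producing the elements $\bs{\tau}$ and $\bs{\rho}$) before reaching an element to which the inductive hypothesis and then Case \ref{preblack2} apply.
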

\begin{proof}
First of all, we observe that under these assumptions, by Remark \ref{rem1}, $\td{\widehat{F}}{\al} \subseteq A$. Furthermore, say that $\te \in \ds{G}{\al}$ for some $G \supseteq \wh F$. We discuss each case separately. \\ %since if $i=N$ there is nothing to prove independently of conditions 1-2-3-4, we can work by reverse induction on $i$. Hence assume that for $t > i$, $\al \in A_t$ and $F$ and $\te$ chosen as above, we can prove that $\te \in $ %the thesis true for $t>i$ and we prove it for $i$.
1. In this case the thesis follows immediately by Lemma \ref{prelem}.\ref{prelem3}.\\
2. Given $\eps \in \td{\widehat{H}}{\te}$, notice that for $i \in \wh H$, $\eps_i= \te_i$. 
Hence for $i \in \wh F \subseteq \wh H$, $\eps_i= \te_i= \al_i$.
It follows that $\td{\widehat{H}}{\te} \subseteq \td{\widehat{F}}{\al} \subseteq A$
and we can apply the second statement of Theorem \ref{bianchi} to $\te$ and $\de$ to argue that $\te \in A_i$ (it cannot stay in a lower level since $\te \geq \al$).\\
3. We get the thesis applying Theorem \ref{bianchi} to $\de, \al, \te$. \\
4. We work by reverse induction on $i$. If $i=N$ the thesis is clear, hence assume it to be true for any $t>i$ and prove it for $i$. 
In this case we want to find an element $\om \in A_{i+1}$ such that $\om \geq \te$ and if $\om_i = \te_i$ then $i \in F$. Hence such element $\om$ will satisfy the assumptions of \ref{preblack1} or \ref{preblack2}.
Let $\be \in \dE{U}{\de}$ with $U \supseteq F$ and apply property (G2) to $\be$ and $\de$ following Proposition \ref{propG2}. Thus there exist some elements $\om^1, \ldots, \om^k \in S$ with $1 \leq k < d$ such that $\om^i \in \ds{V_i}{\de} $ and $V_1 \cap V_2 \cap \cdots \cap V_k = \wh U$. Since $\wh U \subseteq \wh F \subseteq G$, there exists $i$ such that $\wh V_i \cap \wh G \neq \emptyset$. Set $V:=V_i$ and $\om=\om^i$. By Remark \ref{rem1}, $\td{\widehat{U}}{\de} \subseteq A$ and hence, since $V \supseteq \wh U$, $\om \in A $ and we may assume without restrictions $\om$ and $\de$ to be consecutive. Applying the inductive hypothesis to $\de$ and $\om$, we get $\om \in A_{i+1}$. \\
Now we show $\om \geq \te$. Clearly $\om \geq \de \geq \al$ and thus $\om \wedge \te \geq \al$. But for our choice of $V$, we can find $j \in \wh V \cap \wh G$ and therefore $\om_j > \de_j \geq \al_j$ and $\te_j > \al_j$. It follows that $\om \wedge \te > \al$ and hence, since $\al$ and $\te$ are consecutive, $\om \wedge \te = \te$  and $ \te \leq \om.$ Moreover, if $i \in \wh F \subseteq G$, $\om_i \geq \de_i > \al_i= \te_i$ and hence if $\om_i = \te_i$, then $i \in F$. \\
5. In this case, eventually changing our choice of $\de$, we can find $\be \in \dE{F}{\al}$ such that $\be \leq \de$ and they are consecutive (if there is some element in $A_i$ between them, we conclude using \ref{preblack3}). %there are no other elements between $\be$ and $\de$ (if there is some element in $A_i$ between them we conclude using Case 3).%there are no other elements of $E$ between $\be$ and $\de$. 

Hence, there exists $U \supseteq F$ such that $\de \in \ds{U}{\be}$ and furthermore $\Delta_U(\be)= \Delta_U(\de) \cup \lbrace \de \rbrace$ implying $\dE{U}{\be}= \emptyset$. Lemma \ref{minimidelta}.2 implies that, if $T \supsetneq U$, then $\dE{T}{\be} \subseteq \ds{T}{\be} = \emptyset$ and, by Proposition \ref{usodiG2}, it follows that $\dE{\wh U}{\be} = \emptyset$.
% and there exists $\he \in \dE{T}{\be}$, then by Lemma \ref{minimidelta}(1), $\de \wedge \he \in \ds{T}{\be}$ and %hence it is in $A$ since there are no elements of $E$ between $\be$ and $\de$. This new element $\de \wedge \he \in A_j$ with $i \leq j \leq i+1$ and therefore it is possible to apply Case 3 or Case 4 to it and $\he$.
%For this reason we may restrict to assume $\dE{T}{\be} = \emptyset$ for every $T \supseteq U$.
Using Property (G2) on $\de$ and $\be$ and the same procedure used in \ref{preblack4}, we can find $\om \in \ds{V}{\be}$ with $V \supseteq \wh U$ and $\wh V \cap \wh G\neq \emptyset$. In particular %$\om$ and $\be$ are consecutive, 
$\om \geq \te$ and if $\om_i = \te_i$ then $i \in F$. 
In the case in which $\ds{V}{\be} \subseteq A$, assuming $\om$ and $\be$ to be consecutive and since also $\ds{\wh U}{\be} \subseteq A$, it is possible to apply Theorem \ref{bianchi} to $\de, \be, \om$ to get $\om \in A_j $ with $i \leq j \leq i+1$ and conclude using \ref{preblack2}. %In the case in which $\ds{V}{\be} \subseteq A$
%In the case in which $\om \in A_j $ with $i \leq j \leq i+1$, we can conclude as in \ref{preblack4}. In the case in which $\ds{V}{\be} \subseteq A$

Hence, to complete this part of the lemma, we can suppose that $\dE{V}{\be} \neq \emptyset$ and that without loss of generality $\om \in E$. %$\om \in \dE{V}{\be}$.
By Proposition \ref{propG2}, applied now to $\be$ and $\om$, we find that there exist $\bs{\tau^1},\ldots, \bs{\tau^k}\in \dE{W_i}{\be}$ with $W_i\supseteq \wh V$ and $W_1\cap\cdots\cap W_t=\wh V$. Since $\wh U \subseteq V$, there exists an index $j\in\{1,\ldots,k\}$ such that $\wh W_j\cap \wh U\neq \emptyset$. Set $W:=W_j$ and $\bs{\tau}:=\bs{\tau^j}$. %Furthermore, notice that $\emptyset \neq \wh V\cap \wh G\subseteq W\cap \wh G$ and therefore $W\cap \wh G\neq \emptyset$. Also 
Observe that $\be \leq \bs{\tau}\wedge \de\leq \de$ and furthermore, since $\wh U \cap \wh W\neq \emptyset$, %then $U\cup W\subsetneq I$ and hence, by Lemma \ref{prelem}.\ref{prelem1},
there exists $l \in I \setminus (U\cup W)$ and hence $\bs{\tau}_l, \de_l > \be_l$. It follows
$\bs{\tau}\wedge \de \neq \be$ and, since $\be$ and $\de$ are consecutive, necessarily $\bs{\tau}\wedge \de=\de$ and hence $\bs{\tau} \geq \de \geq \be$. 

%Since $W,U\supseteq \wh V$, it is clear that $\bs{\tau}\in \dE{W}{\be}\subseteq \tdE{\wh V}{\be}$ and $\bs{\de}\in \ds{U}{\be}\subseteq \td{\wh V}{\be}$.
Moreover, for every $i\in W$, $\tau_i=\delta_i=\be_i$ and thus %$\bs{\tau}\in \tdE{\wh V}{\de}$.
%Again, by $\tdE{\wh V}{\de}$, there exists $K\supseteq \wh V$ such that 
$\bs{\tau}\in \dE{K}{\de}$ for some $K\supseteq W \supseteq \wh V$. 

Applying again Property (G2) on $\bs{\tau},\de$, following Proposition \ref{propG2} (and choosing $S$ itself as good ideal), we find $\bs{\rho^1},\ldots, \bs{\rho^t}\in \ds{Z_i}{\de}$ with $Z_i\supseteq \wh K$ and $Z_1\cap\cdots\cap Z_t=\wh K$. Since $$\emptyset \neq \wh V \cap \wh G \subseteq  K \cap \wh G \subseteq K,$$ again as before, there exists an index $j\in \{1,\ldots, t\}$ such that $(K \cap \wh G) \cap \wh Z_j \neq \emptyset$ %such that $\wh V\cap \wh Z_j\neq \emptyset$ (\bf forse in questo caso non serve e va bene qualsiasi indice \rm) 
and we set $Z:=Z_j$ and $\bs{\rho}:=\bs{\rho^j}$. Since $Z\supseteq \wh K$, we get $\bs{\rho}\in \ds{Z}{\de}\subseteq \td{\wh K}{\de}\subseteq A$, where the last inclusion follows by Remark \ref{rem1} since $\bs{\tau} \in E$. Applying the inductive hypothesis on $\bs{\tau},\de, \bs{\rho}$ we conclude that $\rho\in A_{i+1}$. 
We want to prove that $\bs{\rho}$ is an element satisfying the assumptions of \ref{preblack2}. 
First we show $\te\leq \bs{\rho}$. 
Notice that $\al \leq \de \leq \bs{\rho}$ and hence $\al \leq \bs{\rho}\wedge \te \leq \te$. 
For our choice of $Z$, there exists $l \in \wh G \cap \wh Z \neq \emptyset$ and hence $\te_l > \al_l$ and $\bs{\rho}_l > \de_l \geq \al_l$, implying $\bs{\rho}\wedge \te \neq \al$.
%Since $\rho \in \td{\wh K}{\de}$ and $\te\in \ds{G}{\al}$, by Lemma \ref{minimidelta}.1, $\bs{\rho}\wedge \bs{\te} = \al$ if and only if $\wh K \cup G=I$. But $\emptyset \neq \wh V \cap \wh G \subseteq  K \cap \wh G$, hence $\bs{\rho}\wedge \te \neq \al$. 
Since $\al$ and $\te$ are consecutive, $\bs{\rho}\wedge \te=\te $ and $ \te \leq \bs{\rho}$. 

Now we prove that if $\rho_i=\te_i$ then $i\in F$. Indeed, if $i\in \wh F \subseteq G $, we would have $\rho_i\geq \delta_i\geq \beta_i>\alpha_i=\theta_i$. This proves our claim and conclude the proof of this lemma. %Hence $\rho$ it is an element that satisfies the same assumptions of $\om$ in Lemma $\ref{preblack}.\ref{preblack4}$, then we conclude as in that case.
\end{proof}

We are finally ready to generalize the second part of Proposition \ref{altro-Ap} (see Figure \ref{fig:black} for a graphical representation of the theorem in case $d=3$).
\begin{teo}
\label{neri} 
%Let $\al\in A_i$ and assume $ \Delta_F^E(\al) \neq \emptyset$. %and $\td{\widehat{F}}{\al}\subseteq A$. 
%Then, for any $\te \in \td{\widehat{F}}{\al}$ with $\te$ and $\al$ consecutive, $\te \in A_i$.
Let $S$ be a good semigroup, $E \subseteq S$ a good ideal and $A= S \setminus E$.
Let $\al\in A_i$ and let $\te \in \ds{G}{\al}$ consecutive to $\al$. Assume that $\tdE{\wh G}{\al} \neq \emptyset$. Then $\te \in A_i$. 
%DISEGNO
%\begin{comment}
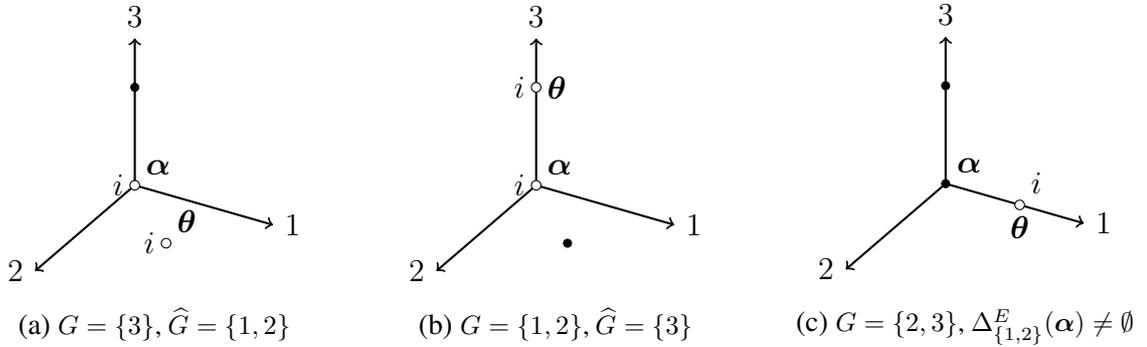
\begin{figure}[H]
\begin{subfigure}{.33\textwidth}
  \centering
 \tdplotsetmaincoords{60}{120}
\begin{tikzpicture}[scale=0.75,tdplot_main_coords]
\draw[thick,->] (0,0,0) -- (3.5,0,0) node[anchor= east]{$2$};
\draw[thick,->] (0,0,0) -- (0,2.8,0) node[anchor= west]{$1$};
\draw[thick,->] (0,0,0) -- (0,0,3) node[anchor=south]{$3$};
\draw plot [mark=*, mark size=2] coordinates{(0,0,2)};% node[anchor= west]{$\beta$}; 
\draw [fill=white] (1.5,1.5,0) circle[radius= 0.2 em]  node[anchor= south west]{$\bs{\theta}$} node[anchor= east]{$i$};  
\draw [fill=white] (0,0,0) circle[radius= 0.2 em]  node[anchor= south west]{$\bs{\alpha}$} node[anchor= east]{$i$};    
\end{tikzpicture}
  \caption{\footnotesize{$G=\{3\}$, $\wh G=\{1,2\}$}}
  \label{fig:blacksub1}
\end{subfigure}%
\begin{subfigure}{.33\textwidth}
  \centering
 \tdplotsetmaincoords{60}{120}
\begin{tikzpicture}[scale=0.75,tdplot_main_coords]
\draw[thick,->] (0,0,0) -- (3.5,0,0) node[anchor= east]{$2$};
\draw[thick,->] (0,0,0) -- (0,2.8,0) node[anchor= west]{$1$};
\draw[thick,->] (0,0,0) -- (0,0,3) node[anchor=south]{$3$};

\draw plot [mark=*, mark size=2] coordinates{(1.5,1.5,0)}; %node[anchor= west]{$\beta$}; 
\draw [fill=white] (0,0,2) circle[radius= 0.2 em]  node[anchor= west]{$\bs{\theta}$} node[anchor= east]{$i$};  
\draw [fill=white] (0,0,0) circle[radius= 0.2 em]  node[anchor= south west]{$\bs{\alpha}$} node[anchor= east]{$i$};    
\end{tikzpicture}
\caption{\footnotesize{$G=\{1,2\}$, $\wh G=\{3\}$}}
  \label{fig:blacksub2}
\end{subfigure}
\begin{subfigure}{.33\textwidth}
  \centering
 \tdplotsetmaincoords{60}{120}
\begin{tikzpicture}[scale=0.75,tdplot_main_coords]
\draw[thick,->] (0,0,0) -- (3.5,0,0) node[anchor= east]{$2$};
\draw[thick,->] (0,0,0) -- (0,2.8,0) node[anchor= west]{$1$};
\draw[thick,->] (0,0,0) -- (0,0,3) node[anchor=south]{$3$};

\draw plot [mark=*, mark size=2] coordinates{(0,0,2)};  
\draw plot [mark=*, mark size=2] coordinates{(0,0,0)}  node[anchor= south west]{$\bs{\alpha}$};  
\draw [fill=white] (0,1.5,0) circle[radius= 0.2 em]  node[anchor= north]{$\bs{\theta}$} node[anchor= south west] at (0,1.5,0){$i$};  
\end{tikzpicture}
\caption{\footnotesize{$G=\{2,3\}$, $\dE{\{1,2\}}{\al} \neq \emptyset$}}
\label{fig:blacksub3}
\end{subfigure}
\caption{\footnotesize{Two representations of the setting of the theorem in case $d=3$}}
\label{fig:black}
\end{figure}
%\end{comment}
\end{teo}

\begin{proof}
Since for $i=N$ the proof is straightforward, we may assume the thesis to be true for $t>i$ and prove it for $i$.

Let $F$ be any set such that $\dE{F}{\al} \neq \emptyset$ and $\wh G \subseteq F$. By Remark \ref{rem1}, $\te \in \td{\wh F}{\al} \subseteq A$. 
Suppose there exists a set $H$ such that $F \subsetneq H \subsetneq I$, $\dE{H}{\al} = \emptyset$ but there exists $\he \in \ds{H}{\al}$ consecutive to $\al$. Hence $\he \in A_j$ with $i \leq j \leq i+1$. Consider an element $\be \in \dE{F}{\al}$. Now there exists $U \supseteq F$ such that $\be \in \dE{U}{\he}$ and we follow the same argument based on property (G2) used in the proof of Lemma \ref{preblack}.\ref{preblack4} in order to find $\om \in \td{\wh U}{\he} \subseteq A$ consecutive to $\he$ and such that either $\om \gg \te$ or $\om \in \ds{T}{\te}$ for some $T \subseteq F$. If $\he \in A_{i+1}$ then $\om \in A_h$ with $i \leq h \leq i+1$ and, if $h=i$ we conclude since $\om \geq \te$. Otherwise, %necessarily $\om \in A_{i+1}$, since 
if $\he \in A_{i+1}$, the inductive hypothesis applied to $\he$ and $\om$ forces $\om \in A_{i+1}$. In this case $\om$ satisfies the assumptions of Lemma \ref{preblack} (\ref{preblack1} or \ref{preblack2}) with respect to $\te$ and we can conclude using them.

From this fact, it follows that we may restrict to choose $F$ to be maximal with respect to having $\dE{F}{\al} \neq \emptyset$ and $\ds{H}{\al}= \emptyset$ for every $F \subsetneq H \subsetneq I$. Lemma \ref{minimidelta}.5, applied choosing as good ideal $S$ itself, implies that if, for some set $H$, $\ds{H}{\al} \neq \emptyset$ then either $H \subseteq F$ or $H \supseteq \wh F$. Analogously, since $\al$ and $\te$ are consecutive, as consequence of Lemma \ref{minimidelta}.2, we also know that if $\ds{H}{\al} \neq \emptyset$ then either $H \subseteq G$ or $H \supseteq \wh G$. 
    
Also notice that if there exists $\de \in A_{i+1}$ such that $\de \gg \al$, then, since $\al$ and $\te$ are consecutive, $\de \geq \te$ and if $\de_i=\te_i$ then $i \in \wh G \subseteq F$. Thus we are again in one of the situations described in Lemma \ref{preblack} (\ref{preblack1} or \ref{preblack2}) and we can conclude. If we exclude this situation, necessarily $\al$ is a complete infimum of elements of $A$ as described in Definition \ref{completeinfimum} and Lemma \ref{prelem}.\ref{prelem1}. Hence there exists some element $\de \in (A_i \cup A_{i+1}) \cap \ds{T}{\al}$ for some $T \subseteq F$. Fix one of such $T$.

We make the following observations: 

i) Assume $G= \wh F$. Hence, $\ds{H}{\al} \neq \emptyset$ if and only if $H=F$ or $H=G$. Therefore $T=F$ and we can conclude applying one among \ref{preblack3}, \ref{preblack4} and \ref{preblack5} of Lemma \ref{preblack}. 

ii) Assume $T \subseteq G$. Hence, $\wh G \nsubseteq T$ and we can take $j \in \wh G \setminus T$. Hence $\te_j > \al_j$ and $\de_j > \al_j$ and this implies $\te \wedge \de = \te $ since $\al$ and $\te$ are consecutive. If $\de \in A_i$, we conclude that also $\te \in A_i$.
Otherwise if $\de \in A_{i+1}$, observe that $\te_l=\al_l < \de_l$ for every $l \in \wh F \subseteq G \cap \wh T$. Therefore, either $\de \gg \te$ or $\de \in \Delta^S_H(\al)$ for some $H \subseteq F$ and we conclude applying Lemma \ref{preblack} (\ref{preblack1} or \ref{preblack2}).  %By Lemma \ref{minimidelta}(1), it follows that $T \subsetneq G$. \\

(iii) Assume there exists some element $ \de \in A_i  \cap \ds{U}{\al}$ for some $\wh G \subseteq U \subseteq F$. In this case we conclude applying Theorem \ref{bianchi} to $\de, \al, \te$ to get the thesis.

%Lemma \ref{preblack}.\ref{preblack3}. \\

By (ii) and (iii), we can restrict to assume $\de \in A_{i+1}$ and $T \nsubseteq G$.
To conclude the proof we need to show that $\dE{T}{\al} \neq \emptyset$ and then use \ref{preblack4} or \ref{preblack5} of Lemma \ref{preblack}. 

By eventually relabelling the indexes, one can assume $F=\{1,\ldots,r\}$ and
we can apply property (G2) as in Proposition \ref{propG2} to find, for every $j\in F$, a maximal set $G_j$ such that $\wh F\subseteq G_j\subseteq R_j=I\setminus\{j\}$ and $\ds{G_j}{\al}\neq \emptyset$. Recall that also $\wh F = G_1 \cap \cdots \cap G_r$ and, for every $j \neq k$, $G_j \cup G_k= I$. We can also assume that $G_r=G$ and, looking at the proof of Proposition \ref{propG2}, that $G_j\supsetneq \wh F$, otherwise we would have $G_1=G_2= \cdots= G=\wh F$ and we can conclude by the observation (i) above. Then we suppose by way of contradiction $\te \in A_{i+1}$ and for $j=1, \ldots, r$ call $\om^j$ the minimal element of $\ds{G_j}{\al}$ (hence $\te=\om^r$). It is easy to observe that, since $\ds{U}{\al}= \emptyset$ for every $U \supsetneq G_j$, then each $\om^j$ is consecutive to $\al$ and it is in $A$ since $G_j \supseteq \wh F$; thus $\om^j \in A_i \cup A_{i+1}$. Observe that if $\om^j \in A_{i+1}$, we get $(A_i \cup A_{i+1}) \cap \ds{U}{\al}= \emptyset$ for every $U \subseteq F \cap G_j$, since assuming by way of contradiction that set to be nonempty 
 and 
using the same argument used in (ii) for $\te$ (replacing $T$ by $U$, $G$ by $G_j$ and $\te$ by $\om^j$), this would imply $\om^j \in A_{i}$. This implies $T \nsubseteq F \cap G_j$ for every $j$ such that $\om^j \in A_{i+1}$. It follows that if $\om^j \in A_{i+1}$ for every $j \in\{1,\ldots r\}$, we must have $T=F$ and the proof can be completed using \ref{preblack4} and \ref{preblack5} of Lemma \ref{preblack}.

Instead, if $\om^j \in A_{i}$, we must have $\dE{\wh G_j}{\al} \neq \emptyset$ otherwise we would necessarily have $\om^j \in A_{i+1}$ by applying Theorem \ref{bianchi} to $\om^j, \al, \te$.

After a permutation of the indexes, assume $\om^j \in A_{i}$ for $j=1, \ldots, t$, with $t \leq r-1$, and $\om^j \in A_{i+1}$ for $j= t+1, \ldots, r$. As said before, $T \nsubseteq F \cap G_j$ for every $j > t$. It follows that $$ T \supseteq H:= \bigcup_{j = t+1}^{r} \wh G_j.  $$
We can conclude if we show that $\dE{U}{\al} \neq \emptyset$ for every $U$ such that $H \subseteq U \subseteq F$ (and hence in particular also for $T$). This will allow to conclude the proof by \ref{preblack4} and \ref{preblack5} of Lemma \ref{preblack} (setting the $F$ appearing in the statement of the lemma equal to $T$).
First we show that $\dE{H}{\al} \neq \emptyset$. We are going to make use of the following property which depends on the fact that each set $U$ for which $\dE{U}{\al} \neq \emptyset$ either contains $\wh G_j$ or is contained in $G_j$ (as consequence of Lemma \ref{minimidelta}.\ref{minimidelta5}),  
\begin{center}
$(\ast)$: If $U$ is a set such that $\dE{U}{\al} \neq \emptyset$ and $j \in F$, then $j \in U$ if and only if $\wh G_j \subseteq U$. 
\end{center}
 Call $\be$ the minimal element of $\dE{F}{\al}$ and recall that for every $j=1, \ldots, t$, since $\om^j \in A_{i}$, we know that $\dE{\wh G_j}{\al} \neq \emptyset$.
 Since $\wh G_1 \subsetneq F$, by applying Proposition \ref{riferimento} to $\be$ and $\dE{\wh G_1}{\al}$, we get that there exists $\he \in \dE{W_1}{\al}$ with $(F \setminus \wh G_1) \subseteq W_1 \subsetneq F$. Now, clearly $\wh G_1 \nsubseteq W_1$ and hence by $(\ast)$, $1 \not \in W_1$.
 Applying again Proposition \ref{riferimento} to $\he$ and $\dE{\wh G_2}{\al}$ we find $\he^2 \in \dE{W_2}{\al}$ with $F \setminus (\wh G_1 \cup \wh G_2) \subseteq W_1 \setminus \wh G_2 \subseteq W_2 \subsetneq W_1 \subsetneq F$. Thus, by the same argument above $1,2 \not \in W_2$.
 
Iterating the process, in the $j$-th step (for $j \leq t$), we apply Proposition \ref{riferimento} to $\he^{j-1}$ and $\dE{\wh G_j}{\al}$ in order to find $\he^j \in \dE{W_j}{\al},$ with $W_j \subsetneq F$ and $1, \ldots, j \not \in W_j$. Eventually we find a set $W_t \subsetneq F $ such that $\dE{W_t}{\al} \neq \emptyset$ and $1, \ldots, t \not \in W_t$. Hence $H=\bigcup_{j = t+1}^{r} \wh G_j = F \setminus (\bigcup_{j = 1}^{t} \wh G_j)  \subseteq W_t$. Let $j \in W_t$. If $j\not \in H$, then $j \leq t$ and this is a contradiction; therefore $W_t=H$ and $\dE{H}{\al} \neq \emptyset$.
% We show that it is sufficient to prove that $\dE{H}{\al} \neq \emptyset$. 
 %Now consider a set $U$ such that $H \subsetneq U \subseteq F$.
 %Hence there exists $k\in U\setminus H$, i.e $k\in\{1,\ldots t\}$. Since $k\in U\setminus G_k$, by $(\ast)$, $\wh G_k\subseteq U$. Consider $H'=H\cup \wh G_k \subseteq U$ and observe that by Lemma \ref{minimidelta}.\ref{minimidelta1}, $\dE{H'}{\al}\neq \emptyset$. If $H'=U$, then $\dE{U}{\al}\neq \emptyset$, if not, we can find $k' \in U \setminus H'$, and clearly also $k' \in \{1,\ldots t\}$. Iterating the process we find sets
 
 Now consider a set $U$ such that $H \subsetneq U \subseteq F$.
 For every $k \in U\setminus H$ we have $k\in\{1,\ldots t\}$ and since $k \in U\setminus G_{k}$, by $(\ast)$, $\wh G_k \subseteq U$. Hence, $\bigcup_{k \in U \setminus H} \wh G_{k}\subseteq U$. Observe that $\bigcup_{k \in U \setminus H} \wh G_{k} \cup H = U$ and by Lemma \ref{minimidelta}.\ref{minimidelta1}, $\dE{U}{\al}\neq \emptyset$ and this concludes the proof.
\end{proof}

\medskip

As application of Theorem \ref{neri} we get that the same result of Proposition \ref{infsem} holds true for the set $A$ and in particular for each level $A_i$.

%\bf sto fact va messo in qualche remark! l'ho aggiunto sopra come item 5 \rm
%Fact: If $\al,\be,\te\in A$ with $\be\in \Delta_F^S(\al)$, $\te\in \Delta_{\widehat{F}}^S(\al)$, $\al=\be\wt\te$. If $\be,\te\in A_{i+1}$ then $\al\in A_j$, $j\leq i$.

\begin{prop}
\label{infap}
Let $S \subseteq \N^d$ be a good semigroup and $E \subseteq S$ be a good ideal. Set $A= S \setminus E$ and let $\bs{c}_E= (c_1, \ldots, c_d)$ be the conductor of $E$. % and set $\ga_E = \bs{c}_E - \bs{1}$.
Let $\al\in \N^d$ be such that $\bs{c}_E \in \dE{F}{\al}$ for some non-empty set $F \subsetneq I$. %$\alpha_i \geq c_i$ for every $i \in F$ and $\alpha_i < c_i$ otherwise. 
The following assertions are equivalent:
%$\bs{c}_E \in \ds{F}{\al}$, for some $F\subsetneq I$.
\begin{enumerate}
    \item \label{inf1} $\al\in A_i$.
    \item \label{inf2} $\widetilde\Delta_{\widehat{F}}(\al) \subseteq A_i$.
    \item \label{inf3} $\widetilde\Delta_{\widehat{F}}(\al)$ contains some element of the level $A_i$.
\end{enumerate}
\end{prop}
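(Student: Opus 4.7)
Since $F$ is non-empty, $\widetilde\Delta_{\widehat{F}}(\al)$ is non-empty and (2) $\Rightarrow$ (3) is immediate. I will concentrate on (1) $\Rightarrow$ (2) and (3) $\Rightarrow$ (1).

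For (1) $\Rightarrow$ (2), I would first establish the set-theoretic containment $\widetilde\Delta_{\widehat{F}}(\al)\subseteq A$. That no element of $\widetilde\Delta_{\widehat{F}}(\al)$ lies in $E$ is the contrapositive of (1)$\iff$(3) in Proposition \ref{infsem}; that every such element lies in $S$ follows by running the same proof verbatim with $S$ in place of $E$ and the same anchor $\bs{c}_E$ (only the properties $\bs{c}_E\in \ds{F}{\al}$ and $\bs{c}_E+\N^d\subseteq S$ are needed, both automatic from $\bs{c}_E\geq \bs{c}$). Next I would fix $\be\in \widetilde\Delta_{\widehat{F}}(\al)$ and pick a saturated chain $\al=\bs{\xi}_0<\bs{\xi}_1<\cdots<\bs{\xi}_n=\be$ of consecutive elements of $S$, noting that every $\bs{\xi}_j$ automatically belongs to $\widetilde\Delta_{\widehat{F}}(\al)\cup\{\al\}$ because its $\widehat F$-coordinates are pinned to those of $\al$. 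The heart of the argument is an induction on $j$: assuming $\bs{\xi}_j\in A_i$ and writing $\bs{\xi}_{j+1}\in \ds{G_j}{\bs{\xi}_j}$, the inclusion $\widehat F\subseteq G_j$ yields $\wh{G_j}\subseteq F$, and the element $\bs{c}_E\vee\bs{\xi}_j$ sits in $\dE{F}{\bs{\xi}_j}\subseteq \tdE{\wh{G_j}}{\bs{\xi}_j}$; Theorem \ref{neri} then promotes $\bs{\xi}_{j+1}$ to $A_i$ and closes the induction.

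For (3) $\Rightarrow$ (1), given $\be\in \widetilde\Delta_{\widehat{F}}(\al)\cap A_i$, a coordinate-wise check shows $\al=\be\wedge\bs{c}_E$, so property (G1) puts $\al$ in $S$; if $\al$ were in $E$, Proposition \ref{infsem} would force $\be\in\widetilde\Delta_{\widehat{F}}(\al)\subseteq E$, contradicting $\be\in A$. Hence $\al\in A_k$ for some $k$, and applying the already-proved implication (1) $\Rightarrow$ (2) to $\al$ gives $\be\in A_k$, which forces $k=i$.

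The hard part will be the inductive step of (1) $\Rightarrow$ (2): Theorem \ref{neri} demands a non-empty $\tdE{\wh{G_j}}{\bs{\xi}_j}$ at every intermediate node, yet the canonical candidate $\bs{c}_E$ no longer lies in $\dE{F'}{\bs{\xi}_j}$ for any $F'$ once $\bs{\xi}_j$ has exceeded $\al$ in some $F$-coordinate. The cure is to pass to $\bs{c}_E\vee\bs{\xi}_j$: it stays in $E$ (dominating $\bs{c}_E$) and, because $c_{E,k}=\al_k\leq \bs{\xi}_{j,k}$ for $k\in F$ while $c_{E,k}>\al_k=\bs{\xi}_{j,k}$ for $k\in\widehat F$, it lands precisely in $\dE{F}{\bs{\xi}_j}$, whence in $\tdE{\wh{G_j}}{\bs{\xi}_j}$ by Remark \ref{remdelta}.\ref{remdelta2}.
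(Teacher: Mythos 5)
Your proof is correct and takes essentially the same route as the paper: both arguments first reduce to membership in $A$ by applying Proposition \ref{infsem} to $E$ and to $S$ itself, and then propagate the level along consecutive elements of $\widetilde\Delta_{\widehat{F}}(\al)\cup\{\al\}$ via Theorem \ref{neri}, using as witness exactly the element $\bs{c}_E\vee\bs{\xi}_j\in\dE{F}{\bs{\xi}_j}$ (the paper's $\eps$ is this same element, written coordinatewise).
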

\begin{proof}
Proposition \ref{infsem} can be applied both to the good ideal $E$ and to $S$ itself. Hence it follows easily that $\al \in A$ if and only if $\widetilde\Delta_{\widehat{F}}(\al)$ contains some element of $A$ and if and only if $\widetilde\Delta_{\widehat{F}}(\al) \subseteq A$.
Hence, assuming $\widetilde\Delta_{\widehat{F}}(\al) \subseteq A$, it is sufficient to prove that two consecutive elements $\te, \om \in \widetilde\Delta_{\widehat{F}}(\al) \cup \lbrace \al \rbrace$ are in the same level. Suppose $\te < \om$ and therefore $\om \in \td{\wh F}{\te} $. Consider the element $\eps$ such that $\epsilon_j = c_j > \alpha_j = \theta_j$ for $j \in \wh F$ and $\epsilon_j = \theta_j \geq \alpha_j = c_j$ for $j \in F$. By definition of conductor, $\eps \in E$ and furthermore $\eps \in \dE{F}{\te}$. By Theorem \ref{neri}, $\te$ and $\om$ are in the same level.
\end{proof}

%%%%%%%%%%%%%%%%%%%%%%%%%%%%%%%%%%%%%%%%%%%%%%%%%%%%%%%%%%%%%%%%%%%%%%%%%%%%%%%%%%%%%%%

%%%%%%%%%%%%%%%%%%%%%%%%%%%%%%%%%%%%%%%%%%%%%%%%%%%%%%%%%%%%%%%%%%%%%%%%%%%%%%%%%%%%

\section{Infinite subspaces of a good semigroup}

In this section we formally define infinite "geometric" subspaces of a good semigroup. 

For the value semigroup of an analytically irreducible ring, this definition has an algebraic interpretation which is pointed up  in \cite[Corollary 1.6]{canonical:danna}. Furthermore, these sets has been used in \cite[Proposition 1.6]{Wilf-NG} to produce a formula for the computation of the genus of a good semigroup.
Intuitively, they can be seen as lines, planes or higher dimensional subsets of $\N^d$ that are all contained in the semigroup and more particularly in a good ideal or in its complement. 

These subspaces, when of the same dimension, can be threaten as elements of a semigroup of smaller dimension. To express this concept we define a sum operation and a partial order of subspaces induced by those of the semigroup and we generalize to the set of subspaces many properties proved in the two preceding sections. The key results allowing this generalization are Propositions \ref{infsem} and \ref{infap} that in fact identify infinite subspaces with single elements having some component equal to that of the conductor. 

\begin{defi}
\rm
Let $S\subseteq \N^d$ be a good semigroup and consider a non-empty set $ U\subseteq I$ and a good ideal $E \subseteq S$. Set $A=S\setminus E$ and let $\bs{c_E}=(c_1,\ldots,c_n)$ be the conductor of $E$. For $\al \in \N^d$ such that $\alpha_j=c_j$ for all $j\in \wh U$, define $$\al(U):=\tdN{U}{\al}$$ if $U \subsetneq I$ and $\al(U):= \al$ if $U=I$. We say that $\al(U)$ is an \it$U$-subspace\rm of $\N^d$. As consequence of Propositions \ref{infsem} and \ref{infap}, it follows:
\begin{itemize}
    \item If $\al \in E$, then $\al(U) \subseteq E$ and in this case we say that it is an $U$-subspace of $E$.
    \item If $\al \in A$, then $\al(U) \subseteq A$ and in this case we say that it is an $U$-subspace of $A$. In particular, if $\al \in A_i$, the subspace $\al(U) \subseteq A_i$.
\end{itemize}
\end{defi}
We notice that, taking $E:=S$, we have also given the definition of subspaces of the good semigroup itself for every $U$. We observe the following fact: 
\begin{oss}
\label{remsubcont}
Let $\de(V)$ be a subspace and $U\supseteq V$. If $\al\in S$ and $\alpha_i=\delta_i$ for all $i\in V$, then $\al(U)\subseteq \de(V)$.
\end{oss}

%In the following, if there are not ambiguity on the case that we are treating, we write simply $\al(U)$.\\
In order to agree with the geometric representation of these subspaces, we define the dimension of a subspace $\al(U)$ as the cardinality $|\wh U|= d-|U|$. The subspaces of dimension zero correspond to the single elements of $S$. 
Then we call: \emph{points} the subspaces of dimension 0, \emph{lines} the subspaces of dimension 1, \emph{planes} the subspaces of dimension 2 and \emph{hyperplanes} the subspaces of dimension $d-1$.

\begin{ex}
To make a concrete example, we simplify the notation denoting, as in \cite{emb-NG}, the subspace $\al(U)$ as an element in $(\N\cup\{\infty\})^d$ such that $\beta_i=\alpha_i$ if $i\in U$ and $\beta_i=\infty$ if $i\in \wh U$.
Considering the semigroup $S$ of the Example \ref{ex3rami}
we have that:
$(2,4,\infty)$, $(\infty,3,3)$, $(\infty,3,6)$, $(\infty,3,7)$ are subspaces of $S$ having dimension $1$ (lines).\\
$(\infty,\infty,3)$, $(\infty,\infty,6)$, $(3,\infty,\infty)$, $(\infty,5,\infty)$,$(\infty,\infty,9)$, are subspaces of $S$ having dimension $2$ (planes).
\end{ex}

We denote by $E(U)$ and $A(U)$, respectively the set of $U$-subspaces of $E$ and of $A$. Furthermore, for $i\in\{1,\ldots, N\}$, we denote by $A_i(U)$ the set of $U$-subspaces contained in $A_i$.
%By Propositions \ref{infsem} and \ref{infap} follow immediately that:
%\[\al(U)\in E(U) \Rightarrow \al \in E;\hspace{0.4cm}\al(U)\in A(U) \Rightarrow \al \in A;\hspace{0.4cm}\al(U)\in A_i(U) \Rightarrow \al \in A_i.\]

Now we introduce the notation of $\Delta$ for $U$-subspaces (Figure \ref{fig:deltasub}). For $F \subsetneq U$, set: 
%Given a good semigroup $S\subseteq \N^d$, $E$ a good ideal, $F\subsetneq U\subset I$, $\al(U)\subseteq \N^d$, we denote by: 
$$\dE{F}{\al(U)}:=\{\be(U)\in E(U) \hspace{0.1cm}|\hspace{0.1cm} \beta_j=\alpha_j, \text{ for any } j\in F \text{ and } \beta_j>\alpha_j \text{ if } j\in U\setminus F\}$$
$$\tdE{F}{\al(U)}:=\{\be(U)\in E(U) \hspace{0.1cm}|\hspace{0.1cm} \beta_j=\alpha_j, \text{ for any } j\in F \text{ and } \beta_j\geq\alpha_j \text{ if } j\in U\setminus F\}\setminus{\al(U)}$$
%\begin{comment}

\begin{figure}[H]
\centering
 \tdplotsetmaincoords{60}{120}
\begin{tikzpicture}[tdplot_main_coords, scale=0.8, font=\footnotesize]

\draw[thick,->] (0,0,0) -- (3.5,0,0) node[anchor= east]{$1$}; 
\draw[thick,->] (0,0,0) -- (0,3.2,0) node[anchor= west]{$2$};
\draw[thick,->] (0,0,0) -- (0,0,3) node[anchor=south]{$3$};

\draw plot [mark=*, mark size=2] coordinates{(0,0,0)}  node[] at (0,0.8,1.5) {$\bs{\alpha(U)}$} node[] at (0.4,0.4,0) {$\bs{\alpha}$};

%\draw[dashed] (0,0.5,0) -- (0,0.5,3);
%\draw[dashed] (0,1,0) -- (0,1,3);
%\draw[dashed] (0,1.5,0) -- (0,1.5,3);
%\draw[dashed] (0,2,0) -- (0,2,3);
%\draw[dashed] (0,2.5,0) -- (0,2.5,3);
%\draw[dashed] (0,3,0) -- (0,3,3);

\draw[dashed] (0.5,0,0) -- (0.5,0,3);
\draw[dashed] (1,0,0) -- (1,0,3);
\draw[dashed] (1.5,0,0) -- (1.5,0,3);
\draw[dashed] (2,0,0) -- (2,0,3);
\draw[dashed] (2.5,0,0) -- (2.5,0,3);
\draw[dashed] (3,0,0) -- (3,0,3);

\end{tikzpicture}
\caption{\footnotesize{Representation of $\Delta_{2}(\al(\{1,2\}))$}}

\label{fig:deltasub}
\end{figure}
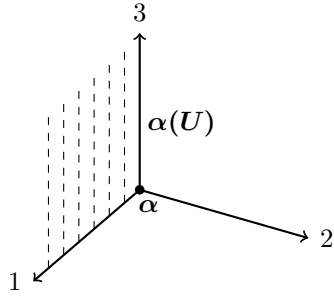

%\end{comment}

\begin{lem} Let $F\subsetneq U\subseteq I$ and let $E$ be a good ideal of a good semigroup $S$. Let $\al(U)$ be a subspace of $E$:
\label{subspaceprelem}
\begin{enumerate}
    \item \label{subspaceprelem1} $\be(U)\in \dE{F}{\al(U)}$ if and only if $\be\in \dE{F\cup \wh U}{\al}$ and $\be(U)\in \tdE{F}{\al(U)}$ if and only if $\be\in \tdE{F\cup \wh U}{\al}$ 
    \item \label{subspaceprelem2} Let $\wh F\cap U\subseteq G \subseteq U$ and $\be(U)$ a subspace of $E$.  Then $\be(U) \in \tdE{ G}{\al(U)}$ if and only if $\be \in \tdE{ G}{\al}$.
   % \item \label{subspaceprelem3} Let $\wh F\cap U\subseteq G \subseteq U$ and $\be(U)$ a subspace of $E$.  Then $\be(U) \in \tdE{\wh G\cap U}{\al(U)}$ if and only if $\be \in \tdE{\wh G\cap U}{\al}$.
\end{enumerate}
The analogous statement hold for $A$ and $A_i$.
\end{lem}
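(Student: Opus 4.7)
The plan is to unwind the definitions of $U$-subspace and of the $\Delta$ and $\widetilde{\Delta}$ sets, and observe that both parts reduce to a purely combinatorial bookkeeping check. The key point underlying everything is that being an $U$-subspace pins the coordinates on $\wh U$ to the conductor: $\al(U)$ requires $\alpha_j = c_j$ for $j \in \wh U$, and similarly for $\be(U)$. Hence on $\wh U$ we have $\alpha_j = \beta_j = c_j$ automatically, and all questions about the relative position of $\al(U)$ and $\be(U)$ reduce to comparisons on $U$.

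For part 1, I would argue as follows. The condition $\be(U) \in \dE{F}{\al(U)}$ unfolds to: $\be \in E$, $\beta_j = c_j$ for $j \in \wh U$, $\beta_j = \alpha_j$ for $j \in F$, and $\beta_j > \alpha_j$ for $j \in U \setminus F$. Combining the second and third yields $\beta_j = \alpha_j$ on all of $F \cup \wh U$, while the strict inequality on $U \setminus F = I \setminus (F \cup \wh U)$ is exactly the strict inequality required by $\dE{F \cup \wh U}{\al}$. Conversely, if $\be \in \dE{F \cup \wh U}{\al}$, then $\beta_j = \alpha_j = c_j$ for $j \in \wh U \subseteq F \cup \wh U$, so $\be(U)$ is a genuine subspace of $E$, and the remaining conditions on $F$ and $U \setminus F$ give $\be(U) \in \dE{F}{\al(U)}$. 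The tilde version runs identically, replacing $>$ by $\geq$ on $U \setminus F$ and noting that the exclusion of $\al(U)$ corresponds to the exclusion of $\al$, since $\be(U) = \al(U)$ iff $\be = \al$ under the shared pinning on $\wh U$.

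For part 2, the assumption $G \subseteq U$ splits the complement of $G$ in $I$ as $(U \setminus G) \sqcup \wh U$. The condition $\be \in \tdE{G}{\al}$ requires $\beta_j = \alpha_j$ on $G$ and $\beta_j \geq \alpha_j$ on $I \setminus G$; but on $\wh U$ the hypothesis that $\be(U)$ is a subspace of $E$ gives $\beta_j = c_j = \alpha_j$, so the inequality there is automatic. What remains is exactly the defining condition of $\tdE{G}{\al(U)}$ on $U$, and the two statements coincide. The constraint $\wh F \cap U \subseteq G$ plays no further role beyond ensuring $G \subseteq U$, and the exclusion of the point $\al(U)$ on the left corresponds to the exclusion of $\al$ on the right, again by the bijection $\be \leftrightarrow \be(U)$ induced by the fixed values on $\wh U$.

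I do not expect a real obstacle: the only subtlety is ensuring in the right-to-left direction of part 1 that $\be(U)$ really is a subspace of $E$, i.e.\ that $\beta_j = c_j$ for $j \in \wh U$, which is forced for free because $\wh U \subseteq F \cup \wh U$. Finally, the analogous statements for $A$ and $A_i$ follow with no extra work because Propositions \ref{infsem} and \ref{infap} guarantee that membership in $E$, in $A$, and in a specific level $A_i$ is preserved along the entire subspaces $\al(U)$ and $\be(U)$, so the same bookkeeping applies verbatim after replacing $E$ by $A$ or $A_i$.
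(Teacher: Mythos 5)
Your proof is correct and follows essentially the same route as the paper: both arguments are a direct unwinding of the definitions, using that the subspace condition pins $\beta_j=\alpha_j=c_j$ on $\wh U$ so that all comparisons reduce to the coordinates in $U$ (the paper merely derives one direction of part 2 from part 1 via the inclusion $\tdE{G\cup\wh U}{\al}\subseteq\tdE{G}{\al}$, which is the same bookkeeping you do by splitting $I\setminus G$).
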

\begin{proof}
1. If $\be(U)\in \dE{F}{\al(U)}$, then $\beta_i=\alpha_i$ for all $i\in F$ and $\beta_i>\alpha_i$ for all $i\in U\setminus F=\wh F\cap U=I\setminus(F\cup \wh U)$. Furthermore, by definition of subspace, $\beta_i=\alpha_i=c_i$ for all $i\in \wh U$. Hence we have $\be \in \dE{F\cup \wh U}{\al}$.
Conversely if $\be \in \dE{F\cup \wh U}{\al}$, then $\beta_i=\alpha_i=c_i$ for all $i\in \wh U$; hence $\be(U)$ is a subspace of $E$. Furthermore $\beta_i=\alpha_i$ for all $i\in F$, $\beta_i>\alpha_i$ for all $i\in I\setminus(F\cup \wh U)=U\setminus F$.
The other case is analogous. 
\\
2. By part \ref{subspaceprelem1}, if $\be(U)\in \tdE{G}{\al(U)}$ then $\be\in \tdE{G\cup \wh U}{\al}\subseteq \tdE{G}{\al}$, %while if $\be(U)\in \tdE{\wh G \cap U}{\al(U)}$, then $\be\in \tdE{(\wh G \cap U)\cup \wh U}{\al}=\tdE{\wh G}{\al}\subseteq \tdE{\wh G\cap U}{\al}$. 
The converse follows immediately by the definition of $\tdE{G}{\al(U)}$ and by the fact that $\be(U)$ is a subspace of $E$.
%3) If $\be(U)\in \tdE{\wh G \cap U}{\al(U)}$, by part \ref{subspaceprelem1}, $\be\in \tdE{(\wh G \cap U)\cup \wh U}{\al}=\tdE{\wh G}{\al}\subseteq \tdE{\wh G\cap U}{\al}$. The converse follows immediately by definition of $\tdE{\wh G\cap U}{\al(U)}$ and by the fact that $\be(U)$ is a subspace of $E$.
\end{proof}

We want to extend to subspaces the sum operation of the semigroup and the partial order relation $\leq$. If $\al(U)$ and $\be(U)$ are $U$-subspace of $E$, we denote by $\bs{\sigma}\in E$ the element such that $\sigma_j=\alpha_j+\beta_j$ if $j\in U$ and $\sigma_j=c_j$ if $j\in \wh U$. By construction $\bs{\sigma}(U)$ is a subspace of $E$ and we define $\al(U)+\be(U):=\bs{\sigma}(U)$. For the order relation, we say that $\al(U)\leq \be(U)$ if $\alpha_i\leq \beta_i$ for all $i\in U$.

We also extend to subspaces Properties (G1) and (G2) of good semigroups.

\begin{prop}
Let $ U, V \subseteq I$ and let $E$ be a good ideal of a good semigroup $S$. %$\al(U),\be(U)$ subspaces of $E$:
\label{subspacelem}
\begin{enumerate}
\item \label{subspacelem1} If $\al(U)$ and $\be(V)$ are subspace of $E$, then $\al(U)\wedge\be(V):=(\al \wedge \be)(U\cup V)$ is still a subspace of $E$.
\item \label{subspacelem2} Let $F \subsetneq U$. If $\al(U)\in E(U)$ and $\dE{F}{\al(U)}\neq \emptyset$, then $\tdE{U\setminus F}{\al(U)}\neq \emptyset$.
\end{enumerate}
\end{prop}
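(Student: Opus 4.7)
The plan is to deduce both items by translating statements about subspaces back into statements about representative elements via Lemma \ref{subspaceprelem} and then invoking the good-ideal axioms (G1), (G2) and (G3). Recall that a subspace $\al(U)$ of $E$ is determined by an element $\al\in E$ with $\alpha_j=c_j$ on $\wh U$, and that Lemma \ref{subspaceprelem}.\ref{subspaceprelem1} identifies $\dE{F}{\al(U)}$ with $\dE{F\cup\wh U}{\al}$ and similarly for the tilded version.

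For part (1) the verification is routine: $\al\wedge\be\in E$ by property (G1) applied to the good ideal $E$, and for every $j\in\widehat{U\cup V}=\wh U\cap\wh V$ we have $\alpha_j=c_j$ and $\beta_j=c_j$, so $(\al\wedge\be)_j=c_j$. This says exactly that $(\al\wedge\be)(U\cup V)$ fits the definition of a subspace of $E$.

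For part (2) I would assume $F\neq\emptyset$ (when $F=\emptyset$ the target $\tdE{U}{\al(U)}$ is empty by definition, so the statement should be read under this implicit restriction) and unwind the hypothesis through Lemma \ref{subspaceprelem}.\ref{subspaceprelem1} to get some $\be\in\dE{F\cup\wh U}{\al}$; the goal then becomes the construction of $\eps\in E$ with $\epsilon_j=c_j$ on $\wh U$, $\epsilon_j=\alpha_j$ on $U\setminus F$, $\epsilon_j\geq\alpha_j$ on $F$, and $\eps\neq\al$. A first attempt via Remark \ref{remdelta}.\ref{remdelta4} yields $\te\in\tdE{U\setminus F}{\al}$, but such a $\te$ may have $\theta_j>c_j$ for some $j\in\wh U$ and then does not define a subspace. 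To correct this, I would apply (G2), in the form of Proposition \ref{propG2}, to $\al$ and $\be$ at an index $i\in F$ (where $\alpha_i=\beta_i$), producing $\bs\rho\in E$ with $\rho_i>\alpha_i$, $\rho_j\geq c_j$ on $\wh U$, $\rho_j\geq\alpha_j$ on $F\setminus\{i\}$, and $\rho_j=\alpha_j$ on $U\setminus F$ (here the G2-clause forces equality, since $\alpha_j\neq\beta_j$ there). Finally, using (G3), I would take an auxiliary $\bs d\in E$ with $d_j=c_j$ on $\wh U$ and $d_j\geq\rho_j$ on $U$, and set $\eps:=\bs\rho\wedge\bs d$; property (G1) guarantees $\eps\in E$, and a direct coordinate check shows that $\eps(U)\in\tdE{U\setminus F}{\al(U)}$. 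The main obstacle is precisely this reconciliation of the subspace condition on $\wh U$ with the element produced by (G2); meeting with the auxiliary $\bs d$ is the trick that makes the whole construction go through.
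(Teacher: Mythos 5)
Your proof is correct, and for part (1) it coincides with the paper's one-line verification. For part (2) you follow the same basic route as the paper (translate the hypothesis on subspaces into a statement about the representative $\al$ via Lemma \ref{subspaceprelem}, apply property (G2) in $E$, and translate back), but you are more careful at exactly the point where the paper is terse: the paper passes from $\tdE{U\setminus F}{\al}\neq\emptyset$ back to $\tdE{U\setminus F}{\al(U)}\neq\emptyset$ by citing Lemma \ref{subspaceprelem}.\ref{subspaceprelem2}, whose relevant direction presupposes that the witness $\be$ already satisfies $\beta_j=c_j$ on $\wh U$, whereas the element produced by (G2) is only guaranteed to satisfy $\beta_j\geq c_j$ there. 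Your repair --- choosing the (G2) index $i$ inside $F$ so that the new element exceeds $\al$ in a coordinate of $U$ (hence survives as an element distinct from $\al(U)$ after truncation), and then meeting with an auxiliary $\bs d\geq\bs c_E$ that is equal to $\bs c_E$ on $\wh U$ and using (G1) --- is exactly the missing step, and it makes the argument complete. Your remark that $F$ must implicitly be non-empty (otherwise the target set $\tdE{U}{\al(U)}$ is empty by definition) is also a fair reading of the statement. In short: same strategy, with a small but genuine gap in the published proof filled in.
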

\begin{proof} 
1. It is sufficient to notice that, if $i\in I\setminus (U\cup V)=\wh U\cap \wh V$, then $\alpha_i=\beta_i=c_i$.\\
2. If $\al(U)\in E(U)$ then $\al\in E$. Since $\dE{F}{\al(U)}\neq \emptyset$, by Lemma \ref{subspaceprelem}.\ref{subspaceprelem1}, it follows $\tdE{F \cup \wh U}{\al}\neq \emptyset$. By Property (G2) applied on $E$ we have $\tdE{\wh F\cap U}{\al}\neq \emptyset$; then by Lemma \ref{subspaceprelem}.\ref{subspaceprelem2}, $\tdE{\wh F\cap U}{\al(U)}=\tdE{U\setminus F}{\al(U)}\neq \emptyset$. 
\end{proof}

We can also generalize in a natural way the definition of complete infimum for subspaces.

\begin{defi} \rm Given a good semigroup $S\subseteq \N^d$, %and $E \subseteq S$ be a good ideal. 
we say that the subspace $\al(U)$ is a \it complete infimum \rm if there exists $\be^{1}(U),\ldots,\be^{(r)}(U)\in S(U)$, with $r\geq 2$, satisfying the following properties:
\begin{enumerate}
\item $\be^{(j)}(U) \in \ds{F_j}{\al(U)}$ for some non-empty set $F_j \subsetneq U$.
\item For every choice of $j,k \in \lbrace 1, \ldots, r \rbrace$, $\al(U) = \be^{(j)}(U) \wedge \be^{(k)}(U) $.
\item $\bigcap_{k=1}^r {F_k}= \emptyset$.
\end{enumerate}
In this case we write $\al(U)=\be^{(1)}(U)\wt\be^{(2)}(U)\cdots\wt\be^{(r)}(U)$.
\end{defi}

The generalization of Proposition \ref{propG2} for subspaces will be particularly useful in the following.

\begin{prop}
\label{propG2sub} Given $F \subsetneq U$,
if $\be(U)\in \dE{F}{\al(U)}$, there exist $\be^{(1)}(U), \ldots, \be^{(r)}(U)$ with $1\leq r\leq |F|$, such that:
\[\al(U)=\be(U)\wt \be^{(1)}(U) \wt \be^{(2)}(U)\wt \cdots \wt \be^{(r)}(U) \]
and, for $i=1,\ldots r$, $\be^{(i)}(U)\in\dE{G_i}{\al(U)}$ with $G_i\supseteq  \wh F\cap U$ and $G_1\cap G_2\cap \cdots\cap G_r=\wh F \cap U$. 
\end{prop}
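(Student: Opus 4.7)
The plan is to replicate the proof of Proposition \ref{propG2} in the setting of $U$-subspaces, using Proposition \ref{subspacelem} as the analog of properties (G1) and (G2), and Lemma \ref{subspaceprelem} as the bridge to translate $\Delta$-conditions between the subspace level ($\dE{F}{\al(U)}$) and the element level ($\dE{F \cup \wh U}{\al}$). Every intermediate fact needed for Proposition \ref{propG2} (the relevant parts of Lemma \ref{minimidelta}, Proposition \ref{usodiG2}, and Remark \ref{remarkG2}) admits an immediate subspace counterpart: for instance, the statement ``if $\dE{G}{\al(U)}= \emptyset$ for every $F \subseteq G \subseteq H$ with $|H|=|U|-1$, then $\dE{U\setminus F}{\al(U)}= \emptyset$'' is proved word-for-word as Proposition \ref{usodiG2}, simply replacing $I$ by $U$ and invoking Proposition \ref{subspacelem}.\ref{subspacelem2} in place of (G2).

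First I would establish the subspace version of Remark \ref{remarkG2}: for each $i \in F$, using Proposition \ref{subspacelem}.\ref{subspacelem2} I choose a \emph{maximal} set $G_i$ with $\wh F \cap U \subseteq G_i \subseteq U \setminus \{i\}$ and $\dE{G_i}{\al(U)} \neq \emptyset$, and then pick $\be^{(i)}(U) \in \dE{G_i}{\al(U)}$. By the subspace analog of Lemma \ref{minimidelta}.\ref{minimidelta1} (which follows from Proposition \ref{subspacelem}.\ref{subspacelem1}), such a maximal $G_i$ is unique for each fixed $i$: two candidates could be replaced by their union, still avoiding $i$, contradicting maximality. By construction the indexes in $F$ yield at most $|F|$ distinct sets, so after relabeling we are left with $1 \leq r \leq |F|$ sets $G_1,\dots,G_r$, and since $i \notin G_i$ for each such $i$, we immediately get $G_1\cap\cdots\cap G_r \subseteq U\setminus F$, and the reverse inclusion holds by construction, giving $G_1\cap\cdots\cap G_r = \wh F \cap U$.

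Next I would verify the wedge condition $\be^{(i)}(U)\wedge \be^{(j)}(U) = \al(U)$, which by Proposition \ref{subspacelem}.\ref{subspacelem1} is equivalent to $G_i \cup G_j = U$. Assume by contradiction there is $k \in U \setminus (G_i \cup G_j)$; then necessarily $k \in F$, and $G_i \subsetneq G_i \cup G_j \subseteq U\setminus \{k\} \subseteq G_k$, so the proof reduces to excluding a proper containment $G_j \subsetneq G_i$. If $G_j \subsetneq G_i$ then $j \in G_i$ by maximality of $G_j$, and I follow exactly the same argument as in Proposition \ref{propG2}: for any $T$ with $\wh{G_i} \cap U \subseteq T \subseteq U \setminus \{j\}$, the maximality of $G_j$ combined with the subspace analog of Lemma \ref{minimidelta}.\ref{minimidelta3} forces $\dE{T}{\al(U)} = \emptyset$, and the subspace version of Proposition \ref{usodiG2} then yields $\dE{G_i}{\al(U)} = \emptyset$, a contradiction.

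Finally, the wedge $\be(U) \wedge \be^{(i)}(U) = \al(U)$ for each $i$ is automatic from $F \cup G_i = U$ (since $G_i \supseteq U\setminus F$), so $\al(U)$ is a complete infimum of $\be(U), \be^{(1)}(U),\dots,\be^{(r)}(U)$ with the required intersection $\bigcap_i G_i = \wh F \cap U$ and the required containments $G_i \supseteq \wh F \cap U$. The main obstacle in executing this plan is the book-keeping of the subspace analogs of the auxiliary statements from Section~1; once those translations are set up cleanly via Lemma \ref{subspaceprelem} and Proposition \ref{subspacelem}, the argument is structurally identical to that of Proposition \ref{propG2} and no new combinatorial difficulty arises.
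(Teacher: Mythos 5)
Your route is genuinely different from the paper's. The paper does not redo the combinatorics at the subspace level at all: it lifts $\be(U)$ to an element $\be'\in \dE{U}{\be}$ (nonempty because $\be(U)$ is a subspace of $E$), observes that $\be'\in\dE{F}{\al}$ with $F$ now read as a subset of $I$ (all $\wh U$-coordinates of $\be'$ are \emph{strictly} above those of $\al$), applies Proposition \ref{propG2} once at the element level to get $\be^{(i)}\in\dE{G_i'}{\al}$ with $\bigcap G_i'=\wh F$, and then notes that since $G_i'\supseteq\wh F\supseteq\wh U$ each $\be^{(i)}$ automatically has conductor values on $\wh U$, hence represents a $U$-subspace; setting $G_i:=G_i'\cap U$ finishes the proof in a few lines. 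The whole point of that lifting is to make every subspace analogue of the Section~1 machinery unnecessary.

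Your plan can be made to work, but it has one concrete gap: the per-index choice of $G_i$ does not follow from Proposition \ref{subspacelem}.\ref{subspacelem2}, which is only the subspace analogue of Remark \ref{remdelta}.\ref{remdelta4} --- it produces \emph{one} set $G\supseteq \wh F\cap U$ with $\dE{G}{\al(U)}\neq\emptyset$, with no control over which index of $F$ it avoids. What you need is the subspace analogue of Remark \ref{remarkG2} (equivalently, of property (G2) applied at a prescribed coordinate $i$), and the same issue infects your claim that the subspace version of Proposition \ref{usodiG2} is proved ``word-for-word'' with Proposition \ref{subspacelem}.\ref{subspacelem2} in place of (G2): the element-level proof of Proposition \ref{usodiG2} rests precisely on Remark \ref{remarkG2}, so substituting the weak form is circular. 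The missing statement is true, but proving it forces you to descend to representatives: apply (G2) at coordinate $i$ to $\al,\be\in E$, obtaining $\eps\in E$ which may have $\eps_j>c_j$ for some $j\in\wh U$ and hence is not a subspace representative; you must then truncate it, e.g.\ by taking $\eps\wedge\bs{\mu}$ with $\bs{\mu}\in E$ having $\mu_j=c_j$ on $\wh U$ and large entries on $U$, and invoke (G1) to stay in $E$. (Alternatively, invoke Lemma \ref{varsubspaces}.\ref{usodiG2sub} directly and take its contrapositive with $H=U\setminus\{i\}$ --- but that lemma is itself only asserted ``analogously'' in the paper, and its proof hides the same truncation step.) Once this single translation is supplied, the rest of your argument --- uniqueness of the maximal $G_i$, the reduction of $G_i\cup G_j=U$ to excluding $G_j\subsetneq G_i$, and the contradiction via the subspace version of Proposition \ref{usodiG2} --- is sound; it is just considerably longer than the paper's reduction.
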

\begin{proof}
Since $\be(U)$ is a subspace of $E$, $\dE{U}{\be}\neq \emptyset$. Let us consider $\be'\in \dE{U}{\be}$. We notice that $\beta'_i=\beta_i=\alpha_i$ if $i\in F$,  $\beta'_i=\beta_i>\alpha_i$ if $i\in U\setminus F$, and $\beta'_i>\beta_i=\alpha_i=c_i$ if $i\in \wh U$. Hence $\be'\in \dE{F}{\al}$. By Proposition \ref{propG2}, applied on $\be'$ and $\al$, there exists $\be^{(1)}, \ldots, \be^{(r)}$ (with $1\leq r\leq |F|$), such that %for $i=1,\ldots r$, $\be^{(i)}\in\dE{G'_i}{\al}$, with $G'_i\supseteq  \wh F\supsetneq \wh U$. Furthermore, we have 
$\al=\be' \wt \be^{(1)} \wt \be^{(2)}\wt \cdots \wt \be^{(r)}$. It follows that for $i=1,\ldots r$, $\be^{(i)}\in\dE{G'_i}{\al}$ and $G'_1\cap G'_2\cap \cdots \cap G'_r=\wh F \supsetneq \wh U$. \\ 
For every $i$, we notice that $\beta^{(i)}_j=\alpha_j=c_j$ if $j\in\wh U$, $\beta^{(i)}_j=\alpha_j$ if $j\in U\cap G'_i$, and $\beta^{(i)}_j=\alpha_j=c_j$ if $j\in U\cap \wh G'_i$. This implies $\be^{(i)}(U)\in \dE{U\cap G'_i}{\al(U)}$. Moreover,
$$\bigcap_{i=1}^r(U\cap G'_i)=U\cap(\bigcap_{i=1}^r G'_i)=U\cap \wh F $$
and we conclude setting $G_i:= U \cap G'_i$.
\end{proof}

Now we extend some other properties proved in the previous sections. The proofs are analogous to those seen for Proposition \ref{usodiG2} and Lemma \ref{minimidelta}.

\begin{lem} 
\label{varsubspaces}
Let $S$ be a good semigroup and $U\subseteq I$, $U\neq \emptyset$.
\begin{enumerate}
    \item \label{usodiG2sub} Assume $\al(U) \in E(U)$, $F \subsetneq U$ and there exists a set $H\subsetneq U$ of cardinality $|U|-1$ containing $F$ and such that $\dE{G}{\al(U)} = \emptyset$ for every $F \subseteq G \subseteq H$. Then $\dE{\widehat{F}}{\al(U)} = \emptyset$.
    \item \label{minimidelta1sub}If $\be(U) \in \dE{F}{\al(U)}$ and $\te(U) \in \dE{G}{\al(U)}$, then $\be(U) \wedge \te(U) \in \dE{F \cup G}{\al}$ if $F \cup G \subsetneq U$ and $\be(U) \wedge \te(U) = \al(U)$ if $F \cup G = U$.
    \item \label{minimidelta2sub}Let $\be(U) \in \dE{F}{\al(U)}$ be consecutive to $\al(U)$ in $E$. Then $\dE{H}{\al(U)}= \emptyset$ for every set $H$ such that $ F \subsetneq H \subsetneq U$.
\end{enumerate}
\end{lem}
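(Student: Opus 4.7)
The plan is to handle each of the three parts in the order \ref{minimidelta1sub}, \ref{minimidelta2sub}, \ref{usodiG2sub}, since part \ref{minimidelta1sub} is essentially definitional and supplies what is needed for \ref{minimidelta2sub}, while part \ref{usodiG2sub} will require more structural work. Throughout, since each $\Delta$-set indexed at $\al(U)$ uses a set contained in $U$, by $\wh{F}$ inside $\dE{\wh F}{\al(U)}$ I will understand $U\setminus F$, consistently with the earlier usage $\tdE{U\setminus F}{\al(U)}$ in Proposition \ref{subspacelem}.

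For part \ref{minimidelta1sub}, I would use the definition of the sum operation on subspaces: set $\he(U):= \be(U)\wedge \te(U)=(\be\wedge\te)(U)$, which is again a subspace of $E$ by Proposition \ref{subspacelem}.\ref{subspacelem1}. A direct check on components, exactly as in Lemma \ref{minimidelta}.\ref{minimidelta1}, shows $\he_j=\alpha_j$ precisely when $j\in F\cup G$ (on $U$) and $\he_j=c_j$ on $\wh U$, from which the two cases $F\cup G\subsetneq U$ and $F\cup G=U$ follow immediately.

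Part \ref{minimidelta2sub} is then a one-line adaptation of Lemma \ref{minimidelta}.\ref{minimidelta2}: suppose by contradiction $\de(U)\in\dE{H}{\al(U)}$ with $F\subsetneq H\subsetneq U$. Applying part \ref{minimidelta1sub} to $\be(U)\in\dE{F}{\al(U)}$ and $\de(U)\in\dE{H}{\al(U)}$ yields $\be(U)\wedge\de(U)\in\dE{F\cup H}{\al(U)}=\dE{H}{\al(U)}$, and this element sits strictly between $\al(U)$ and $\be(U)$ in the subspace order, contradicting their being consecutive in $E$.

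Part \ref{usodiG2sub} is the main step, and I expect to mimic the proof of Proposition \ref{usodiG2} using the subspace version of (G2) given by Proposition \ref{propG2sub} instead of Remark \ref{remarkG2}. Let $\{i^*\}:=U\setminus H$. Assume by contradiction that $\be(U)\in\dE{U\setminus F}{\al(U)}$. Applying Proposition \ref{propG2sub} with $U\setminus F$ in the role of the index set $F$ there, I obtain subspaces $\be^{(1)}(U),\ldots,\be^{(r)}(U)$ with $\be^{(k)}(U)\in\dE{G_k}{\al(U)}$, each $G_k\supseteq \wh{(U\setminus F)}\cap U=F$, and $G_1\cap\cdots\cap G_r=F$. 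Since $i^*\in U\setminus F$ is not in the intersection, there exists $k$ with $i^*\notin G_k$, i.e. $F\subseteq G_k\subseteq U\setminus\{i^*\}=H$. But then $\be^{(k)}(U)\in\dE{G_k}{\al(U)}$ contradicts the hypothesis that $\dE{G}{\al(U)}=\emptyset$ for all $F\subseteq G\subseteq H$. The main obstacle here is bookkeeping between the ambient complement $I\setminus F$ and the relative complement $U\setminus F$, which is precisely what Proposition \ref{propG2sub} (with its intersection condition $\bigcap G_k=\wh F\cap U$) is designed to resolve, so no further technical input is required.
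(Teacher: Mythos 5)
Your proposal is correct and matches the paper's intent exactly: the paper gives no written argument for this lemma beyond the remark that ``the proofs are analogous to those seen for Proposition \ref{usodiG2} and Lemma \ref{minimidelta}'', and your parts \ref{minimidelta1sub} and \ref{minimidelta2sub} are precisely those adaptations, with the correct reading of $\wh F$ as $U\setminus F$ and of $\wedge$ via Proposition \ref{subspacelem}.\ref{subspacelem1}. For part \ref{usodiG2sub} you invoke Proposition \ref{propG2sub} (whose intersection condition $\bigcap G_k=\wh F\cap U$ forces some $G_k$ to avoid $i^*$) where the paper's template (Proposition \ref{usodiG2}) uses Remark \ref{remarkG2} directly; since no subspace analogue of that remark is stated, this is the natural available tool and the argument is equivalent.
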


It is also possible to rephrase Theorems \ref{bianchi} and \ref{neri} for subspaces.

\begin{teo}
\label{bianchienerisub}
Let $S$ be a good semigroup, $F\subsetneq U \subseteq I$, $E\subseteq S$ a good ideal and $A=S\setminus E$. Consider $\al(U)\in S(U)$, $\be(U) \in \ds{F}{\al(U)}$ and $\te(U)\in \ds{G}{\al(U)}$ with $(\wh F \cap U) \subseteq G \subsetneq U$ and assume $\te(U)$ and $\al(U)$ to be consecutive in $S(U)$.
\begin{enumerate}
\item \label{bianchisub} %Let $\al(U)\in S(U)$, $\be(U) \in \ds{F}{\al(U)}\cap A_i(U)$
Assume $\be(U) \in A_i(U)$ and $\ds{\wh F\cap U}{\al(U)}\subseteq A(U)$. %Let $G$ such that $\wh F \cap U\subseteq G \subsetneq U$. 
%Let $\te(U)\in \ds{G}{\al(U)}$ with $\wh F \cap U\subseteq G \subsetneq U$ and assume $\te(U)$ and $\al(U)$ to be consecutive in $S(U)$. 
\begin{itemize} 
\item If $\td{G}{\al(U)}\subseteq A(U)$, then $\al(U)\in A_h(U)$ with $h \leq i$.
\item If $\al(U)\in A(U)$ and $\td{F}{\al(U)}\subseteq A_i(U)$ then $\al(U)\in A_h(U)$ with $h<i$.
\end{itemize}
\item \label{nerisub} %Let $\al(U)\in A_i(U)$ and consider $G$ such that $\wh F \cap U\subseteq G \subsetneq U$.  Let $\te(U) \in \tdE{U\setminus G}{\al(U)}$ with $\te(U)$ and $\al(U)$ consecutive in $S(U)$. 
Assume $\al(U) \in A_i(U)$. If $\dE{F}{\al(U)}\neq \emptyset$, then $\te(U)\in A_i(U)$.
\end{enumerate}
\end{teo}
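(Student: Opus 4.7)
The plan is to reduce both parts of this theorem to the already-proved element versions, Theorems \ref{bianchi} and \ref{neri}, by passing to canonical representatives of the subspaces. I will identify each subspace $\bs{\rho}(U)$ with the unique element $\bs{\rho}\in\N^d$ satisfying $\rho_j=(c_E)_j$ for $j\in\wh U$. Setting $F' := F\cup\wh U$ and $G' := G\cup\wh U$, Lemma \ref{subspaceprelem}.\ref{subspaceprelem1} gives that $\be(U)\in\ds{F}{\al(U)}$ is equivalent to $\be\in\ds{F'}{\al}$, and $\te(U)\in\ds{G}{\al(U)}$ is equivalent to $\te\in\ds{G'}{\al}$. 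Note that $F',G'\subsetneq I$ because $F,G\subsetneq U$, and $\wh{F'}=\wh F\cap U\subseteq G\subseteq G'$, which matches the set-inclusion hypothesis of Theorem \ref{bianchi}.

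Next I will descend the level and consecutiveness conditions. By Proposition \ref{infap} applied with $F=\wh U$ (which is non-empty and properly contained in $I$, since $U\subsetneq I$ and $U\neq\emptyset$), $\bs{\rho}(U)\in A_i(U)$ is equivalent to $\bs{\rho}\in A_i$, and subspace inclusions such as $\ds{\wh F\cap U}{\al(U)}\subseteq A(U)$ correspond to element inclusions like $\ds{\wh{F'}}{\al}\subseteq A$. For the consecutiveness, I will show that $\al(U)$ and $\te(U)$ being consecutive in $S(U)$ forces $\al$ and $\te$ to be consecutive in $S$: any $\de\in S$ with $\al\leq\de\leq\te$ must satisfy $\delta_j=(c_E)_j$ for $j\in\wh U$ by squeezing, so Proposition \ref{infsem} promotes $\de$ to a canonical representative of a subspace $\de(U)\in S(U)$ sandwiched between $\al(U)$ and $\te(U)$, forcing $\de\in\{\al,\te\}$.

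With the translations in place, part 1 will follow by applying the two sub-statements of Theorem \ref{bianchi} to $\al,\be,\te$ with $F',G'$ in place of $F,G$, and converting the element-level conclusions back to $A_h(U)$ via Proposition \ref{infap}. For part 2, I will invoke Theorem \ref{neri} applied to $\al,\te$: the hypothesis $\dE{F}{\al(U)}\neq\emptyset$ becomes $\dE{F'}{\al}\neq\emptyset$, and the containment $\wh{G'}=\wh G\cap U\subseteq F\subseteq F'$ (which follows from $\wh F\cap U\subseteq G$ by taking complements inside $U$) combined with Remark \ref{remdelta}.\ref{remdelta2} gives $\dE{F'}{\al}\subseteq\tdE{\wh{G'}}{\al}$, so $\tdE{\wh{G'}}{\al}\neq\emptyset$, which is exactly the hypothesis of Theorem \ref{neri}. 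The resulting $\te\in A_i$ translates back to $\te(U)\in A_i(U)$.

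The main obstacle will be carefully managing the interplay between complements taken inside $U$ (as in the subspace $\Delta$-notation) and those taken inside $I$ (as in the element $\Delta$-notation), and rigorously justifying the descent of the consecutiveness condition from $S(U)$ to $S$ via Proposition \ref{infsem}. Once these set-theoretic verifications are carried out, no additional semigroup-theoretic argument is needed beyond invoking Theorems \ref{bianchi} and \ref{neri}.
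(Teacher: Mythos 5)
Your proposal is correct and takes essentially the same route as the paper's own proof: both reduce to Theorems \ref{bianchi} and \ref{neri} by identifying each subspace with its canonical representative (components equal to those of $\bs{c}_E$ on $\wh U$), translating the $\Delta$-conditions via Lemma \ref{subspaceprelem} with $F'=F\cup\wh U$, $G'=G\cup\wh U$, and descending level membership, the inclusion $\ds{\wh F\cap U}{\al(U)}\subseteq A(U)\Rightarrow\ds{\wh{F'}}{\al}\subseteq A$, and consecutiveness via Propositions \ref{infsem} and \ref{infap}. The only cosmetic slip is the parenthetical ``since $U\subsetneq I$'': the statement allows $U=I$, but in that case subspaces are points and the theorem is literally Theorems \ref{bianchi} and \ref{neri}, so nothing is lost.
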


\begin{proof}
1. Directly by definition of subspace we have $\al \in S$ and $\be\in A_i$.  By Lemma \ref{subspaceprelem}.\ref{subspaceprelem1}, we also have $\be\in\ds{F\cup \wh U}{\al}$, $\te\in \ds{G}{\al}$ and $\te$,$\al$ consecutive in $S$.\\
We prove that $\ds{U\setminus F}{\al}\subseteq A$. If $\he \in \ds{U\setminus F}{\al}$, we can consider $\de \in \N^d$ such that: $\delta_i=c_i$ for $i\in \wh U$, $\delta_i=\eta_i$ for $i\in U$. Clearly $\de(U)$ is an $U$-subspace by definition and $\de(U)\in \ds{\wh F\cap U}{\al(U)}\subseteq A(U)$, hence, by Proposition \ref{infap}, $\de, \he \in A$. \\
Assuming $\td{G}{\al(U)}\subseteq A(U)$, then, by Lemma \ref{subspaceprelem}.\ref{subspaceprelem2}, $\td{G}{\al}\subseteq A$ and $\al\in A$. Hence, by Theorem \ref{bianchi}, $\te \in A_h$ with $h\leq i$ and clearly $\te(U) \in A_h(U)$.
Assuming also $\al(U)\in A(U)$, we get $\al\in A$. Furthermore, again by Lemma \ref{subspaceprelem}.\ref{subspaceprelem2}, $\td{\wh F \cap G}{\al}\subseteq A$. Hence, by Theorem \ref{bianchi}, $\al \in A_h$ with $h\leq i$ and we conclude in the same way. 

2. The assumption implies $\al\in A_i$. Since $\dE{F}{\al(U)}\neq \emptyset$, by Lemma \ref{subspaceprelem}.\ref{subspaceprelem1},  $\dE{F\cup \wh U}{\al}\neq \emptyset$. Since $\te(U)\in \tdE{G}{\al(U)}$, by Lemma \ref{subspaceprelem}.\ref{subspaceprelem2}, $\te\in \tdE{G}{\al}$ and $\te $ is consecutive to $\al$ in $S$. By Theorem \ref{neri}, $\te\in A_i$ and we can conclude that $\te(U)\in A_i(U)$.
\end{proof}

\section{The number of levels of the Ap\'{e}ry set}

In this section we compute the number of levels of the complement of a good ideal generated by a single element. For a good semigroup $S \subseteq \N^d$ and $\om \in S$, we consider the ideal $E= \om + S$. Its complement $A = S \setminus E$ is classically called the Ap\'{e}ry set of $S$ with respect to $\om$ and denoted by $\Ap(S, \om)$. Often the Ap\'{e}ry set is considered with respect to the minimal nonzero element $\bs{e}$ but for the purpose of this section it is enough to consider it with respect to any given element $\om$. 
In Theorem \ref{main} we extend to the $d$-branches case both \cite[Theorem 3]{DAGuMi} and \cite[Theorem 5]{DAGuMi}. In particular we prove that the number of levels of the partition of $\Ap(S, \om)$ is equal to the sum of the components of $\om$. This provides an alternative proof of \cite[Theorem 3]{DAGuMi} in the case $d=2$. We recall the content of \cite[Theorem 5]{DAGuMi}, with the slight change of considering $\Ap(S, \om)$ for a non-specific $\om \in S$. %\bf spiegare che questa è diversa dalla dimostrazione di d'anna e per d=2 viene molto più semplice, oltre a descrivere più dettagli \rm

\begin{prop} \rm (\cite[Theorem 5]{DAGuMi}) \it
\label{infiniti}
Let $S \subseteq S_1 \times S_2 \subseteq \N^2$ be a local good semigroup, let $\om=(w_1, w_2) \in S$. Let $ A:=\Ap(S,\om)=\bigcup_{i=1}^N A_i $ be the Ap\'{e}ry set of $S$ with respect to $\om$. Assume $w_1 \geq w_2$. Then: \\
 The levels $A_N, A_{N-1}, \dots, A_{N-w_1+1}$ are infinite and in particular there exist $s_1, \ldots, s_{w_1} \in S_1$ such that for every $i=1,\ldots,w_1 $, the set $\Delta^S_1(s_i,0)$ is infinite and eventually contained in the level $A_{N-i+1}$. \\
 Furthermore, all the levels $A_i$ with $i < N-w_1+1$ are finite. 
 %\item[(3)] The levels $A_{e-e_1}, \dots, A_2, A_{1}$ are finite.
 If $w_1 \leq w_2$ the correspondent analogous conditions hold.
\end{prop}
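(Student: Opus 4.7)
The plan is to adapt the argument of \cite[Theorem 5]{DAGuMi}, which was stated for the minimal nonzero element $\bs{e}$, to an arbitrary $\om \in S$. The principal good ideal $E = \om + S$ has conductor $\bs{c}_E = \bs{c} + \om$, so in particular $\gamma_{E,1} = c_1 + w_1 - 1$, and the main rigidity tool from Section 2 is Proposition \ref{infap} applied to this $E$.

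First I would identify $w_1$ candidate infinite vertical rays in $\Ap(S,\om)$. The $w_1$ first coordinates $s = c_1, c_1+1, \ldots, c_1 + w_1 - 1$ all lie in $S_1$ and, by (G3), each gives an infinite tail $\{(s,y) : y \geq c_2\} \subseteq S$, hence $\Delta_1^S(s,0)$ is infinite. For each such $s$ and $y$ large, either $s < w_1$ forces $s-w_1 < 0$ or else $s - w_1 < c_1$, and I would check that this prevents $(s-w_1, y-w_2)$ from belonging to $S$ for $y$ sufficiently large; hence $(s,y) \in \Ap(S,\om)$ eventually. The iterative chain argument of \cite{DAGuMi} — repeatedly subtracting $\om$ along infinite rays and using the local finiteness of $S_1$ below its conductor — then shows that exactly $w_1$ rays survive eventually in $\Ap(S,\om)$, giving the desired values $s_1 < s_2 < \cdots < s_{w_1}$.

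Next I would use Proposition \ref{infap} to pin each tail into a single level. For each $s_i$, the element $(s_i, c_{E,2})$ lies in $\Ap(S,\om)$ (by the previous step) and satisfies $\bs{c}_E \in \Delta_2^S((s_i, c_{E,2}))$, so Proposition \ref{infap} with $F=\{2\}$ places the whole set $\widetilde\Delta_1((s_i, c_{E,2}))$, i.e.\ the infinite tail $\{(s_i, y) : y \geq c_{E,2}\}\cap S$, in a single level $A_{k_i}$. To see that $k_1 < k_2 < \cdots < k_{w_1} = N$, I would observe that $(s_i, c_{E,2}) \ll (s_j, c_{E,2}+1)$ whenever $i < j$, so Lemma \ref{prelem}.\ref{prelem3} yields $k_i < k_j$; moreover $s_{w_1} = c_1 + w_1 - 1 = \gamma_{E,1}$, so the top tail sits in $\Delta^S(\ga_E) = A_N$ by Remark \ref{lastlev}, forcing $k_{w_1} = N$ and hence $k_i = N - w_1 + i$.

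Finally I would show that all earlier levels are finite: outside the $w_1$ identified vertical rays (and the at most $w_2 \leq w_1$ horizontal rays, which fall inside the same top levels already accounted for), the Ap\'ery set is confined to the bounded region below $\bs{c}_E$. The main obstacle is the counting in the first step — verifying that exactly $w_1$ vertical rays (no more, no less) remain eventually in $\Ap(S,\om)$. This is where the iterated subtraction argument of \cite{DAGuMi} and the hypothesis $w_1 \geq w_2$ are essential: the assumption $w_1 \geq w_2$ ensures that the $w_1$ vertical rays alone generate the infinite levels, whereas horizontal rays could at best contribute $w_2$ of them, so they cannot create additional infinite levels beyond those already produced.
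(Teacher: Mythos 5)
The paper itself offers no independent argument here: its proof is the single sentence that the proof of \cite[Theorem 5]{DAGuMi} for $\om = \e$ carries over, so your proposal must be judged as a reconstruction of that argument. Your overall strategy (locate the $w_1$ surviving vertical rays by descending chains in steps of $w_1$, use Proposition \ref{infap} to place each full tail in a single level, order those levels by domination, and confine the rest of $A$ to a bounded box) is the right one and matches the base case $k=1$ of Theorem \ref{main}. One inaccuracy along the way: the claim that $s - w_1 < c_1$ already ``prevents $(s-w_1,y-w_2)$ from belonging to $S$ for $y$ sufficiently large'' is false --- the fiber of $S$ over $s-w_1$ may contain an infinite tail even though $s-w_1 < c_1$, which is exactly why the descending-chain iteration is needed. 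You invoke that iteration afterwards, so this is self-correcting, but the intermediate claim as written is wrong.

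The genuine gap is in the level assignment. From $(s_i, c_{E,2}) \ll (s_j, c_{E,2}+1)$ for $i<j$ you correctly get $k_1 < k_2 < \cdots < k_{w_1} = N$, but the conclusion ``hence $k_i = N-w_1+i$'' does not follow: a strictly increasing sequence of $w_1$ integers ending at $N$ only yields $k_i \leq N-w_1+i$, and nothing you have said rules out skipped levels. The missing step --- which is the actual crux of \cite[Theorem 5]{DAGuMi} and of the base case of Theorem \ref{main} --- is that consecutive rays lie in consecutive levels. It can be supplied with tools already in the paper: by Lemma \ref{prelem}.\ref{prelem2}, every point of the ray at $s_j$ (with $k_j<N$) has an element of $A_{k_j+1}$ above it; taking the point high enough, any such element has second coordinate at least $c_{E,2}$, hence first coordinate less than $c_{E,1}$, and by Proposition \ref{infap} it lies on one of the full tails at some $s_l$ with $s_l \geq s_j$, so $k_j+1=k_l$ for some $l>j$, forcing $k_{j+1}=k_j+1$. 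Note that the finiteness of the levels below $A_{N-w_1+1}$ also hinges on this point: without the analogous lower bound for the horizontal rays you cannot exclude that one of them sits in a level below $N-w_1+1$ and makes it infinite. So the obstacle you single out (counting the rays, which reduces to residue classes modulo $w_1$) is in fact the routine part; the consecutiveness of the levels is where the real work lies, and your write-up replaces it with an invalid inference.
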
 

\begin{proof} One can use the same proof done in \cite[Theorem 5]{DAGuMi} in the case $\om = \e$.
\end{proof}

Before to prove the main theorem we need another result involving next definition. In the case $d=3$ this definition is explained in Figure \ref{fig:notationsH}.
\begin{defi}
\label{insiemone} 
For $U \subseteq I$ and 
for a subspace $\al(U)$ and $k \in U$ we set $$ H_k(\al(U)):= \lbrace \be(U) \subseteq S \hspace{0.1cm}|\hspace{0.1cm} \alpha_k = \beta_k \rbrace.$$
In particular, for $\al \in S$ we set $$ H_k(\al):= H_k(\al(I))= \lbrace \al \in S \hspace{0.1cm}|\hspace{0.1cm} \alpha_k = \beta_k \rbrace. $$
\end{defi}
%\begin{comment}

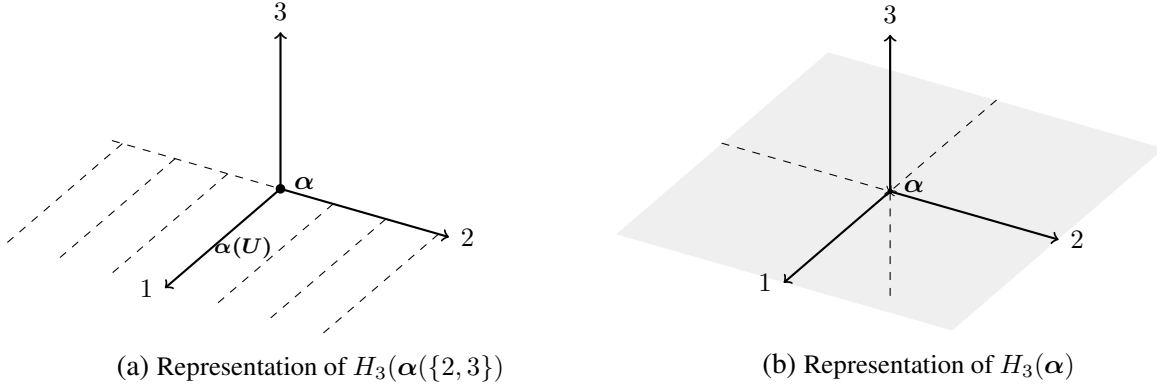
\begin{figure}[H]
\centering

\begin{subfigure}{.5\textwidth}
 \tdplotsetmaincoords{60}{120}
\begin{tikzpicture}[tdplot_main_coords, scale=0.8, font=\footnotesize]

%\filldraw[draw=white, fill=black!20, fill opacity=0.3]         
%            (0,0,0)
%            -- (3.5,0,0)
  %          -- (3.5,2.8,0)
        %    -- (0,2.8,0)
       %     -- cycle; 
            
\draw[thick,->] (0,0,0) -- (3.8,0,0) node[anchor= east]{$1$};
\draw[thick,->] (0,0,0) -- (0,3.2,0) node[anchor= west]{$2$};
\draw[dashed] (0,-3.2,0) -- (0,0,0);
\draw[thick,->] (0,0,0) -- (0,0,3) node[anchor=south]{$3$};

\draw plot [mark=*, mark size=2] coordinates{(0,0,0)}  node at (-0.35,0.25,0) {$\bs{\alpha}$}  node[font=\scriptsize] at (2,0.45,0) {$\bs{\alpha(U)}$};

\draw[dashed] (0,1,0) -- (3.8,1,0);
\draw[dashed] (0,2,0) -- (3.8,2,0);
\draw[dashed] (0,3,0) -- (3.8,3,0);
\draw[dashed] (0,-1,0) -- (3.8,-1,0);
\draw[dashed] (0,-2,0) -- (3.8,-2,0);
\draw[dashed] (0,-3,0) -- (3.8,-3,0);
%\draw plot [mark=*, mark size=1] coordinates{(0,0,0)}  node at (0,0.25,0.25) {$\bs{\alpha}$} node[anchor= south west] at (0,0,1.8) {$\dN{F}{\al}$} node at (1.75,1.4,0) {$\tdN{\wh F}{\al}$};  
\end{tikzpicture}
\caption{\footnotesize{Representation of $H_{3}(\al(\{2,3\})$}}
\end{subfigure}%
\begin{subfigure}{.5\textwidth}
 \tdplotsetmaincoords{60}{120}
\begin{tikzpicture}[tdplot_main_coords, scale=0.8, font=\footnotesize]

\filldraw[draw=white, fill=black!20, fill opacity=0.3]         
            (3.5,3.2,0)
            -- (3.5,-3.2,0)
            -- (-3.5,-3.2,0)
            -- (-3.5,3.2,0)
            -- cycle; 
            
\draw[thick,->] (0,0,0) -- (3.5,0,0) node[anchor= east]{$1$};
\draw[dashed,->] (-3.5,0,0) -- (0,0,0);
\draw[thick,->] (0,0,0) -- (0,3.2,0) node[anchor= west]{$2$};
\draw[dashed,->] (0,-3.2,0) -- (0,0,0);
\draw[thick,->] (0,0,0) -- (0,0,3)
node[anchor=south]{$3$};
\draw[dashed,->] (0,0,-2) -- (0,0,0);

\draw plot [mark=*, mark size=1] coordinates{(0,0,0)}  node at (-0.35,0.25,0) {$\bs{\alpha}$};
%\draw plot [mark=*, mark size=1] coordinates{(0,1,0)}  node[font=\tiny] at (3.5,0.25,0) {$\bs{\alpha(U)}$};

%\draw[dashed] (0,1,0) -- (3,1,0);
%\draw[dashed] (0,2,0) -- (3,2,0);
%\draw[dashed] (0,3,0) -- (3,3,0);
%\draw[dashed] (0,-1,0) -- (3,-1,0);
%\draw[dashed] (0,-2,0) -- (3,-2,0);
%\draw[dashed] (0,-3,0) -- (3,-3,0);
%\draw plot [mark=*, mark size=1] coordinates{(0,0,0)}  node at (0,0.25,0.25) {$\bs{\alpha}$} node[anchor= south west] at (0,0,1.8) {$\dN{F}{\al}$} node at (1.75,1.4,0) {$\tdN{\wh F}{\al}$};  
\end{tikzpicture}
 \caption{\footnotesize{Representation of $H_{3}(\al)$}}
\end{subfigure}%
\caption{\footnotesize{A graphical representation of Definition \ref{insiemone}.}}
\label{fig:notationsH}
\end{figure}
%\end{comment}

\begin{prop}
\label{cambioelemento}
Let $S$ be a good semigroup, $E \subseteq S$ a good ideal and $A= S \setminus E$. Take $U \subseteq I$. Suppose that $\al(U)$ is a subspace of maximal dimension contained in the level $A_i$ (equivalently $A_i$ does not contain subspaces of dimension $\geq d-|U| + 1$). Then, fixed $k \in U$, there exists $\be(U) \in A_i(U)$ such that $H_k(\be(U)) \subseteq A(U)$ and $\beta_k\geq \alpha_k$.
\end{prop}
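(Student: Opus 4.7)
The plan is to argue by contradiction, iteratively constructing subspaces of $A_i(U)$ with strictly increasing $k$-th coordinate and then invoking the maximality of dimension of $\al(U)$ to get a contradiction. Suppose, toward a contradiction, that for every $\be(U) \in A_i(U)$ with $\beta_k \geq \alpha_k$ there exists $\de(U) \in E(U)$ such that $\delta_k = \beta_k$, so that $H_k(\be(U)) \not\subseteq A(U)$.

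The first step is to use the maximality of the dimension of $\al(U)$ to show that every $\be(U) \in A_i(U)$ satisfies $\beta_k < c_k$, where $\bs{c}_E = (c_1,\ldots,c_d)$ is the conductor of $E$. Indeed, suppose $\beta_k \geq c_k$ for some $\be(U) \in A_i(U)$. Replacing $\be(U)$ by $\be(U) \wedge \bs{c}_E(U)$ (Proposition \ref{subspacelem}.\ref{subspacelem1}) and iterating if necessary, one reduces to the situation in which $\beta_k = c_k$ and $\beta_j < c_j$ for every $j \in U \setminus \{k\}$; then the representative element $\be$ satisfies $\bs{c}_E \in \ds{\wh U \cup \{k\}}{\be}$, and Proposition \ref{infap} forces the larger subspace $\be(U \setminus \{k\})$ to be contained in $A_i$, contradicting the maximality of the dimension of $\al(U)$.

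Starting from $\be^{(0)}(U) := \al(U)$, I would construct inductively a sequence $\be^{(0)}(U), \be^{(1)}(U), \ldots$ in $A_i(U)$ with $\beta^{(m+1)}_j = \beta^{(m)}_j$ for $j \in U \setminus \{k\}$ and $\beta^{(m+1)}_k > \beta^{(m)}_k$. For the inductive step, by hypothesis there exists $\de^{(m)}(U) \in E(U)$ with $\delta^{(m)}_k = \beta^{(m)}_k$. Using property (G2) for subspaces (Proposition \ref{subspacelem}.\ref{subspacelem2}) together with the $\wedge$-closure, one produces a subspace lying in $\dE{F_m}{\be^{(m)}(U)}$ for some $F_m \subsetneq U$ with $k \in F_m$. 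Setting $G := U \setminus \{k\}$, the inclusion $\wh{F_m} \cap U \subseteq G$ holds because $k \in F_m$, so Theorem \ref{bianchienerisub}.\ref{nerisub} furnishes the desired $\be^{(m+1)}(U) \in \ds{G}{\be^{(m)}(U)} \cap A_i(U)$ consecutive to $\be^{(m)}(U)$.

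The sequence $(\beta^{(m)}_k)_{m\geq 0}$ is then strictly increasing in $\N$ but bounded above by $c_k$ by the first step, yielding the desired contradiction. The main obstacle is the passage from a possibly incomparable $\de^{(m)}(U) \in E(U)$ with $\delta^{(m)}_k = \beta^{(m)}_k$ to an actual element of $\dE{F_m}{\be^{(m)}(U)}$ with $k \in F_m$; this requires carefully combining the infimum operation on subspaces (Proposition \ref{subspacelem}.\ref{subspacelem1}), property (G2), and Proposition \ref{infap} to ensure that the constructed subspace has both the correct position with respect to $\be^{(m)}(U)$ and the correct ideal membership.
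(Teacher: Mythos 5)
Your overall skeleton is the same as the paper's: build a sequence of subspaces in $A_i(U)$ with strictly increasing $k$-th coordinate, and show it must terminate because the maximality of the dimension of $\al(U)$, via Proposition \ref{infap} applied to $\de\wedge\bs{c}_E$, forbids any member of $A_i(U)$ from having $k$-th coordinate $\geq c_k$. That termination argument is correct and is exactly the paper's. But the inductive step has a genuine gap, and it is precisely the point you flag as ``the main obstacle'' and then leave unresolved. Your contradiction hypothesis only gives you some $\de^{(m)}(U)\in E(U)$ with $\delta^{(m)}_k=\beta^{(m)}_k$; to run Proposition \ref{propG2sub} and Theorem \ref{bianchienerisub}.\ref{nerisub} you need $\tdE{k}{\be^{(m)}(U)}\neq\emptyset$, i.e.\ a subspace of $E$ that agrees with $\be^{(m)}(U)$ in the $k$-th coordinate \emph{and} dominates it in the remaining coordinates of $U$. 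These are not the same thing: $\de^{(m)}(U)$ may have $\delta^{(m)}_l<\beta^{(m)}_l$ for some $l$, and no amount of taking infima with $\be^{(m)}(U)$ will manufacture an element of $\tdE{k}{\be^{(m)}(U)}$ from it. The paper spends most of its proof on exactly this case: it descends to a consecutive predecessor $\he(U)<\be^{(m)}(U)$ with $\eta_k=\beta^{(m)}_k$, splits according to whether $\he(U)$ lies in $A_i(U)$, in $A_{i-1}(U)$, or in $E(U)$, verifies via Lemma \ref{varsubspaces} that the relevant $\Delta^E$-sets at $\he(U)$ are empty, and then applies the \emph{first} part of Theorem \ref{bianchienerisub} (not the second) to place the new element $\te(U)\in\ds{G}{\he(U)}$, $k\notin G$, in the level $A_i(U)$. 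Without this case your induction simply cannot get started whenever $H_k(\al(U))$ meets $E(U)$ only at subspaces incomparable with $\al(U)$.

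A secondary, smaller problem: you require $\beta^{(m+1)}_j=\beta^{(m)}_j$ for all $j\in U\setminus\{k\}$, i.e.\ $\be^{(m+1)}(U)\in\ds{U\setminus\{k\}}{\be^{(m)}(U)}$. Neither Proposition \ref{propG2sub} nor the descent through $\he(U)$ produces such an element, and $\ds{U\setminus\{k\}}{\be^{(m)}(U)}$ may well be empty; Theorem \ref{bianchienerisub}.\ref{nerisub} assumes the consecutive element $\te(U)\in\ds{G}{\al(U)}$ exists, it does not furnish one. Fortunately this requirement is also unnecessary: all the termination argument needs is $\theta_k>\beta^{(m)}_k$, which any $G$ with $k\notin G$ provides, so you should drop that condition rather than try to enforce it. Likewise the reduction in your first step to ``$\beta_k=c_k$ and $\beta_j<c_j$ for every $j\in U\setminus\{k\}$'' is not achievable by taking meets with $\bs{c}_E$, but it is not needed either: it suffices that the set $V=\{j:\beta_j\geq c_j\}$ contains $\wh U\cup\{k\}$ and is proper, which already yields a subspace of dimension $\geq d-|U|+1$ inside $A_i$.
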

 
 \begin{proof}
We first show that, if $H_k(\al(U)) \nsubseteq A(U)$, we can find $\te(U) \in A_i(U)$ such that $\theta_k > \alpha_k$.
 Consider the case in which $\tdE{k}{\al(U)} \neq \emptyset$
 and therefore assume $\dE{F}{\al(U)} \neq \emptyset$ for some $U\supsetneq F \supseteq \{ k \} $. By Proposition \ref{propG2sub}, there exists $\te(U) \in \ds{G}{\al(U)}$ consecutive to $\al(U)$ with $U\supsetneq G \supseteq \wh F\cap U$. Furthermore we can choose $G$ such that $k \not \in G$ and thus $\theta_k > \alpha_k$. %$\al(U)=\be(U)\wt \te^1(U)\wt\ldots\wt\te^r(U)$, G_1\cap\ldots,G_r=\whF \cap U$. If $k\in G_i$ for all $i$, then $k\in \wh F\cap U$, which contradicts $\{k\}\subseteq F$
 By Theorem \ref{bianchienerisub}.\ref{nerisub}, $\te(U) \in A_i(U)$ and this case is complete. 
 
 Hence, assume on the other hand $\tdE{k}{\al(U)} = \emptyset$, but there exists $\bs{\tau}(U) \in E(U)$ such that $\tau_k = \alpha_k$ and $\tau_l < \alpha_l$ for some $l \neq k$. From this assumption, it follows that there exists $\he(U) < \al(U)$ such that $\eta_k= \alpha_k$ and $\he(U)$ and $\al(U)$ are consecutive (in the case $\tau_j > \alpha_j$ for some $j$ one can consider $\bs{\tau}(U) \wedge \al(U)$). Say that $\al(U) \in \ds{F}{\he(U)} $ for $F \supseteq \{ k \} $. Now, if $\he(U) \in A_i(U)$ we can replace $\al(U)$ by $\he(U)$ and iterate the process, considering also, if needed, the case $\dE{F}{\he(U)} \neq \emptyset$ as done above. Thus we can restrict to the case in which $\he(U) \in E(U)$ or $\he(U) \in A_{i-1}(U)$.
 
 In any case, using Proposition \ref{propG2sub} as before, we can find $\te(U) \in \ds{G}{\he(U)} $ consecutive to $\he(U)$ with $G \supseteq \wh F$ and $k \not \in G$ (in particular $\theta_k> \alpha_k$). We need to prove that $\te(U) \in A_i(U)$. 
 
 Since $\he(U)$ and $\al(U)$ are consecutive, by Lemma \ref{varsubspaces}.\ref{minimidelta2sub}, $\ds{H}{\he(U)} = \emptyset$ for every $H$ with $U\supsetneq H \supsetneq F$ . Furthermore, if $H \subseteq F$ and $k \in H$, one can easily observe that $\ds{H}{\he(U)} \subseteq \lbrace \al(U) \rbrace \cup \td{k}{\al(U)}$ and thus  $\dE{H}{\he(U)} = \emptyset$. In particular $\dE{F}{\he(U)}$ and $ \dE{\wh G \cap U}{\he(U)} $ are both empty (notice that $\wh G\cap U \subseteq F$). Now, if $\he(U) \in E(U)$, by Proposition \ref{varsubspaces}.\ref{usodiG2sub}, this implies $\dE{G}{\he(U)} = \emptyset$ and $\dE{\wh F}{\he(U)} = \emptyset$. If instead $\he(U) \in A_{i-1}(U)$, we get the same result, observing that if $\dE{G}{\he}\cup\dE{\wh F\cap U}{\he} \neq \emptyset$, by Theorem \ref{bianchienerisub}.\ref{nerisub} we would have $\al(U) \in A_{i-1}(U)$, a contradiction.
 
 By all these facts, it follows that $\te(U) \in A(U)$, moreover, since $\te(U)$, $\he(U)$ and $\al(U)$ are consecutive and the assumptions of Theorem \ref{bianchienerisub}.\ref{bianchisub} are satisfied, we get $\te(U) \in A_i(U)$.
 
 Now if $H_k(\te) \subseteq A(U)$ we are done, otherwise we iterate the process creating a sequence of elements, all in the level $A_i(U)$, having strictly increasing $k$-th component. If this process does not stop, we reach an element $\de(U) \in A_i(U)$ such that $\de_k \geq (\bs{c}_{E})_k$. Set $\de^{\prime} = \de \wedge \bs{c}_{E}$ and observe that $\bs{c}_{E} \in \ds{V}{\de^{\prime}}$ for some $V$ containing $k$ and $V\supsetneq \wh U$. %Moreover, since $\de \in A$, $\de^{\prime} \neq \bs{c}_{E}$ and $V \neq I$.\\
 By Proposition \ref{infap}, the subspace $\de^{\prime}(\wh V) \in A_i(\wh V) $. This is a contradiction since we assumed $A_i$ to not contain subspaces of dimension larger than $d-|U|$. %Now, going back to the subspace notation, since $\de = \de(U)$, setting $V=U \cup \lbrace k \rbrace$, we get $\de(V) \subseteq A_i$ and this cannot happen by the hypothesis of $A_i$ not containing subspaces of dimension larger than $d - |U|$ \bf citare prop precedenti \rm. 
 It follows that the process described above must instead stop to some element $\be(U) \in A_i(U)$ such that $H_k(\be(U)) \subseteq A(U)$ and $\beta_k\geq \alpha_k$.
\end{proof}

Now we have got enough tools to prove the main theorem. 

\begin{teo}
\label{main}
Let $S \subseteq \N^d$ be a good semigroup, let $\om=(w_1,\ldots,w_d) \in S$ be a nonzero element, and assume without restrictions $w_1\geq w_2 \geq \cdots \geq w_d$. For $k=1, \ldots,d$, set  $$s_k=\sum_{j=1}^k w_j.$$ Let $E=\om + S$, $A= S \setminus E$ and write the partition $A = \bigcup_{i=1}^N A_i$. %For $j=1, \ldots, d$ denote by $\Ap(S_j)$ the Apery Set of the numerical semigroup $S_j$ with respect to the element $w_j$. \\
Then, the level $A_i$ contains some subspace of dimension $d-k$ if and only if $i\geq N-s_k+1$.
In particular, \[N= s_d = \sum_{i=1}^d w_i.\]
%\begin{itemize}
 %   \item $A_i$ contains hyperplanes if and only if $i\geq N-w_1+1$ 
  %  \item $A_i$ contains points if and only if $i\geq N-s_d+1$
%\end{itemize}
\end{teo}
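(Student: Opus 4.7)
The plan is to prove both implications by tracking how the maximum dimension of subspaces changes through the levels, using the $U$-subspace framework of Section 3. The central tools are the subspace version of Property (G2) (Proposition \ref{propG2sub}), Theorem \ref{bianchienerisub} (which controls when an adjacent subspace stays in the same level), and Proposition \ref{cambioelemento} (which, in a level whose maximal subspaces have dimension $d-|U|$, produces a subspace with a full ``hyperplane boundary'').

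For the existence direction, I would show that for each $k \in \{1,\ldots,d\}$ the top $s_k$ levels contain a subspace of dimension $d-k$. The top level $A_N = \Delta(\ga_E)$ already contains every hyperplane $\ds{j}{\ga_E}$ by Remark \ref{lastlev}. To descend, I would fix the branch corresponding to $w_1$ and decrease its coordinate step by step, at each step building a new hyperplane using Proposition \ref{propG2sub} and verifying via Theorem \ref{bianchienerisub}.\ref{bianchisub} that the new hyperplane drops exactly one level. Since $\bs{c}_E = \bs{c} + \om$, decreasing this coordinate by one valid ``$S$-step'' is possible $w_1$ times within the ideal-structure bounds of $E$, yielding hyperplanes in levels $A_N, A_{N-1}, \ldots, A_{N-w_1+1}$. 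Beyond $A_{N-w_1}$, no hyperplanes exist, so I would switch to $(d-2)$-dimensional subspaces (fixing two coordinates) and repeat the descent with $w_2$ further steps, and so on inductively. After $d$ such iterations, each using the next largest component $w_k$, I obtain subspaces of each dimension $d-k$ in the appropriate ranges of levels, totaling $s_d$ levels.

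For the impossibility direction, suppose $A_i$ contains a subspace $\al(U)$ of dimension $d-k$ with $|U|=k$. Using Lemma \ref{prelem}.\ref{prelem2} in the $U$-subspace setting, I would ascend from $A_i$ to $A_N$ one level at a time through $U$-subspaces. Proposition \ref{cambioelemento} would ensure that at each level whose maximal subspace dimension is still $d-k$, the current subspace can be replaced by some $\be(U)$ with $H_j(\be(U)) \subseteq A(U)$ for a chosen $j \in U$, and the ascent to the next level then consumes one unit of $w_j$. Hence the number of $U$-subspace ascents is at most $\sum_{i \in U} w_i \leq s_k$, giving $N - i + 1 \leq s_k$. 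Taking $k=d$ and $\al(U)$ a point in $A_1$ would yield $N \leq s_d$, which combined with the existence direction gives $N = s_d$.

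The main obstacle will be the bookkeeping in the descent/ascent: rigorously showing that every one-level step in $A(U)$ corresponds to consuming exactly one unit of some $w_i$ for $i \in U$, so that the total count of steps is precisely $\sum_{i \in U} w_i$. This amounts to an inner induction on $|U|$, effectively proving a lower-dimensional analogue of the theorem for the $U$-subspace structure. The interplay between the subspace framework of Section 3 and the level-change theorems should make this feasible, but the careful casework when a single coordinate change may or may not force a level change is the technical heart of the proof.
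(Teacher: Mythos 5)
Your high-level architecture coincides with the paper's: an induction on the codimension $k$, carried out in the $U$-subspace framework, with Proposition \ref{propG2sub}, Theorem \ref{bianchienerisub} and Proposition \ref{cambioelemento} as the level-control tools. However, the counting mechanism at the heart of your sketch is not the one that actually works, and as written it has a genuine gap. The paper does not descend ``one coordinate step, one level'' and does not ascend ``consuming one unit of $w_j$ per level''; neither of these is true in general. Consecutive hyperplanes of $A$ in a fixed direction are generally not at consecutive coordinate values, a unit decrement of a coordinate need not land on a subspace of $S$ at all, and a single step between consecutive subspaces need not change the level (that is exactly what Theorems \ref{bianchi} and \ref{neri} are about). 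What makes the count come out to $w_k$ is an arithmetic fact you never invoke: since $E=\om+S$, a $U$-subspace with $H_k\subseteq E$ descends, by repeated subtraction of $\overline{\om}(U)$, to exactly one $U$-subspace of $A$ with $H_k\subseteq A(U)$ in each residue class of the $k$-th coordinate modulo $w_k$. This yields exactly $w_k$ distinguished ``slabs'' $H_k(\al^j(U))$, $j=1,\dots,w_k$, which (after normalizing via $\al^j(U)=\min H_k(\al^j(U))$) are totally ordered and pairwise in distinct consecutive levels. Without this residue-class argument you cannot justify either that there are exactly $w_1$ hyperplanes in $A$ in the base case, or that ``beyond $A_{N-w_1}$ no hyperplanes exist.''

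The impossibility direction is also organized differently in the paper, and your version would be hard to repair as stated. Rather than ascending and charging each level change to a component of $\om$, the paper shows that an arbitrary $U$-subspace in a level $A_i$ with $i<D$ can be replaced, via Proposition \ref{cambioelemento}, by one whose $H_k$-set is entirely in $A(U)$; such a set must then coincide with one of the $w_k$ slabs $H_k(\al^j(U))$ (again by the modular argument), so its level is trapped between that of $\al^{w_k}(U)$ and $D-1$. Finally, pinning $\al^j(U)$ to the level $A_{D-j}$ requires a separate inner induction using Lemma \ref{prelem}.\ref{prelem1} and Proposition \ref{cambioelemento}, not just the observation that successive slabs dominate one another. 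You correctly flagged the one-step bookkeeping as the technical heart of the proof, but the resolution is the congruence-class structure of $E=\om+S$, which is the idea missing from your proposal.
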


\begin{proof}
For $l=1, \ldots, d$ denote by $\eps^l$ the element of $\N^d$ having $l$-th component equal to $1$ and all the others equal to zero. We prove the theorem by induction on $k \geq 1$.

\bf Base case: \rm For $k=1$, we have to show that the levels containing some hyperplane are exactly the levels $A_N, A_{N-1}, \ldots, A_{N-w_1+1}.$ 

For $j=1,\ldots, w_1$, set $\be^{j}= \bs{c}_E + j\eps^1 = (c_1+ j, c_2, \ldots, c_d)$ and observe that the hyperplanes $\be^j(1) \in E(1)$. For each of these $\be^{j}$, %\textcolor{red}{as consequence of $\be^j(1)\in E(1)$, $E=\om+S$ and Proposition \ref{infap}}, 
we show that there exists a unique integer $m_j \geq 1$ such that $\al^j := \be^{j} - m_jw_1 \eps^1 \in A $. %and equivalently, by Proposition \ref{infap}, $\al^j(1) \in A(1)$.
In fact, fixed $j$, we consider the quotient $q^j$ and the remainder $r^j$ of the division of $c_1+j$ by $w_1$.
%dividing $c_1+j$ and $w_1$ we obtain $c_1+j=q^jw_1+r^j$ with $q^j,r^j\in \N$ and $r^j<w_1$. 
Define the element $\bs{\tau}^j\in \N^d$ such that $\tau_1=\beta_1^j$ and $\tau_k=c_k+q^jw_k$ for $k \neq 1$. Since $\be(1)\in E(1)$, it follows that $\bs{\tau^j}\in E$. Moreover, since $E=S+\om$, there exists $m_j\in \N$ such that $\bs{\tau}^j-m_j\om\in A$. Looking at first component, since $E=S\setminus A$, we have
$$q_jw_1\geq m_jw_1\geq m_jw_k.$$
Hence $\bs{\tau}^j-m_j\om$ has the first component equal to $\beta_1^j-m_jw_1$ and the other components are strictly larger than the components of conductor. By Proposition \ref{infap}, we have $\al^j := \be^{j} - m_jw_1 \eps^1 \in A $ and equivalently $\al^j(1) \in A(1)$.
 
Furthermore, for $s \geq 1$, the first component of each element of the form $\bs{c}_E - s\eps^1$ is congruent modulo $w_1$ to the first component of one of the $\be^{j}$ and therefore $\al^1(1), \ldots, \al^{w_1}(1)$ are exactly the only $\{1\}$-hyperplanes %\textcolor{blue}{of the form $\al( 1 )$} 
contained in $A$.
 
By Proposition \ref{infap}, each set $\al^j(1)$ is contained in a unique level. Since $A_N = \Delta(\ga_E)$, we know that $\al^{w_1}(1)\subseteq A_N$.
Now we can relabel the indexes of the hyperplanes $\al^1(1),\ldots,\al^{w_1}(1)$ in the way that $\al_1^1<\cdots<\al_1^{w_1}$.

For $j < N$, given $ \de \in \al^j(1)$, there exists always $\te \in \al^{j+1}(1)$ such that $\de \ll \te$ and hence, by Proposition \ref{infap}, $\al^j(1)$ and $\al^{j+1}(1)$ are contained in two different levels. Such two levels are consecutive since, by construction, any subspace $\be(1)$ such that $\al^j(1) < \be(1) < \al^{j+1}(1)$ is not contained in $A$. This implies that $\al^j(1) \subseteq A_{N - w_1 +j}(1)$ and hence the levels $A_N, A_{N-1}, \ldots, A_{N-w_1+1}$ contain at least one $\{1\}$-hyperplane.
 
Using the same argument for $l \neq 1$, we get that the levels $A_N, A_{N-1}, \ldots, A_{N-w_l+1}$ contain some hyperplane. But, since $w_1 \geq w_l$, the level $A_i$ contains some hyperplane if and only if $i \geq N-w_1+1$.

\bf Inductive step: \rm By induction we can assume $k \geq 2$ and the thesis true for $k-1$. Hence the levels $A_N, \ldots, A_{N-s_{k-1}+1}$ are the only levels containing a subspace of dimension $d-k+1$ and,  by Remark \ref{remsubcont}, clearly also contain subspaces of any lower dimension. Call $D:=N-s_{k-1}+1$. We claim that for $i < D$ the levels $A_i$ contains a subspace of dimension $d-k$ if and only if $i \geq N-s_{k}+1 = D - w_k$. 

Set $U=\lbrace 1,\ldots, k \rbrace$ and $V=\lbrace 1,\ldots, k-1 \rbrace$. By inductive hypothesis, there exists $\de \in S$ such that $\de(V)$ is the minimal $V$-subspace contained in $A_D$ (it has dimension $d-k+1$). Hence, by Remark \ref{remsubcont}, there are clearly infinitely many $U$-subspaces contained in the level $A_D$. Among them, for $j=1,\ldots, w_k$, consider %$\de^1(U), \ldots, \de^{w_k}(U)$ 
$\de^j(U)\in A_D(U)$ minimal with respect to the property of having $\de^j_k \equiv j $ mod $w_k$.
%Let $\bs{\tau}(V)$ be the minimal subspace of the form $\al(V)$ such that $\bs{\tau}(V) > \de(V)$ and $\bs{\tau}(V) \subseteq E$.
  %For simplicity we can identify $\al(U)$ with the element $\al \in \N^k$ sharing the first $k$ coordinates with any element of $\al(U)$. Hence, we assume to consider elements denoted as usual by $\al, \be, \ldots$ and, by this identification, when we say $\al \in \Delta_H(\be)$, we mean implicitly $H \subseteq U $ and if $H = \emptyset$ we can write as usual $\al \ll \be$.  \bf spiegare bene e dire che i teoremi bianchi, neri etc..si possono applicare \rm
For each $j$, we show that  $\tdE{k}{\de^j(U)} \neq \emptyset$. Indeed, after fixing $\de^j(U)$, using the fact that there are infinitely many $U$-subspaces contained in $\de(V)$, we can find $\de^{\prime}(U) \in A_D(U)$ such that %\textcolor{blue}{$\de^{\prime}(U) > \de^j(U) $ and} 
$\delta^{\prime}_k > \delta^j_k$ (observe that since they are in the same level necessarily $\delta^{\prime}_h = \delta^j_h$ for some $h < k$).
  
Now, if $\tdE{k}{\de^j(U)} = \emptyset$, applying Proposition \ref{propG2sub} to $\de^j(U)$ and $\de^{\prime}(U)$, we can write $$ \de^j(U) = \de^{\prime}(U) \wt \be^1(U) \wt \cdots \wt \be^r(U) $$ where $\be^l(U) \in \td{k}{\de^j(U)} \subseteq A(U)$ and we may assume them to be consecutive to $\de^j(U)$ for all $j\in{1,\ldots r}$. By Theorem \ref{bianchienerisub}.\ref{bianchisub}, for every $l$, $\be^l(U) \in A_D(U)$ implying that $\de^j(U)$ has to be in a lower level, a contradiction (for a graphical representation see Figure \ref{fig:maintheorema}).
  
Hence, we can set $\bs{\tau}^j(U)$ to be a minimal element in $\tdE{k}{\de^j(U)}$. We define $\overline{\om}$ such that $\overline{\omega_i}=\omega_i$ if $i\in U$ and $\overline{\omega_i}=c_i$ otherwise, and, starting from $\bs{\tau}^j(U)$ and subtracting multiples of %$\sum_{l=1}^{k}w_l\eps_l$ 
$\overline{\om}(U)$, as we have seen above, we find a unique $m_j \geq 1$ such that $ \al^j(U) := \bs{\tau}^j(U) - m_j \overline{\om}(U) \in A(U) $. %\left(\sum_{l=1}^{k}w_l\eps_l \right) \in A $$
%Now, we establish the following notation, for any $\be(U)$ we set $$ H_k(\be):= \lbrace \al= \al(U) \hspace{0.1cm}|\hspace{0.1cm} \alpha_k = \beta_k \rbrace. $$
Now we consider the set $H_k(\al^j(U))$ defined in Definition \ref{insiemone}. In the case this set contains some subspace of $E$, starting by one them and subtracting multiples of $\overline{\om}(U)$, we can repeat the process and, after changing names, we can finally assume to have a collection of subspaces $\al^1(U), \ldots, \al^{w_k}(U) \in A(U)$ such that for every $j$, $\alpha^j_k \equiv \delta^j_k \equiv j $ mod $w_k$ and $H_k(\al^j(U)) \subseteq A(U)$. Using this last condition, we can make a further change and assume $\al^j(U)= \min H_k(\al^j(U))$, which is well-defined by Proposition \ref{subspacelem}.\ref{subspacelem1} (see Figure \ref{fig:maintheoremb}).
%\begin{comment}
\begin{figure}[H]
\begin{subfigure}{.53\textwidth}
  \tdplotsetmaincoords{60}{120}
\begin{tikzpicture}[tdplot_main_coords, scale=1.3]

\filldraw[draw=white, fill=black!20, fill opacity=0.9]         
            (0,0,0)
            -- (3.5,0,0)
            -- (3.5,0,2)
            -- (0,0,2)
            -- cycle;

\draw[dashed,->] (-3.5,0,0) -- (3.5,0,0) node[anchor= east]{\scalebox{.7}{$1$}}; 
\draw[dashed,->] (0,0,0) -- (0,3.2,0) node[anchor= west]{\scalebox{.7}{$2$}};
\draw[dashed,->] (0,0,0) -- (0,0,3) node[anchor=south]{\scalebox{.7}{$3$}};
%\draw[dashed,thick] (0,0,2) -- (3.5,0,2); 
\draw node at (1.1,0,1) {\scalebox{1.0}{$\bs{\delta^{\prime}(U)}$}};
\draw node at (-2.8,0.1,1) {\scalebox{1.0}{$\bs{\delta^{j}(U)}$}};
\draw node at (-2,2.6,1.3) {\scalebox{1.0}{$\bs{\beta^{l}(U)}$}};
%\draw plot [mark=*, mark size=1] coordinates{(-2,0,2)};
\draw [fill=white] (-2,0,2) circle[radius= 0.2 em];
\draw [fill=white] (2,0,2) circle[radius= 0.2 em];
\draw [fill=white] (0,0,2) circle[radius= 0.2 em];
\draw [fill=white] (-2,2,2) circle[radius= 0.2 em];
\draw[] (0,0,0) -- (0,0,2); 
\draw[] (-2,0,0) -- (-2,0,2); 
\draw[] (-2,2,0) -- (-2,2,2); 
\draw[] (2,0,0) -- (2,0,2); 

\end{tikzpicture}

  \caption{}
  \label{fig:maintheorema}
\end{subfigure}%
\begin{subfigure}{.5\textwidth}
\begin{tikzpicture}[scale=0.8, font=\footnotesize]
	
	%Assi
	\draw[thick,->] (-1,0) -- (8,0) node[anchor= north]{$1$}; 
    \draw[thick,->] (0,-1) -- (0,8) node[anchor= east]{$2$};
	\draw [fill=white] (0,0) circle[radius= 0.5 em];
	\draw node at (0,0) {$3$};

	\draw[thick] (5,5) -- (5,8); 
	\draw node at (5.8,6.5) {\scalebox{1.20}{$\bs{\delta^{j}(V)}$}};
	
    \draw [fill=white] (5,4) circle[radius= 0.3 em];
	\draw node at (4.1,4) {\scalebox{1.20}{$\bs{\delta^{j}(U)}$}};
	\draw [fill=black] (6.5,4) circle[radius= 0.3 em];
	\draw node at (7.5,4) {\scalebox{1.20}{$\bs{\tau^{j}(U)}$}};
	
	\draw [fill=white] (4.5,3) circle[radius= 0.3 em];
	\draw [fill=black] (3.5,3) circle[radius= 0.3 em];
	\draw[dashed,thick,->] (4.5,3) -- (5,3.25); 
	\draw[dashed,thick,->] (5,3.25) -- (5.5,3.5); 
	\draw[dashed,thick,->] (5.5,3.5) -- (6,3.75); 
	\draw[dashed,thick,->] (6,3.75) -- (6.5,4); 
	
	\draw [fill=white] (2,2.25) circle[radius= 0.3 em];
	\draw [fill=black] (3,2.25) circle[radius= 0.3 em];
    \draw[dashed,thick,->] (2,2.25) -- (2.5,2.5); 
	\draw[dashed,thick,->] (2.5,2.5) -- (3,2.75); 
	\draw[dashed,thick,->] (3,2.75) -- (3.5,3); 
	
	\draw [fill=white] (1,1.25) circle[radius= 0.3 em];
	\draw node at (1,0.8) {\scalebox{1.20}{$\bs{\alpha^{j}(U)}$}};
    \draw[dashed,thick,->] (1,1.25) -- (1.5,1.5); 
	\draw[dashed,thick,->] (1.5,1.5) -- (2,1.75); 
	\draw[dashed,thick,->] (2,1.75) -- (2.5,2); 
    \draw[dashed,thick,->] (2.5,2) -- (3,2.25); 

    \draw (-1,1.25) -- (8,1.25); 
    \draw node at (6,0.8) {\scalebox{1.20}{$H_2(\bs{\alpha^{j}(U)})$}};
	\end{tikzpicture}
   \caption{}
   \label{fig:maintheoremb}
\end{subfigure}%
\caption{\footnotesize{\ref{fig:maintheorema}: we have $d=3$, $U=\{1,2\}$, $\de^j(U),\de^{\prime}(U),\be^l(U)$ are lines. \ref{fig:maintheoremb}: this is a perspective from "above" of the case $d=3$, $U=\{1,2\}$, $V=\{1\}$. In this case $\de^{\prime}(V)$ is a plane contained in $A$; $\de^j(U),\bs{\tau}^j(U),\al^j(U)$ are lines.}}
\label{fig:maintheorem}
\end{figure}
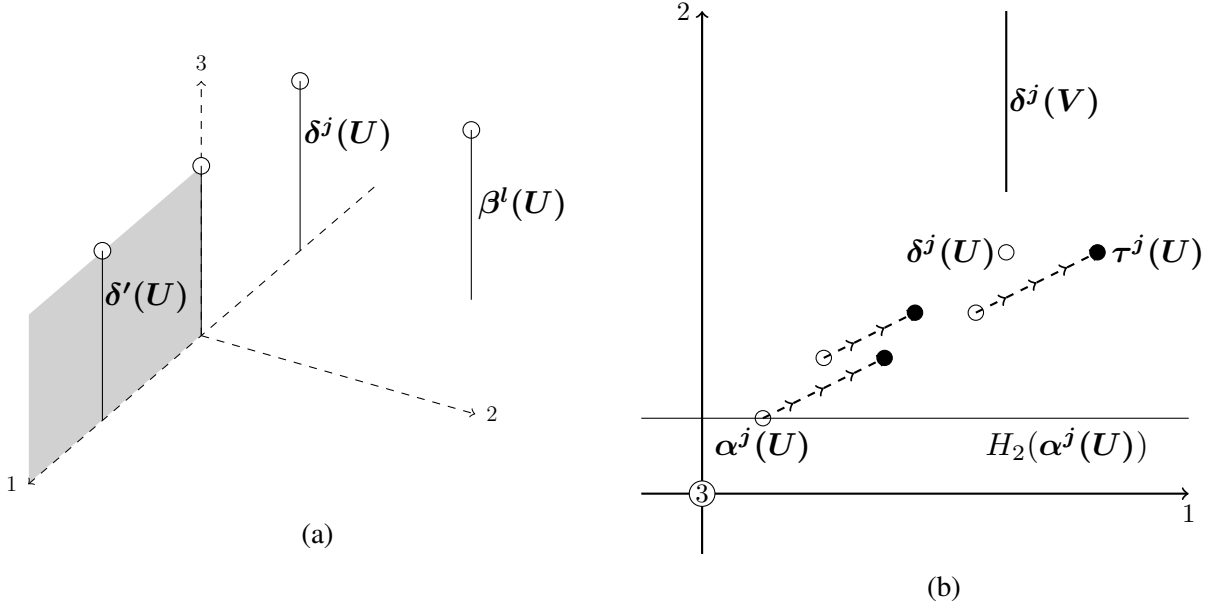
%\end{comment}
After this change, it follows that $\al^1(U), \ldots, \al^{w_k}(U)$ are totally ordered with respect to the standard order $\leq$ and, by relabeling the indexes, suppose $$\al^1(U) > \cdots > \al^{w_k}(U).$$
  
Now, notice that for every $j$, the level of $\al^j(U)$ has to be strictly lower than $D$ since $\de^j(U)$ has been chosen to be the minimal in $A_D$ having $k$-th component congruent to $j$ modulo $w_j$.
  
We prove that $\al^1(U), \ldots, \al^{w_k}(U) $ are pairwise contained in different levels. %Define the following order on $S$ based on the $k$-th component: $\al \leq_k \be$ if either $\al \leq \be$ or $\al, \be$ are incomparable with respect to the standard $\leq$ but $\alpha_k < \beta_k$.
%Since their $k$-th components are all different $\al^1, \ldots, \al^{w_k}$ are totally ordered with respect to $\leq_k$. By relabeling the indexes, suppose $\al^1(U) >_k \ldots >_k \al^{w_k}(U)$. Moreover, if $\al^j(U)$ and $\al^{j+1}(U)$ are incomparable with respect to the standard order $\leq$, set $\be = \al^j \wedge \al^{j+1} $, notice that $\beta_k = \al^{j+1}_k $, and since $H_k(\al^j) \subseteq A$, clearly $\be \in A$ and $H_k(\be) \subseteq A$. Hence, in this case we replace $\al^{j+1}$ by $\be$ to finally suppose  $\al^1(U) \geq \ldots \geq \al^{w_k}(U)$.
It is enough to show this for $\al^j(U)$ and $\al^{j+1}(U)$. % are in distinct levels. 
If $\al^j(U) \gg \al^{j+1}(U)$ this is clear, otherwise we must have $\al^j \in \ds{F}{\al^{j+1}}$ for some $F \subseteq U$ such that $k \not \in F$. Using Proposition \ref{propG2sub} as above together with the fact that $\td{k}{\al^{j+1}(U)} \subseteq H_k(\al^{j+1}(U)) \subseteq A(U)$, we can express $\al^{j+1}(U)$ as complete infimum of $\al^j(U)$ and other elements in $\td{k}{\al^{j+1}(U)}$, consecutive to $\al^{j+1}(U)$. By Theorem \ref{bianchienerisub}.\ref{bianchisub}, it follows that the level of $\al^{j+1}(U)$ has to be strictly smaller than the level of $\al^j(U)$. %If they are incomparable with respect to $\leq$, set $\be = \al^j \wedge \al^{j+1} $, clearly $\beta_k = \al^{j+1}_k $, and since $H_k(\al^j) \subseteq A$, then $\be \in A$ and $H_k(\be) \subseteq A$. Hence in this case we replace $\al^{j+1}$ by $\be$ and 
 %We want now to consider all the subspaces of the form $\al(U)$ (hence of dimension $d-k$) contained in some level $A_i$ with $i < D$.
%\medskip

Now we prove that if an $U$-subspace is contained in the level $A_i$ with $i<D$, then $i$ is larger or equal than the level of $\al^{w_k}(U)$.   
Let $\al(U)$ be any other $U$-subspace contained in $A_i$ for some $i < D$. By Proposition \ref{cambioelemento}, there exists $\al^{\prime}(U) \in A_i(U)$ such that $\alpha_k \leq \alpha^{\prime}_k$ and $H_k(\al^{\prime}(U)) \subseteq A(U)$. One necessarily must have $H_k(\al^{\prime}(U))= H_k(\al^{j}(U))$ for the $j$ such that $\alpha^{\prime}_k \equiv \alpha^j_k$ mod $w_j$, since if this was not true, by summing multiples of $\overline{\om}(U)$, we would have some subspace of $E$ contained in one set among $H_k(\al^{\prime}(U))$ and $ H_k(\al^{j}(U))$ and this is not possible. From this, since $$ \al^{w_k}(U) \leq \al^{j}(U) = \min H_k(\al^j(U)) \leq \al^{\prime}(U),$$
it follows that $i$ is not lower than the level of $\al^{w_k}$.

To describe all the possible levels of the $U$-subspaces it remains to show that for $j=1, \ldots, w_k$, we have $\al^j(U) \subseteq A_{D-j}$ and in particular $\al^{w_k}(U) \subseteq A_{D-w_k}= A_{N - s_k + 1}$. Use induction on $j$. For $j=1$, assume by way of contradiction that $\al^1(U) \in A_i(U)$ for some $i < D-1$. Hence, by Lemma \ref{prelem}.\ref{prelem1}, there exists $\be(U) \in A_{D-1}(U)$ such that $\be(U) \geq \al^1(U)$ and $\beta_k > \alpha^1_k$. By Proposition \ref{cambioelemento}, there exists $\be^{\prime}(U) \in A_{D-1}(U)$ such that $\beta_k \leq \beta^{\prime}_k$ and $H_k(\beta^{\prime}(U)) \subseteq A(U)$. By what said above, for some $l\geq 1$, $H_k(\be^{\prime}(U))= H_k(\al^{l}(U)) $, hence we must have $$ \alpha_k^l = \beta^{\prime}_k \geq \beta_k > \alpha^1_k $$ and this is a contradiction since $\al^1(U) \geq \al^l(U)$.

Similarly, for $j > 1$, assume $\al^j \in A_i$ for some $i < D-j$. Find as above $\be, \be^{\prime} \in A_{D-j}$ and $l$ such that $$ \alpha_k^l = \beta^{\prime}_k \geq \beta_k > \alpha^j_k. $$ If $l \geq j$ this is a contradiction since in this case $\al^j(U) \geq \al^l(U)$. Otherwise, if $l < j$, by inductive hypothesis $\al^l(U) \subseteq A_{D-l}$, but since $\al^l(U)= \min H_k(\al^l(U)) $, we get $\be^{\prime}(U) \geq \al^l(U)$ and therefore $D - j \geq D - l$, again a contradiction.

Hence, we proved that all $U$-subspaces of $A$ are contained in the levels $A_i$ with $i \geq N - s_k + 1$ and this lower bound is sharp.
 
%\textcolor{blue}{We explain that this lower bound holds also for all the other subspaces of $A$ of dimension $d-k$, that is the same dimension of $\al(U)$.
%Indeed, taking $l \in \lbrace k+1,\ldots, d\rbrace$, setting $T=V \cup \{ l\}$ and, applying the same argument of above to the subspaces of dimension $d-k$ of the form $\al(T)$, we get that they are all contained in levels $A_i$ for $i \geq D- w_l \geq D - w_k = N - s_k + 1$. 
%Furthermore, assume there exists some other set $V^{\prime}$ of cardinality $k-1$ such that some subspace of the form $\al(V^{\prime})$ is contained in the level $A_D$ (this can happen if $w_{k-1}=w_l$ for some $l < k-1$). In this case, by relabeling the indexes smaller than $k$ (with a permutation preserving the values of the components of $\om$) and using the same proof, we finally get that all the subspaces of dimension $d-k$ of $A$ are contained in levels $A_i$ for $i \geq D - w_k$. \bf spiegare bene \rm Moreover we have seen that each of such levels contains at least one subspace like that and this proves the claim.
%}
It remains to show that all the other subspaces of dimension $d-|U|$ are contained in the levels $A_i$ for $i\geq N-s_k+1$. This follows immediately observing that, if $T$ is a subset of $I$ of cardinality $|U|$, using the same proof seen above we can prove that $i\geq N-(\sum_{i\in T}w_i)+1 \geq N-s_k+1$.
We conclude the proof of the theorem observing that for $k=d$, the level $A_i$ contains a subspace of dimension zero if and only if $i\geq N-s_d+1$. But a subspace of dimension zero is a single element and hence, by definition of levels, is clear that $A_i$ contains a subspace of dimension zero if and only $i \geq 1$. This implies the thesis $N=s_d$. 
\end{proof}

%\begin{cor}
%Given a good semigroup $S$ with multiplicity $\bs{e}=(e_1,e_2,\ldots,e_n)$, then:
%\[N=\sum_{i=1}^de_i\]
%\end{cor}

\begin{ex}
\label{exap}
Consider the semigroup of Example \ref{ex3rami}, if we take $\bs{e}=(1,2,3)\in S$ and consider the ideal $E=S+\bs{e}$, then $A=\Ap(S,\bs{e})$. In this case $\bs{c_E}=\bs{c}+\bs{e}=(4,7,12)$ and $\bs{\gamma_E}=\bs{\gamma}+\bs{e}=(3,6,11)$.
\begin{eqnarray*}
A_1&=&\{(1,2,3)\}.\\
A_2&=&\{(1,2,6),(1,2,7),(2,3,3),(3,3,3)\}\cup\ (\infty,3,3).\\
A_3&=&\{(1,2,8),(2,3,6),(2,3,7),(2,4,3),(3,3,6),(3,3,7),(3,5,3),(3,6,3)\}\cup\\
&&\cup(\infty,3,6)\cup(\infty,3,7)\cup(\infty,5,3)\cup(\infty,6,3)\cup(3,\infty,3).\\
A_4&=&\{(3,5,11),(3,6,10)\}\cup(\infty,5,11)\cup(\infty,6,6)\cup(\infty,6,9)\cup(\infty,6,10)\cup\\ &&\cup(3,\infty,6)\cup(3,\infty,9)\cup(3,\infty,10)\cup(2,4,\infty)\cup(\infty,\infty,3).\\
A_5&=&\{(3,6,11)\}\cup(\infty,6,11)\cup(3,\infty,11)\cup(3,5,\infty)\cup(\infty,5,\infty)\cup(\infty,\infty,10).\\
A_6&=&(3,\infty,\infty)\cup(\infty,6,\infty)\cup(\infty,\infty,11).
\end{eqnarray*}
We notice that levels $A_6$, $A_5$, $A_4$ contain planes, level $A_3$, $A_2$ contain lines and only the level $A_1$ does not contain infinite subspaces.
\end{ex}

\section*{Acknowledgement}
The first author is supported by the NAWA Foundation grant Powroty "Applications of Lie algebras to Commutative Algebra".
The other two authors are funded by the project "Proprietà algebriche locali e globali di anelli associati a curve e ipersuperfici" PTR 2016-18 - Dipartimento di Matematica e Informatica - Università di Catania".
The authors wish to thank Marco D'Anna for the interesting and helpful discussions about the content of this article and to thank the referee for the careful revision and the comments that improved the quality of this article.

\end{document}